\documentclass[reqno,a4paper]{article}
 \usepackage{amsmath,amssymb,amsthm,amsfonts,epsfig,enumerate}
 \usepackage{mathtools}
 \usepackage{multicol}
 \usepackage[margin=0.99 in]{geometry}
 \usepackage{color}
 \usepackage{cmap,mathtools}
 \usepackage{enumerate}
 \usepackage{cite}
 \usepackage[utf8]{inputenc}
 \usepackage{fancyhdr}
\usepackage[parfill]{parskip}
\usepackage{graphicx}
\usepackage{booktabs}


 \usepackage{hyperref}
 \hypersetup{linkcolor=blue, colorlinks=true ,citecolor = red}

\newcommand{\dx}{{\mathrm{d}x}}
\newcommand{\dm}{{\mathrm{d}\mu}}
\newcommand{\dy}{{\mathrm{d}y}}
\newcommand{\dxy}{{\mathrm{d}x\mathrm{d}y}}
\newcommand{\D}{{\mathcal{D}}}
\usepackage{authblk}
 \usepackage{amsmath,amssymb,amsthm,amsfonts,epsfig,enumerate}
 \usepackage{mathtools}
 \usepackage[dvipsnames]{xcolor}
\newtheorem{definition}{Definition}[section]
\newtheorem{theorem}[definition]{Theorem}

\newtheorem{corollary}[definition]{Corollary}
\newtheorem{remark}[definition]{Remark}
\newtheorem{proposition}[definition]{Proposition}
\newtheorem{lem}[definition]{Lemma}
\newtheorem{lemma}[definition]{Lemma}

\numberwithin{equation}{section}

\DeclarePairedDelimiter\norm{\lVert}{\rVert}
\makeatletter

\newcommand{\Ga} {\Gamma}

\newcommand{\Om} {\Omega}

\newcommand{\noi} {\noindent}
\newcommand{\var} {\epsilon}
\newcommand{\ra} {\rightarrow}

\newcommand{\wra} {\rightharpoonup}
\newcommand{\wrastar} {\overset{\ast}{\rightharpoonup}}

\DeclareMathAlphabet{\mathpzc}{T1}{pzc}{m}{it}

\def\w{{\widetilde w}}

\def\dx{{\,\rm d}x}
\def\dt{{\rm d}t}

\def\cp{{\rm Cap}_{s,p}}

\def \tk {\textcolor{black}}

\def\sb2{{{\mathcal D}^{1,2}_0(B_1^c)}}

\def\w2r{{{ W}^{2,2}(\R^N)}}

\def\d2{{{\mathcal D}^{2,2}_0(\Om)}}

\def\C{{\mathcal C}}
\def\D{{\mathcal D}}

\def\R{{\mathbb R}}
\def\N{{\mathbb N}}
\def\F{{\mathcal F}}
\def\({{\Big(}}
\def\){{\Big)}}

\def\ws2{{\F_{\frac{N}{2}}}}
\def\L2{{ L^{1,\;\infty}(\log L)^2}}
\def\dx{{\rm d}x}

\def\dt{{\rm d}t}
\def\l2{\mathcal M\log L}

\def\c1Loc{{\C_{loc}^1}}

\providecommand{\keywords}[1]
{
  \small	
  \textbf{\textit{Keywords---}} #1
}
\providecommand{\msc}[1]
{
  \small	
  \textbf{\textit{Mathematics Subject Classification (2010)---}} #1
}
\title{Characterizations of compactness and weighted eigenvalue
problem {associated with} fractional Hardy-type inequalities}
 \author{Ujjal Das, Rohit Kumar and Abhishek Sarkar\thanks{Corresponding author.}}

\date{}
\usepackage[hyperpageref]{backref}

\begin{document}
 \maketitle
\begin{abstract}
In this article, we consider the following fractional {Hardy-type} inequality:
\begin{align} \label{Fractional Hardy_abst}
  \int_{\mathbb{R}^N} |w(x)||u(x)|^p  \mathrm{d}x \leq C  \int_{\mathbb{R}^N \times \mathbb{R}^N} \frac{|u(x)-u(y)|^p}{|x-y|^{N+sp}} \dxy:= \|u\|_{s,p}^p\,, \ \forall u \in \mathcal{D}^{s,p}(\mathbb{R}^N), 
\end{align} where $0<s<1<p<\frac{N}{s}$,  and $\D^{s,p}(\mathbb{R}^N)$ is the completion of $C_c^{\infty}(\mathbb{R}^N)$ with respect to the {norm} $\|\cdot\|_{s,p}$. We denote the space of all admissible {weight function} $w$ in \eqref{Fractional Hardy_abst} by  $\mathcal{H}_{s,p}(\mathbb{R}^N)$. Maz'ya-type characterization helps us to define a Banach function norm on $\mathcal{H}_{s,p}(\mathbb{R}^N)$.  Using the Banach function space structure and the concentration compactness type arguments, we provide several characterizations for the compactness of the map ${W}(u)= \int_{{\R^N}} |w| |u|^p \dx$ on $\mathcal{D}^{s,p}(\mathbb{R}^N)$. In particular, we prove that ${W}$ is compact on $\mathcal{D}^{s,p}(\mathbb{R}^N)$ if and only if $w \in \mathcal{H}_{s,p,0}(\mathbb{R}^N):=\overline{C_c(\mathbb{R}^N)} \ \mbox{in} \ \mathcal{H}_{s,p}(\mathbb{R}^N)$. Further, we study the following {weighted} eigenvalue problem: \begin{equation*} 
    (-\Delta_{p})^{s}u = \lambda w(x) |u|^{p-2}u ~~\text{in}~\mathbb{R}^{N},
\end{equation*}  where $(-\Delta_{p})^{s}$ is the fractional $p$-Laplace operator and $w = w_{1} - w_{2}~\text{with}~ w_{1},w_{2} \geq 0,$ is such that  $ w_{1} \in \mathcal{H}_{s,p,0}(\R^N)$ and $w_{2} \in L^1_{\mathrm{loc}}(\R^N)$.
 \end{abstract} 

\medskip
\msc{26D10, 31B15, 35A15, 35R11} 

\keywords{fractional Hardy inequality, compactness, variational methods, concentration-compactness, eigenvalue problem}
\maketitle

\section{Introduction}

For $p\in (1,N)$, the Beppo Levi space $\mathcal{D}^{1,p}(\R^N)$ is the completion of  $C_c^{\infty}(\R^N)$ with respect to the norm $ \norm{u}_{{1,p}} :=\left[ \int_{\R^N}|\nabla u|^p \dx \right]^ \frac{1}{p}$.  Let us first  recall the following classical Hardy inequality: 
  \begin{equation} \label{CHS}
  \int_{\R^N} \frac{1}{|x|^p} |u|^p\ \dx \leq \displaystyle \left(\frac{p}{N-p} \right)^p \int_{\R^N} |\nabla u|^p \, \dx, \ \forall u \in \mathcal{D}^{1,p}(\R^N) \,.
  \end{equation}
The one-dimensional Hardy inequality was proved by Hardy  \cite{Hardy} in 1920. 
\tk{There has been considerable interest in} identifying more general weight function $w \in L^1_{\mathrm{loc}}(\R^N)$ (instead of $\frac{1}{|x|^p}$) so that the following inequality holds
  \begin{equation} \label{WHS}
  \int_{\R^N} |w| |u|^p \dx \leq \displaystyle C \int_{\R^N} |\nabla u|^p\, \dx, \ \forall u \in \mathcal{D}^{1,p}(\R^N) 
  \end{equation}
 for some $C>0$. We denote the space of all admissible weight functions by $$\mathcal{H}_p(\R^N)=\{w\in L^1_{\mathrm{loc}}(\R^N): w \ \mbox{satisfies} \ \eqref{WHS}
 \}.$$ 
  One can use the Sobolev embedding to show that $L^{\frac{N}{p}}(\R^N) \subset \mathcal{H}_p(\R^N)$  \cite{Allegretto1995eigenvalues}.
 \tk{Further, using the Lorentz-Sobolev embedding, it can be shown that $L^{\frac{N}{p},\infty}(\R^N)\subset \mathcal{H}_p(\R^N)$  \cite[for $p=2$]{Visciglia}}.  Indeed, $L^{\frac{N}{p},\infty}(\R^N)$ does not exhaust $\mathcal{H}_p(\R^N)$, for instance, see \cite{Anoop2015weighted}. Further, we refer to \cite{Anoop2021compactness,anoop-p} for more nontrivial spaces contained in $\mathcal{H}_p(\R^N)$. {We refer to \cite{ABD} where authors provided various admissible spaces for a variant of the inequality \eqref{WHS} with different homogeneity on the left and right-hand side of the inequality. Extension of these results in Orlicz settings is recently obtained in \cite{ADR}.} 
 
 In this context, Maz'ya \cite[Theorem 8.5]{Mazya20002} gave a very intrinsic characterization of $\mathcal{H}_p(\R^N)$ using the $p$-capacity.
 \tk{Recall that the $p$-capacity of a compact set $F$ is defined as} 
  \[{\text{Cap}_p(F)}  = \inf \left\{ \displaystyle \int_{\Omega} | \nabla u |^p \dx: u \in  \mathcal{N}_p (F) \right \},\]
  where $ \mathcal{N}_p (F)= \{ u \in C^{\infty}_c(\R^N): u \geq 1 \ \mbox{on}\; F \}$. Maz'ya's characterization ensures that $w \in \mathcal{H}_p(\R^N)$ if and only if
   \begin{eqnarray*}
  \norm{w}_{\mathcal{H}_p}:= \sup\left\{ \frac{\int_{F} |w|\dx}{\text{Cap}_p(F)}: F \subset \R^N \ \mbox{is compact}; |F|\ne 0 \right\}<\infty.          \end{eqnarray*}
In this view, $\mathcal{H}_p(\R^N)$ is identified as $\mathcal{H}_p(\R^N)=\{w\in L^1_{\mathrm{loc}}(\R^N): \norm{w}_{\mathcal{H}_p}<\infty \}$. Indeed,  $ \norm{\cdot}_{\mathcal{H}_p}$ is a Banach function space norm on $ \mathcal{H}_p(\R^N)$ \cite{Anoop2021compactness}.  Next, one may look for $w \in \mathcal{H}_p(\R^N)$ for which the best constant in \eqref{WHS} is attained in $\mathcal{D}^{1,p}(\R^N)$. 
 Let $\mathcal{B}_{p}(w)$ be the best constant in \eqref{WHS} i.e., $\mathcal{B}_{p}(w)$ is the least possible constant so that \eqref{WHS} holds. Therefore, for $w \in \mathcal{H}_p(\R^N)$, we have
 \begin{equation} \label{bestHardy}
\mathcal{B}_{p}(w)^{-1}=\inf \left\{\int_{{\R^N}} |\nabla u|^p \dx : u \in \mathcal{D}^{1,p}({\R^N}), \int_{\R^N} |w| |u|^p \dx=1 \right\} \,.
\end{equation} 
Thus the best constant $\mathcal{B}_{p}(w)$ is attained in $\mathcal{D}^{1,p}({\R^N})$ if and only if \eqref{bestHardy} admits a minimizer.
 One of the simplest conditions that guaranties the existence of a minimizer for \eqref{bestHardy} is the compactness of the map  $$W(u)= \int_{\R^N} |w||u|^p  \dx $$ on $\mathcal{D}^{1,p}(\R^N)$ (i.e., for $u_n \wra u$ in $\mathcal{D}^{1,p}(\R^N)$, $W(u_n) \rightarrow W(u)$ \tk{in $\R$} as $n \ra \infty$). Many authors have given various sufficient conditions for the compactness of the map $W$. For example, Visciglia \cite{Visciglia} proved the compactness of $W$ for $w\in L^{\frac{N}{p},d}(\R^N)$ with $d<\infty$, which is later extended for $w\in \overline{{C}_c^{\infty}(\R^N)}$ in $L^{\frac{N}{p},\infty}(\R^N)$ \cite{anoop-p}. Furthermore, in \cite{Anoop2021compactness}, authors have identified the optimal space for the compactness of $W$, which is precisely $ \overline{{C}_c^{\infty}(\R^N)}$ in $\mathcal{H}_p(\R^N).$

 In this article, we are interested in the non-local {analogue} of \eqref{WHS}, namely, the fractional Hardy-{type} inequality:
\begin{align} \label{Fractional Hardy}
  \int_{\R^N} |w(x)||u(x)|^p  \dx \leq C  \int_{{\R^N \times \R^N}} \frac{|u(x)-u(y)|^p}{|x-y|^{N+sp}} \dxy:= { \norm{u}_{s,p}^p}\,, \ \forall u \in \mathcal{D}^{s,p}(\R^N), 
\end{align} where $0<s<1<p<\frac{N}{s}$, 
and $\D^{s,p}(\R^N)$ is the completion of $C^{\infty}_c(\R^N)$ with respect to the {norm} $\|\cdot\|_{s,p}$.
\begin{definition}[$(s,p)$-Hardy Potential]
A function $w \in L^1_{\mathrm{loc}}(\R^N)$ is called a $(s,p)$-Hardy potential if $w$ satisfies \eqref{Fractional Hardy}. We denote the space of $(s,p)$-Hardy potentials by $\mathcal{H}_{s,p}(\R^N)$.
\end{definition}
We know that the homogeneous weight function $w(x)=\displaystyle\frac{1}{|x|^{sp}}$, belongs to $\mathcal{H}_{s,p}(\R^N)$, see \cite{RS2008}. 
As in the local case, due to the fractional Sobolev inequality \cite[Theorem 6.5]{DNPV2012}, we have $L^r(\R^N) \subset \mathcal{H}_{s,p}(\R^N)$ for $r = \frac{N}{sp}$.
In fact, similar to the local case (i.e., $s=1$), we can also characterize the space $\mathcal{H}_{s,p}(\R^N)$  using the $(s, p)$-capacities, which is defined as follows:
\begin{definition}[$(s,p)$-Capacity]\rm
 For a \tk{compact set} $F $,
 we define
\begin{align*}
    {\rm{Cap}}_{s,p}(F)=\inf \big\{\norm{u}_{s,p}^p: u \in \mathcal{N}_{s,p}(F)\big\} \,,
\end{align*}
where $\mathcal{N}_{s,p}(F):= \{ u \in C^{\infty}_c(\R^N): u \geq 1 \text{ on } F\}.$ 
One may assume that $u =1$ on $F$ and $0 \leq u \leq 1$ in $\R^N$ (see \cite[Theorem 2.1]{Xiao}). 
\end{definition}

Motivated by the local case (i.e., $s=1$), for $w \in L^1_{\mathrm{loc}}(\R^N)$, we define
\begin{align}
 \norm{w}_{\mathcal{H}_{s,p}} =  \sup \left\{ \frac{\int_{F} |w(x)|\, \dx}{\mbox{Cap}_{s,p}(F)}: \tk{F \ \mbox{is compact}, |F|\neq 0} \right \}.
\end{align} Observe that, if $w$ satisfies \eqref{Fractional Hardy}, then for any \tk{compact set} $F$ and $u \in \mathcal{N}_{s,p}(F)$, we have
$$\int_{F} |w(x)| \dx \leq \int_{\R^N} |w(x)| |u(x)|^p \dx  \leq C \norm{u}_{s,p}^p.$$
This implies $\int_{F} |w(x)| \dx   \leq C \mathrm{Cap}_{s,p}(F).$ Therefore, $w$ necessarily satisfies $\norm{w}_{\mathcal{H}_{s,p}} < \infty$.
In fact, this condition is also sufficient for $w$ to satisfy \eqref{Fractional Hardy}, see Theorem \ref{H1}. 
Therefore, the space of $(s,p)$-Hardy potentials can be identified as
$$\mathcal{H}_{s,p}(\R^N)=\big\{w \in L^1_{\mathrm{loc}}(\R^N): \norm{w}_{\mathcal{H}_{s,p}} < \infty \big\} \,.$$
Indeed, $\|\cdot\|_{\mathcal{H}_{s,p}}$ is a Banach function norm on  $\mathcal{H}_{s,p}(\R^N)$ (for more details we refer to \cite[Section 30, Chapter 6]{Zaanen1958}). {For recent generalizations of Maz'ya-type characterization in other settings, we refer to \cite{DP, DKP, Hou2024}.}

{Now}, let $\mathcal{B}_{s,p}(w)$ be the best constant in \eqref{Fractional Hardy} i.e., $\mathcal{B}_{s,p}(w)$ is the least possible constant so that \eqref{Fractional Hardy} holds. Therefore, for $w \in \mathcal{H}_{s,p}(\R^N)$, we have
 \begin{equation} \label{bestHardy1}
\mathcal{B}_{s,p}(w)^{-1}=\inf \left\{ \|u\|_{{s,p}}^p : u \in \mathcal{D}^{s,p}({\R^N}), \int_{\R^N} |w| |u|^p \dx=1 \right\}.
\end{equation}
Similar to the local case, the compactness of the map $W$ on $\mathcal{D}^{s,p}({\R^N})$ ensures that the best constant $\mathcal{B}_{s,p}(w)$ is attained in $\mathcal{D}^{s,p}({\R^N})$.
\tk{For $0<s<1<p<N/s$, $W$ is compact on $\mathcal{D}^{s,p}({\R^N})$ if $w \in L^{\frac{N}{sp}}(\R^N)$ \cite{Pezzo2020}. For similar compactness results of $W$ on bounded domains, we refer to \cite{Iannizzotto2021monotonicity}.}
Let us now define the following closed subspace of $\mathcal{H}_{s,p}(\R^N)$:
$$\mathcal{H}_{s,p,0}(\R^N)=\overline{C_c(\R^N)} \ \mbox{in} \ \mathcal{H}_{s,p}(\R^N).$$
For $w \in \mathcal{H}_{s,p}(\R^N)$ and $x \in \R^N$, we define
\begin{align}\label{cw}
    \mathcal{C}_w(x):= \lim_{r \to 0} \|w \chi_{B_r(x)}\|_{\mathcal{H}_{s,p}},\ \mathcal{C}_w(\infty):=\lim_{r \to \infty} \|w \chi_{B_r(0)^c} \|_{\mathcal{H}_{s,p}} \text{ and } \mathcal{C}_w^*:=\sup_{x\in \R^N} \mathcal{C}_w(x) \,,
\end{align}
where $B_r(x)$ be the open ball of radius $r$ centered at $x$  and $\chi_A$ denotes the characteristic function of a subset $A$, i.e., $\chi_A(x)=1$ if $x \in A$ and it vanishes otherwise.
Our first main result proves the following equivalent characterizations for the compactness of $W$ on $\mathcal{D}^{s,p}(\mathbb{R}^N)$.
 \begin{theorem}
 \label{allinone1}
  Let {$0<s<1<p<\frac{N}{s}$ and } $w \in \mathcal{H}_{s,p}(\R^N)$. Then, the following statements are equivalent:
  \begin{enumerate}
   \item[(i)] The map $W:\mathcal{D}^{s,p}(\mathbb{R}^N) \to \R $, defined as $W(u)=\int_{\R^N} |w| |u|^p\, \dx$, is compact,
   \item[(ii)] $w$ has absolute continuous norm in $\mathcal{H}_{s,p}(\R^N)$, i.e., for any sequence of open sets $G_{n+1} \subset G_{n}$ for $n=1,2,\cdots$ and  $\displaystyle\bigcap_{n=1}^{\infty}G_n =\emptyset$, the norms $\|w\chi_{G_n}\|_{\mathcal{H}_{s,p}} \to 0$ as $n \to \infty$,
   \item[(iii)] $w \in \mathcal{H}_{s,p,0}(\R^N)$,
   \item[(iv)] $\C^*_w =0= \C_w(\infty)$.
  \end{enumerate}
  \end{theorem}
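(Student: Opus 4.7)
}
The four statements split naturally into two groups: (ii), (iii), (iv) are intrinsic properties of $w$ inside the Banach function space $\mathcal{H}_{s,p}(\R^N)$, while (i) concerns the action of $w$ on $\mathcal{D}^{s,p}(\R^N)$. I would prove (iii)$\Leftrightarrow$(ii)$\Leftrightarrow$(iv) by Banach function space techniques, and then close the loop with (iii)$\Rightarrow$(i)$\Rightarrow$(iv).

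First, for (iii)$\Leftrightarrow$(ii), I would invoke the general fact that in a Banach function space the set of elements with absolutely continuous norm is a closed subspace. Every $w\in C_c(\R^N)$ has absolutely continuous norm: if $G_n\searrow\emptyset$, then eventually $G_n$ misses $\mathrm{supp}(w)$, so $w\chi_{G_n}\equiv 0$. This yields (iii)$\Rightarrow$(ii). For the converse, given $w$ with absolutely continuous norm, truncate and mollify: $w\chi_{B_R(0)\setminus B_\rho(0)}$ approximates $w$ by (ii) applied to $G_n=B_{\rho_n}(0)\cup B_{R_n}^c$, and then a standard mollification/Lusin-type argument lets one replace the truncation by a $C_c(\R^N)$ function, giving (ii)$\Rightarrow$(iii). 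For (ii)$\Leftrightarrow$(iv), the direction (ii)$\Rightarrow$(iv) is immediate since $\bigcap_n B_{1/n}(x)=\emptyset$ (singletons have zero $(s,p)$-capacity and can be excised) and $\bigcap_n (B_n(0))^c=\emptyset$. Conversely, given $G_n\searrow\emptyset$ and $\varepsilon>0$, the hypothesis $\mathcal{C}_w^*=0=\mathcal{C}_w(\infty)$ lets one pick a large ball $B_R(0)$ with $\|w\chi_{B_R^c}\|_{\mathcal{H}_{s,p}(\R^N)}<\varepsilon$ and finitely many small balls $B_{r_i}(x_i)$ whose union covers $\overline{B_R(0)}$ with each $\|w\chi_{B_{r_i}(x_i)}\|_{\mathcal{H}_{s,p}(\R^N)}<\varepsilon$; subadditivity of the Banach function norm plus the fact that $G_n$ eventually misses a positive-measure portion of each fixed piece yields $\|w\chi_{G_n}\|_{\mathcal{H}_{s,p}(\R^N)}\to 0$.

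For (iii)$\Rightarrow$(i), I would approximate: given $\varepsilon>0$, write $w=w_\varepsilon+r_\varepsilon$ with $w_\varepsilon\in C_c(\R^N)$ and $\|r_\varepsilon\|_{\mathcal{H}_{s,p}(\R^N)}<\varepsilon$. For $u_n\wra u$ in $\mathcal{D}^{s,p}(\R^N)$, the sequence is bounded, so by the Maz'ya-type inequality $|W_{r_\varepsilon}(u_n)-W_{r_\varepsilon}(u)|\le C\varepsilon$ uniformly in $n$. On the other hand, $W_{w_\varepsilon}$ is compact because $\mathrm{supp}(w_\varepsilon)$ is compact and the fractional Rellich-Kondrachov theorem gives $u_n\to u$ in $L^p(\mathrm{supp}(w_\varepsilon))$; combined with the $L^\infty$ bound on $w_\varepsilon$ this yields $W_{w_\varepsilon}(u_n)\to W_{w_\varepsilon}(u)$. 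Sending $\varepsilon\to 0$ produces (i).

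The main obstacle is (i)$\Rightarrow$(iv), which I would attack by a concentration-compactness contradiction. Suppose $\mathcal{C}_w(x_0)=2\eta>0$ for some $x_0\in\R^N$. For each $n$ pick $r_n\downarrow 0$ and $F_n\Subset B_{r_n}(x_0)$ with
\begin{equation*}
\int_{F_n}|w|\,\dx\ge \eta\,\mathrm{Cap}_{s,p}(F_n).
\end{equation*}
Choose near-optimal capacity minimizers $\phi_n\in\mathcal{N}_{s,p}(F_n)$, normalized to $0\le\phi_n\le 1$ with $\phi_n=1$ a.e.\ on $F_n$ and $\|\phi_n\|_{s,p}^p\le 2\,\mathrm{Cap}_{s,p}(F_n)$; set $v_n=\phi_n/\|\phi_n\|_{s,p}$. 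Then $\|v_n\|_{s,p}=1$ and, because the supports shrink to $\{x_0\}$, testing against $C_c^1(\R^N\setminus\{x_0\})$ (dense in the dual direction via standard capacity arguments) forces $v_n\wra 0$. However $W(v_n)\ge \eta/2>0$, contradicting the compactness of $W$. For $\mathcal{C}_w(\infty)>0$ I would run the same construction with $F_n\Subset\R^N\setminus B_n(0)$, noting that by translation invariance of $\mathcal{D}^{s,p}(\R^N)$ the resulting normalized sequence escapes to infinity and therefore converges weakly to zero. The delicate point is verifying $v_n\wra 0$ rigorously; I expect this to follow from the concentration-compactness dichotomy for fractional Sobolev spaces together with the fact that $\|v_n\|_{s,p}$ remains bounded and $v_n$ is supported in an arbitrarily small neighbourhood of $x_0$ (respectively, outside arbitrarily large balls).
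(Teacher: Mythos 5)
Your overall structure (prove (ii)$\Leftrightarrow$(iii)$\Leftrightarrow$(iv) intrinsically, then close the loop with (iii)$\Rightarrow$(i)$\Rightarrow$(iv)) is genuinely different from the paper, which runs a single cycle (i)$\Rightarrow$(ii)$\Rightarrow$(iii)$\Rightarrow$(iv)$\Rightarrow$(i). Your (iii)$\Rightarrow$(i) by writing $w=w_\varepsilon+r_\varepsilon$ with $w_\varepsilon\in C_c$ and $\|r_\varepsilon\|_{\mathcal{H}_{s,p}}<\varepsilon$, controlling the remainder by the Maz'ya-type inequality and the compactly supported part by local Rellich, is a clean direct argument that bypasses the concentration--compactness lemmas the paper uses for (iv)$\Rightarrow$(i), and your (i)$\Rightarrow$(iv) by concentrating near-optimal capacity competitors at a point of positive $\mathcal{C}_w$ (or at infinity) is essentially the same mechanism as the paper's Lemma~\ref{cpct}. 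So far so good, and these two directions are defensible.

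The genuine gap is (iv)$\Rightarrow$(ii), and it is load-bearing: without it you have (ii)$\Leftrightarrow$(iii)$\Rightarrow$(i)$\Rightarrow$(iv) and (ii)$\Rightarrow$(iv), but nothing returns from (iv) or (i) to (ii)/(iii), so the four statements are not shown equivalent. Your sketch covers $\overline{B_R}$ by finitely many balls $B_{r_i}(x_i)$ with $\|w\chi_{B_{r_i}(x_i)}\|_{\mathcal{H}_{s,p}}<\varepsilon$ and invokes finite subadditivity, but this only gives $\|w\chi_{G_n}\|_{\mathcal{H}_{s,p}}\leq (M+1)\varepsilon$ uniformly in $n$, where $M=M(\varepsilon)$ is the number of balls; since $M(\varepsilon)$ typically blows up as $\varepsilon\to 0$, this bound does not tend to $0$. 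The hand-wave ``$G_n$ eventually misses a positive-measure portion of each fixed piece'' does not rescue this: a decreasing sequence of open sets with empty intersection need not eventually miss a fixed compact set (take $G_n=\{1-\tfrac1n<|x|<1\}$), and even when $G_n\cap B_{r_i}$ shrinks one only has $\|w\chi_{G_n\cap B_{r_i}}\|\leq \|w\chi_{B_{r_i}}\|<\varepsilon$, which is exactly the non-improving bound. This is precisely why the paper does not prove (iv)$\Rightarrow$(ii) directly and instead routes through the compactness of $W$: (iv)$\Rightarrow$(i) via the concentration-compactness Lemma~\ref{mlc2}, then (i)$\Rightarrow$(ii) via Lemma~\ref{cpct}. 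To repair your plan you should either adopt that detour, or prove (i)$\Rightarrow$(ii) directly (you already have the right tools from your (i)$\Rightarrow$(iv) construction; essentially you need the analogue of Lemma~\ref{cpct}).

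Two smaller issues worth fixing. In (iii)$\Rightarrow$(ii), your justification that $C_c$ functions have absolutely continuous norm (``eventually $G_n$ misses $\mathrm{supp}(w)$'') is false for the same reason as above; the correct argument is that $w\in C_c\subset L^{N/(sp)}$, $\chi_{G_n}\to 0$ pointwise, so $\|w\chi_{G_n}\|_{L^{N/(sp)}}\to 0$ by dominated convergence, and then use the embedding $L^{N/(sp)}\hookrightarrow\mathcal{H}_{s,p}$. In (i)$\Rightarrow$(iv), to conclude $v_n\rightharpoonup 0$ you need the near-minimizers $\phi_n$ to have supports shrinking to $\{x_0\}$ (resp.\ escaping to infinity); this requires the capacity localization of Proposition~\ref{propofcap}, which costs a multiplicative constant but is harmless. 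Testing against $C_c^1(\R^N\setminus\{x_0\})$ alone is not enough without this localization.
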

  
  Next, we are interested in studying the following fractional $p$-Laplace weighted eigenvalue problem: \begin{equation} \label{Weighetd eigenvalue problem}
    (-\Delta_{p})^{s}u = \lambda w(x) |u|^{p-2}u ~~\text{in}~\mathbb{R}^{N},
\end{equation}  where $0<s<1<p<\frac{N}{s}$ and $(-\Delta_{p})^{s}$ is the fractional $p$-Laplace operator defined on smooth functions as
\begin{align*}
    (-\Delta_{p})^{s}u(x) = 2 \lim_{\epsilon \rightarrow 0^{+}} \int_{\mathbb{R}^{N} \backslash B_{\epsilon}(x)} \frac{|u(x) - u(y)|^{p-2}(u(x)-u(y))}{|x-y|^{N+sp}}\,\mathrm{d}y\,\quad\text{for}~x \in \mathbb{R}^{N} \,,
\end{align*}
and the weight function $w = w_{1} - w_{2}~\text{with}~ w_{1},w_{2} \geq 0,$ is such that  $ w_{1} \in \mathcal{H}_{s,p,0}(\R^N)$ and $w_{2} \in L^{1}_{\mathrm{loc}}(\R^N)$.
If the weighted eigenvalue problem \eqref{Weighetd eigenvalue problem} has a non-trivial solution for some $\lambda \in \R$ 
i.e., there exists  $u \in \D^{s,p}(\mathbb{R}^{N}) \backslash \{0\}$ such that the following Euler-Lagrange equation 
\begin{equation} \label{Lagrange equation}
     \int_{\mathbb{R}^{N} \times \mathbb{R}^{N}} \dfrac{|u(x)-u(y)|^{p-2} (u(x)-u(y)) (v(x) -v(y))}{ |x-y|^{N+sp}} \dxy = \lambda \int_{\mathbb{R}^{N}} w|u|^{p-2}uv\, \dx,
\end{equation}
holds for all $v \in \D^{s,p}(\mathbb{R}^{N})$, then the scalar $\lambda \in \R$ is known to be the eigenvalue of \eqref{Weighetd eigenvalue problem}. The function $u (\neq 0)$ satisfying \eqref{Lagrange equation} is known as the eigenfunction corresponding to the eigenvalue $\lambda$. The first eigenvalue is the least possible eigenvalue defined by $\lambda_{1}:= \inf\{\|u\|_{s,p}^{p}: u \in \D^{s,p}(\R^N),\   \int_{\mathbb{R}^{N}} w|u|^{p}\,\dx =1\}$ and the corresponding eigenfunction is known as the first eigenfunction. An eigenvalue $\lambda$ is called principal if at least one of the eigenfunctions associated with it is of constant sign. If the eigenfunctions associated with the eigenvalue $\lambda$ are unique up to some constant multiple, then $\lambda$ is known as a simple eigenvalue.
If we consider the weighted eigenvalue problem \eqref{Weighetd eigenvalue problem} in a bounded domain $\Omega \subset \R^N$ with homogeneous Dirichlet boundary conditions i.e.,
\begin{align}\label{Weighetd eigenvalue problem bounded domain}
    \begin{cases}
    &(-\Delta_{p})^{s}u = \lambda w(x) |u|^{p-2}u \quad\text{in }\Omega,\\
    &u = 0 \quad\text{in }\mathbb{R}^N \setminus \Omega,
    \end{cases}
\end{align} 
then the existence, simplicity, and the principal {nature} of eigenvalues of \eqref{Weighetd eigenvalue problem bounded domain} have been discussed extensively in the literature. 
For $p=2, sp<N$ and $w = 1$, Servadei and Valdinoci \cite{Servadei2013} proved the simplicity and principality of the first eigenvalue $\lambda_1$ and established the existence of infinitely many eigenvalues to the problem \eqref{Weighetd eigenvalue problem bounded domain}, i.e., $ 0< \lambda_{1} < \lambda_{2} \leq \lambda_{3} \leq ... \leq \lambda_{k} \leq \cdots, ~\lambda_{k} \rightarrow \infty \text{ as }k \rightarrow \infty$.
For general $p\in (1,\infty)$ and $w=1$, the eigenvalue problem \eqref{Weighetd eigenvalue problem bounded domain} was first studied by Lindgren and Lindqvist \cite{Lindgren2014} (for the case $p \geq 2$) and later by Franzina and Palatucci \cite{Franzina2013} (for any
$p>1$).
For non-constant $w$, Pucci and Saldi \cite{Pucci2015eigenvalue} obtained the existence of a positive first eigenvalue of \eqref{Weighetd eigenvalue problem bounded domain} when $w \in L^{\alpha}(\Omega)$ is positive with $\alpha > \frac{N}{sp}$ and we refer to \cite{Iannizzotto2021monotonicity} for $\alpha = \frac{N}{sp}$. The author in \cite{Iannizzotto2021monotonicity} proved the existence of infinitely many eigenvalues with the first eigenvalue being simple, isolated and principal.
We also refer the articles \cite{Ho-Kanishka-Sim-Squassina-2017,Ho-Sim-2019} where more general weights were considered.
For the local case (i.e., $s=1$),  { we refer to \cite{anoop-p} and the references therein. The author in \cite{anoop-p} studied the problem \eqref{Weighetd eigenvalue problem bounded domain} for a general domain $\Omega$ (bounded or unbounded)} and showed the existence, simplicity, and uniqueness of the first eigenvalue of \eqref{Weighetd eigenvalue problem bounded domain}. Moreover, he obtained the existence of a sequence of infinite eigenvalues.  {For the fractional case, in contrast to the weighted eigenvalue problem \eqref{Weighetd eigenvalue problem bounded domain} which is posed on bounded domain, the literature dealing with problem \eqref{Weighetd eigenvalue problem} i.e., when $\Omega =\R^N$ in \eqref{Weighetd eigenvalue problem bounded domain}, is not very rich. In this direction,}
Pezzo and Quaas \cite[Theorem 1.1, Theorem 1.2]{Pezzo2020} considered the problem \eqref{Weighetd eigenvalue problem bounded domain} in two different cases $sp<N$ and $sp\geq N$. For $sp<N$, they assumed a sign changing $w \in L^{\frac{N}{sp}}(\mathbb{R}^{N})\cap L^{\infty}(\mathbb{R}^{N})$ with $w_1 \not\equiv 0$. On the other hand for $sp \geq N$, they considered $w \in L^{\infty}(\R^N)$, $w = w_1 -w_2$ such that: $(a) \ w_1(x) \geq 0 \text{ a.e. in } \R^N, w_1 \in L^{\frac{N}{sp}}(\mathbb{R}^{N})\cap L^{\infty}(\mathbb{R}^{N})$ and $(b) \ w_2(x) \geq \epsilon>0 \ \text{a.e. in }\R^N.$ In both the cases, the authors obtained the existence of infinite eigenvalues with the first eigenvalue being simple and principal. For $sp<N$, Cui and Sun \cite{Cui} recently obtained  { similar results} for eigenvalues as in \cite[Theorem 1.1]{Pezzo2020} by considering $w=w_1 - w_2, \ w_1,w_2 \geq 0$, such that $w_1\in L^{\frac{N}{sp}}(\mathbb{R}^{N}) \cap L^{\infty}(\mathbb{R}^{N}),w_2 \in L^{\infty}(\mathbb{R}^{N})$ and $w_1 \not\equiv 0$.
In our article, we generalize these result considering $w=w_1-w_2$ with  $ 0 \leq w_{1} \in \mathcal{H}_{s,p,0}(\R^N)$ and $0\leq w_{2} \in L^{1}_{\mathrm{loc}}(\R^N)$.
\begin{theorem} \label{Infinite eigenvalue}
 {Let $0<s<1<p<\frac{N}{s}$.} Assume that $w_{1} \in \mathcal{H}_{s,p,0}(\mathbb{R}^{N})$ and $w_{2} \in L^{1}_{\mathrm{loc}}(\R^N)$ with $w_{1} \not\equiv 0$, then there exists a sequence of eigenvalues $\{ \lambda_{k} \}$ for the problem (\ref{Weighetd eigenvalue problem}) such that $$ 0< \lambda_{1} < \lambda_{2} \leq \lambda_{3} \leq ... \leq \lambda_{k} \leq \cdots, \quad \lambda_{k} \rightarrow \infty \quad\text{as} ~~k \rightarrow \infty.$$ The first eigenvalue $\lambda_{1}$ is simple and principal. 
\end{theorem}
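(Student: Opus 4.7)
The entire argument hinges on Theorem \ref{allinone1}: because $w_1 \in \mathcal{H}_{s,p,0}(\R^N)$, the map $W_1(u)=\int_{\R^N} w_1|u|^p\dx$ is weakly continuous (i.e.\ compact) on $\D^{s,p}(\R^N)$, while the $w_2$-part only appears with a favourable sign (it is subtracted) so it can be handled by Fatou's lemma. I would then combine a direct minimisation for $\lambda_1$, a fractional Picone/convexity argument for simplicity, and a Ljusternik--Schnirelmann (genus) scheme for the infinitely many higher eigenvalues.

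\textbf{Existence, positivity and principality of $\lambda_1$.} Define
\[
\lambda_1 \;=\; \inf\Bigl\{\|u\|_{s,p}^p : u\in \D^{s,p}(\R^N),\; \int_{\R^N} w|u|^p\dx = 1\Bigr\}.
\]
The admissible set is non-empty since $w_1\not\equiv 0$: pick any smooth bump supported where $w_1>0$ and rescale. Also $\lambda_1>0$ because $\int w|u|^p \le \int w_1|u|^p \le \mathcal{B}_{s,p}(w_1)\|u\|_{s,p}^p$. For a minimising sequence $u_n\rightharpoonup u$ in $\D^{s,p}(\R^N)$ (bounded in norm), Theorem \ref{allinone1} gives $\int w_1|u_n|^p \to \int w_1|u|^p$, so the identity $\int w_2|u_n|^p = \int w_1|u_n|^p - 1$ forces $\int w_2|u_n|^p$ to converge; Fatou applied to $w_2\ge 0$ and a.e.\ convergence then yields $\int w|u|^p \ge 1$, after which a homogeneity/rescaling argument combined with weak lower semicontinuity of $\|\cdot\|_{s,p}$ shows $u$ attains $\lambda_1$. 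Replacing $u$ by $|u|$ (using $\bigl||u(x)|-|u(y)|\bigr|\le |u(x)-u(y)|$) gives a non-negative minimiser, and a fractional strong minimum principle (for instance, Brasco--Parini--Squassina or Del Pezzo--Quaas type) upgrades this to $u>0$ a.e.; hence $\lambda_1$ is principal.

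\textbf{Simplicity.} For the uniqueness of the positive eigenfunction up to scalar multiples, I would invoke the nonlocal Picone/hidden convexity inequality established in the literature (e.g.\ Brasco--Franzina, Amghibech): for $u,v\ge 0$ one has
\[
|(u-v)(x)-(u-v)(y)|^{p-2}\bigl((u-v)(x)-(u-v)(y)\bigr)\cdot\bigl(\phi(x)-\phi(y)\bigr)
\]
type identities, or equivalently the strict convexity of $t\mapsto \|t^{1/p}\|_{s,p}^p$ along the ray of $p$-th powers. Given two positive eigenfunctions $u_1,u_2$ with eigenvalue $\lambda_1$, testing the Euler--Lagrange equation \eqref{Lagrange equation} with $\phi=(u_1^p-u_2^p)/u_1^{p-1}$ and its symmetric counterpart, and subtracting, yields that $u_1/u_2$ is constant a.e.

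\textbf{Infinitely many eigenvalues and the main obstacle.} For the sequence $\{\lambda_k\}$ I would set up the constrained variational problem on the $C^1$-manifold $\mathcal{M}=\{u\in\D^{s,p}(\R^N):\int w|u|^p\dx=1\}$ and define
\[
\lambda_k \;=\; \inf_{A\in\Sigma_k}\,\sup_{u\in A}\,\|u\|_{s,p}^p,
\]
where $\Sigma_k$ is the family of symmetric compact subsets of $\mathcal{M}$ with Krasnoselskii genus at least $k$. Standard LS theory then yields a non-decreasing sequence of critical values $\lambda_k\to\infty$ provided the restricted functional $u\mapsto\|u\|_{s,p}^p$ satisfies the Palais--Smale condition on $\mathcal{M}$. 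This Palais--Smale verification is the main obstacle: for a $(PS)_c$ sequence $\{u_n\}\subset\mathcal{M}$ I have to upgrade the weak limit $u_n\rightharpoonup u$ to strong convergence in $\D^{s,p}(\R^N)$, and this is where all the non-locality and the unboundedness of $\R^N$ intervene. The compactness of $W_1$ furnished by Theorem \ref{allinone1} handles the $w_1$-term, Fatou again controls the $w_2$-term, and the remaining step is the classical Br\'ezis--Lieb / monotonicity argument $\langle (-\Delta_p)^s u_n - (-\Delta_p)^s u,\, u_n-u\rangle \to 0 \Rightarrow u_n\to u$ strongly for the fractional $p$-Laplacian. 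Finally, the strict inequality $\lambda_1<\lambda_2$ follows from the simplicity of $\lambda_1$ combined with the fact that any second eigenfunction must change sign (by orthogonality in the weighted $L^p$ sense and principality of $\lambda_1$).
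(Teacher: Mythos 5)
Your overall strategy matches the paper's: you use the compactness of $W_1$ coming from Theorem \ref{allinone1}, Fatou for the $w_2$-part, the direct method for $\lambda_1$, $|u|$ plus a strong minimum principle for principality, hidden convexity for simplicity, and a genus-based Ljusternik--Schnirelmann scheme for the higher eigenvalues. Those are all the right ingredients. However, there is a genuine gap in the Ljusternik--Schnirelmann part.

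You propose to run the constrained minimax directly on $\mathcal{M}=\{u\in\D^{s,p}(\R^N):\int w|u|^p\,dx=1\}$, treating it as a $C^1$ submanifold of $\D^{s,p}(\R^N)$. But since $w_2$ is only in $L^1_{\mathrm{loc}}(\R^N)$, the constraint functional $W(u)=\int w|u|^p\,dx$ is not even guaranteed to be finite, let alone continuous or $C^1$, on $\D^{s,p}(\R^N)$. Consequently $\mathcal{M}$ need not be closed, and it certainly does not inherit a $C^1$ Banach-manifold structure from the $\|\cdot\|_{s,p}$ topology. This is exactly the obstruction the paper confronts head on: it introduces the auxiliary space $X=\{u\in\D^{s,p}(\R^N):\|u\|_X<\infty\}$ with $\|u\|_X^p=\|u\|_{s,p}^p+\int w_2|u|^p\,\dx$, proves $X$ is a uniformly convex (hence reflexive) Banach space, shows that $W$, $W_1$, $W_2$, $J$ are all $C^1$ on $X$ (so that $\mathbb{S}$ is a $C^1$ submanifold of $X$ and $1$ is a regular value of $W$), establishes a monotonicity/convergence lemma for the operator $A_\lambda=J'+\lambda W_2'$ on $X$, and proves compactness of $W_1':X\to X'$; only then is the Palais--Smale condition verified in $X$, not in $\D^{s,p}(\R^N)$. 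Without this change of ambient space your Br\'ezis--Lieb/monotonicity step has no place to live, and Szulkin's theorem cannot be invoked. A secondary, smaller gap: the statement that PS alone yields $\lambda_k\to\infty$ is not automatic; the paper supplies a separate argument using the separability of $X$ and a biorthogonal system to produce a minorizing sequence $\mu_n\le\lambda_n$ with $\mu_n\to\infty$. You would need something like this to close the argument.

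Everything else (existence and positivity of $\lambda_1$, sign of the first eigenfunction, hidden-convexity simplicity, Picone argument showing higher eigenfunctions change sign, hence $\lambda_1<\lambda_2$) is essentially the paper's route and is fine modulo the references you cite.
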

The rest of the article in organized as follows. In Section \ref{Prelims}, we recall some preliminary notions and known results that are required for the development of this article. Section \ref{Compactness} is dedicated for the proof of Theorem \ref{allinone1}. We discuss the weighted eigenvalue problem in Section \ref{WEVP}.

\section{Preliminaries}\label{Prelims}
In this section, we recall the notion of symmetrization, define Lorentz space, and provide some known results that will be used in the subsequent sections.

\subsection{Symmetrization}
The set of all extended real-valued Lebesgue measurable functions that are finite a.e. in $\mathbb{R}^{N}$, is denoted by $\mathcal{L}(\mathbb{R}^{N})$. For $f \in \mathcal{L}(\mathbb{R}^{N})$ and for $s>0$, we define $T_{f}(s) = \{ x:|f(x)|>s\} $ and the distribution function $\delta_{f}$ of $f$ is defined as
$$ \delta_{f}(s) = |T_{f}(s)|,~~\text{for}~s>0,$$
where $|E|$ denotes the $N$-dimensional Lebesgue measure of the set $E$. The one-dimensional decreasing rearrangement $f^{*}$ of $f$ is defined as below:
$$ f^{*}(t) = \begin{cases}
     \mathrm{ess\ sup} f,~\tk{\text{ if }}~t=0 \\
      \mathrm{inf}\{s>0:\delta_{f}(s)<t\},~\tk{\text{ if }} t>0 \,.
    \end{cases} $$
The map $f \mapsto f^{*}$ is not sub-additive. However, we obtain a sub-additive function from $f^{*}$, namely the maximal function $f^{**}$ of $f^{*}$, defined by 
$$ f^{**}(t) = \frac{1}{t}\int_{0}^{t}f^{*}(s) \,\mathrm{d}s,~~~t>0.$$
The sub-additivity of $f^{**}$ with respect to $f$ helps us to define norms in certain function spaces.

The Schwarz symmetrization of $f$ is defined by 
$$ f^{\star}(x) = f^{*}(\omega_{N}|x|^{N}),~~\forall~x \in \mathbb{R}^{N},$$
where $\omega_{N}$ is the measure of the unit ball in $\mathbb{R}^{N}$.
Next, we state an important inequality concerning the Schwarz symmetrization; see \cite[Theorem 3.2.10]{Edmunds}.
\begin{proposition}[Hardy-Littlewood inequality] Let  $f,g \in \mathcal{L}(\mathbb{R}^{N})$ be non-negative functions. Then
\begin{equation}\label{Hardy Littlewood}
    \int_{\mathbb{R}^{N}} f(x) g(x) \dx \leq \int_{\mathbb{R}^{N}} f^{\star}(x) g^{\star}(x) \dx = \tk{\int_{0}^{\infty} f^{*}(t) g^{*}(t) \dt}.
\end{equation}
\end{proposition}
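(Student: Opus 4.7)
The plan is to prove the inequality via the layer-cake (distribution function) representation and then handle the equality by a radial change of variables. Concretely, for any non-negative measurable $h$ one has $h(x)=\int_0^\infty \chi_{\{h>s\}}(x)\,\ds$, so writing $f(x)g(x)=\int_0^\infty\!\!\int_0^\infty \chi_{\{f>s\}}(x)\chi_{\{g>t\}}(x)\,\ds\,\dt$ and applying Fubini gives
\[
\int_\Omega f(x)g(x)\,\dx = \int_0^\infty\!\!\int_0^\infty \mu\bigl(T_f(s)\cap T_g(t)\bigr)\,\ds\,\dt,
\]
and the analogous identity holds for $f^\star g^\star$ on $\Omega^\star$. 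The proof then reduces to comparing, for each fixed $(s,t)$, the measure $\mu(T_f(s)\cap T_g(t))$ with $\mu(T_{f^\star}(s)\cap T_{g^\star}(t))$.

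The first key step is to observe that, by definition of the Schwarz symmetrization, $T_{f^\star}(s)$ and $T_{g^\star}(t)$ are open balls centered at the origin with $\mu(T_{f^\star}(s))=\delta_f(s)=\mu(T_f(s))$ and $\mu(T_{g^\star}(t))=\delta_g(t)=\mu(T_g(t))$. Since two concentric balls are nested, one of them contains the other, and therefore
\[
\mu\bigl(T_{f^\star}(s)\cap T_{g^\star}(t)\bigr)=\min\bigl\{\mu(T_f(s)),\,\mu(T_g(t))\bigr\}\ \geq\ \mu\bigl(T_f(s)\cap T_g(t)\bigr),
\]
the last inequality holding for arbitrary measurable sets. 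Integrating in $(s,t)$ yields the first inequality $\int_\Omega fg\,\dx \leq \int_{\Omega^\star} f^\star g^\star\,\dx$.

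For the equality $\int_{\Omega^\star} f^\star g^\star \,\dx = \int_0^{\mu(\Omega)} f^*(t)g^*(t)\,\dt$, I would use the fact that $f^\star$ and $g^\star$ are radial by construction: $f^\star(x)=f^*(\omega_N|x|^N)$ and similarly for $g^\star$. Writing the integral over $\Omega^\star=B_R(0)$ (with $\omega_N R^N=\mu(\Omega)$) in polar coordinates,
\[
\int_{\Omega^\star} f^\star(x)g^\star(x)\,\dx = N\omega_N\int_0^R f^*(\omega_N r^N)\,g^*(\omega_N r^N)\,r^{N-1}\,\dr,
\]
and substituting $\tau=\omega_N r^N$, so that $\diff\tau = N\omega_N r^{N-1}\,\dr$, converts this to $\int_0^{\mu(\Omega)} f^*(\tau)g^*(\tau)\,\diff\tau$, giving the stated equality.

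I do not expect a significant obstacle: the only subtle point is justifying Fubini for the double layer-cake integral, which is legitimate since all integrands are non-negative and measurable. One must also mention that the statement is trivial (both sides equal to $+\infty$ is admissible) if $\mu(\Omega)=\infty$, so the cases $\mu(\Omega)<\infty$ and $\mu(\Omega)=\infty$ should be treated together by allowing $R=\infty$ in the polar-coordinate computation. The whole argument is standard; the delicate ingredient is simply the nestedness of concentric sublevel balls, which is what makes symmetrization maximize the overlap of the level sets.
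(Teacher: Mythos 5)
Your argument is correct, but note that the paper does not actually prove this proposition: it is stated as a known result with a citation to Edmunds--Evans (Theorem 3.2.10 there), so there is no in-paper proof to compare with. What you give is the standard textbook proof, and it is essentially complete: the layer-cake identity plus Tonelli reduces everything to comparing $\mu\bigl(T_f(s)\cap T_g(t)\bigr)$ with $\mu\bigl(T_{f^\star}(s)\cap T_{g^\star}(t)\bigr)$, the nestedness of concentric balls gives the pointwise comparison, and the substitution $\tau=\omega_N r^N$ gives the equality with the one-dimensional rearrangements. The only step you pass over a little quickly is the claim $\mu\bigl(T_{f^\star}(s)\bigr)=\delta_f(s)$: this is not purely "by definition" but uses the equimeasurability of $f$ and $f^*$, i.e.\ the identity $\{t>0: f^*(t)>s\}=[0,\delta_f(s))$, whose proof needs the right-continuity of the distribution function $\delta_f$ (continuity from below of the measure along $\{|f|>s_n\}\uparrow\{|f|>s\}$). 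With that one remark added, and the harmless degenerate cases $\delta_f(s)\in\{0,\infty\}$ noted, your proof is a sound self-contained replacement for the citation.
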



\subsection{Lorentz spaces}
The Lorentz spaces are refinements of the usual Lebesgue spaces and introduced by Lorentz in 1950.
We refer to the book \cite{Edmunds} for more details on Lorentz spaces and related results.\\
Let $ (p,q) \in [1,\infty) \times [1,\infty]$, we define the Lorentz space $L^{p,q}(\R^N)$ as follow:
\begin{align*}
     L^{p,q}(\R^N) := \{ f \in \mathcal{L}(\R^N) ~:~ |f|_{(p,q)} < \infty \},
\end{align*}
where $|f|_{(p,q)}$ is a complete quasi-norm on $L^{p,q}(\R^N)$ and it is given by
\begin{align*}
    |f|_{(p,q)}:= \norm[\bigg]{t^{\frac{1}{p} - \frac{1}{q}} f^{*}(t)}_{L^{q}(0,\infty)} =  \begin{cases} 
       \left(\int_{0}^{\infty} \left[ t^{\frac{1}{p} - \frac{1}{q}} f^{*}(t) \right]^{q} \mathrm{d}t\right)^{\frac{1}{q}}, &\text{ \tk{if} } 1\leq q <\infty, \\
      \sup_{t>0}t^{\frac{1}{p}} f^{*}(t), &\text{\tk{if} } q= \infty.
   \end{cases}
\end{align*}
Moreover, if we define
\begin{align*}
    \|f\|_{(p,q)}:= \left\|t^{\frac{1}{p} - \frac{1}{q}} f^{**}(t)\right\|_{L^{q}(0,\infty)},
\end{align*}
then $\|f\|_{(p,q)}$ is a norm on $L^{p,q}(\R^N)$ and it is equivalent to the quasi-norm $ |f|_{(p,q)}$ {for $p \in (1,\infty]$ and $q \in [1,\infty]$} (see Lemma 3.4.6 of \cite{Edmunds}).   

\subsection{Br\'ezis-Lieb lemma and the discrete Picone-type
identity}
 
 The following lemma is due to Br\'ezis and Lieb  \cite[Theorem 1]{Brezis_Lieb_1983}.
\begin{lem}[Br\'ezis-Lieb lemma]
Let $(\Omega, \mathcal{A},\mu)$ be a measure space and $\{f_{n} \}$ be a sequence of measurable functions which are uniformly bounded in $L^{p}(\Omega,\mu)$ for some $0<p<\infty$. Moreover, if $\{ f_{n} \}$ converges to $f$ a.e., then
\begin{align*}
     \lim\limits_{n \ra \infty} \left| \|f_{n}\|_{  {L^{p}(\Omega,\mu)}} - \|f_{n}-f\|_{  {L^{p}(\Omega,\mu)}} \right| = \|f\|_{  {L^{p}(\Omega,\mu)}}.
\end{align*}
\end{lem}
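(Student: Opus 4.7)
The plan is to prove the standard quantitative form of the Brézis--Lieb identity, namely
\[ \lim_{n\to\infty} \int_{\Omega} \bigl||f_n|^p - |f_n - f|^p - |f|^p\bigr| \, d\mu = 0, \]
from which the displayed equality follows once one interprets $\|\cdot\|_p$ as the usual $L^p$-functional raised to the $p$-th power (so that the right-hand side reads $\|f\|_p^p$). Note that $f \in L^p(\Omega,\mu)$ is automatic: since $f_n \to f$ a.e.\ and $\{f_n\}$ is uniformly bounded in $L^p$, Fatou's lemma applied to $|f_n|^p$ gives $\int |f|^p \, d\mu \leq \liminf_n \int |f_n|^p \, d\mu < \infty$.

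The technical backbone is the following elementary pointwise inequality: for every $\epsilon>0$ there exists a constant $C_\epsilon > 0$ such that for all $a,b \in \mathbb{C}$,
\[ \bigl||a+b|^p - |a|^p\bigr| \leq \epsilon |a|^p + C_\epsilon |b|^p. \]
I would prove this by a case split. When $|b| \leq \delta |a|$ for a small $\delta>0$, the mean-value estimate $\bigl||a+b|^p - |a|^p\bigr| \leq p\, \max(|a|,|a+b|)^{p-1}|b|$, together with Young's inequality in the form $st \leq \epsilon s^{p/(p-1)} + C_\epsilon t^p$ (or a direct bound when $p \leq 1$), absorbs the cross term into $\epsilon |a|^p + C_\epsilon |b|^p$. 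When $|b| > \delta |a|$, both $|a|^p$ and $|a+b|^p$ are controlled by a $\delta$-dependent multiple of $|b|^p$ and the conclusion is immediate.

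Applying the pointwise inequality with $a := f_n - f$ and $b := f$, and combining with the triangle inequality $g_n := \bigl||f_n|^p - |f_n-f|^p - |f|^p\bigr| \leq \bigl||f_n|^p - |f_n-f|^p\bigr| + |f|^p$, I obtain
\[ g_n \leq \epsilon |f_n - f|^p + (C_\epsilon + 1)|f|^p. \]
Since $f_n \to f$ a.e., we have $g_n \to 0$ a.e. Defining the truncated excess $W_{n,\epsilon} := \bigl(g_n - \epsilon |f_n-f|^p\bigr)_+$, we get $0 \leq W_{n,\epsilon} \leq (C_\epsilon+1)|f|^p$ pointwise and $W_{n,\epsilon} \to 0$ a.e., so the dominated convergence theorem gives $\int W_{n,\epsilon}\, d\mu \to 0$. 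Since $\|f_n - f\|_p \leq \|f_n\|_p + \|f\|_p$ is uniformly bounded by some $M<\infty$, we conclude $\int g_n\, d\mu \leq \int W_{n,\epsilon}\, d\mu + \epsilon M^p$, so $\limsup_n \int g_n\, d\mu \leq \epsilon M^p$; letting $\epsilon \to 0$ finishes the proof. The main obstacle is getting the pointwise inequality in a form that simultaneously allows an integrable dominating function and an arbitrarily small $\epsilon$-coefficient on the non-integrable term $|f_n - f|^p$; once this is in hand, the rest is a standard Fatou/DCT interplay.
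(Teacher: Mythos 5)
Your proof is correct and is essentially the classical argument of Br\'ezis and Lieb themselves (the pointwise inequality $\bigl||a+b|^p-|a|^p\bigr|\le \epsilon|a|^p+C_\epsilon|b|^p$, the truncated excess $W_{n,\epsilon}$, and the Fatou/dominated-convergence interplay); the paper gives no proof of this lemma, it only cites Theorem~1 of Br\'ezis--Lieb, whose proof is exactly what you reproduce. Two minor points: your reading of $\|\cdot\|_p$ as the $p$-th power functional is indeed the correct repair of the statement as printed (with genuine norms the displayed identity is false), and for $0<p<1$ you should bound $\int|f_n-f|^p\,\mathrm{d}\mu$ by subadditivity of $t\mapsto t^p$ rather than by the norm triangle inequality, which fails there.
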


Next, we recall a discrete Picone-type
identity in \cite[Lemma 6.2]{Amghibech2008}.
\begin{lem} \label{Picone identity}
Let $p \in (1,\infty)$ and \tk{$u,v: \mathbb{R}^{N} \rightarrow \mathbb{R}$  be two measurable functions. Then, for $u \geq 0 ~\text{and}~v>0$, we have $K(u,v) \geq 0$ in $\mathbb{R}^{N} \times \mathbb{R}^{N}$,} 
where
\begin{align}\label{def:K}K(u,v)(x,y) = |u(x)-u(y)|^{p} - |v(x)-v(y)|^{p-2}(v(x)-v(y)) \left(\frac{u(x)^p}{v(x)^{p-1}} - \frac{u(y)^p}{v(y)^{p-1}}\right).\end{align}
The equality holds if and only if $u = Cv$ a.e. for some constant $C$.
\end{lem}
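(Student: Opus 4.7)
The plan is to establish the pointwise inequality $K(u,v)(x,y) \geq 0$ by reducing it, at each $(x,y)$, to an algebraic inequality in four nonnegative scalars, and then to extract the equality characterization by tracking when each intermediate step is tight. Fix $(x,y)$ and set $a := u(x)$, $b := u(y)$, $c := v(x)$, $d := v(y)$, with $a, b \geq 0$ and $c, d > 0$. First I would observe that $K(u,v)(\cdot,\cdot)$ is symmetric in $(x,y)$: swapping $x$ and $y$ flips the signs of both $v(x)-v(y)$ and the bracket $\tfrac{u(x)^p}{v(x)^{p-1}} - \tfrac{u(y)^p}{v(y)^{p-1}}$, while the factors $|v(x)-v(y)|^{p-2}$ and $|u(x)-u(y)|^p$ are unchanged. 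So I may assume $c \geq d$, whence $|c-d|^{p-2}(c-d) = (c-d)^{p-1} \geq 0$.

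The main tool will be the auxiliary function $\Phi : [0,\infty) \times (0,\infty) \to \mathbb{R}$, $\Phi(\alpha,\gamma) := \alpha^p/\gamma^{p-1}$. A direct Hessian computation shows $\Phi$ is convex; since $\Phi$ is $1$-homogeneous, its Hessian has exactly one zero eigenvalue, with null direction along the radial ray $(\alpha,\gamma)$, so $\Phi$ is \emph{strictly} convex transverse to rays. The first-order convexity inequality at $(a,c)$ evaluated at $(b,d)$ gives
\begin{equation*}
\frac{a^p}{c^{p-1}} - \frac{b^p}{d^{p-1}} \;\leq\; p\Bigl(\frac{a}{c}\Bigr)^{p-1}(a-b) \;-\; (p-1)\Bigl(\frac{a}{c}\Bigr)^p (c-d).
\end{equation*}
Multiplying by $(c-d)^{p-1} \geq 0$ and then applying Young's inequality in the form $p A^{p-1} B \leq (p-1) A^p + B^p$ (valid for $A, B \geq 0$) with $A = (a/c)(c-d)$ and $B = |a-b|$, the tail $(p-1)(a/c)^p(c-d)^p$ cancels exactly and leaves $(c-d)^{p-1}\bigl[\tfrac{a^p}{c^{p-1}} - \tfrac{b^p}{d^{p-1}}\bigr] \leq |a-b|^p$, which is precisely $K(u,v)(x,y) \geq 0$. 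The edge cases $a = 0$ and $c = d$ follow by direct inspection.

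For the equality characterization I would trace both inequalities. Strict convexity of $\Phi$ transverse to rays forces $(b,d) = \lambda(a,c)$ for some $\lambda \geq 0$, i.e.\ $a/c = b/d$. In that event $a - b = (a/c)(c-d) \geq 0$ (since $c \geq d$), which automatically verifies both the Young equality condition $A = B$ and the sign condition required to pass from $(a-b)$ to $|a-b|$. Hence pointwise equality at $(x,y)$ is equivalent to $u(x)/v(x) = u(y)/v(y)$. If $K(u,v) \equiv 0$ on $\mathbb{R}^N \times \mathbb{R}^N$, a Fubini argument then yields that $u/v$ is a.e.\ constant, so $u = cv$ a.e., and the converse is a direct substitution. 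The main obstacle is precisely this equality analysis: because the Hessian of $\Phi$ is degenerate along the radial direction, one must carefully isolate the strict-inequality information transverse to that direction and verify that the Young equality and sign conditions come \emph{for free} once $a/c = b/d$ is in hand.
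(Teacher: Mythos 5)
Your argument is correct, but note that the paper does not prove this lemma at all: it is quoted verbatim from Amghibech \cite[Lemma 6.2]{Amghibech2008}, so there is no in-paper proof to compare against. Your self-contained route — reduce to scalars $a=u(x)$, $b=u(y)$, $c=v(x)$, $d=v(y)$, use the symmetry of $K$ in $(x,y)$ to assume $c\geq d$, apply the supporting-hyperplane inequality for the convex, $1$-homogeneous function $\Phi(\alpha,\gamma)=\alpha^p/\gamma^{p-1}$ at $(a,c)$, multiply by $(c-d)^{p-1}\geq 0$, and finish with Young's inequality $pA^{p-1}B\leq (p-1)A^p+B^p$ with $A=(a/c)(c-d)$, $B=|a-b|$ — is essentially the standard proof of discrete Picone inequalities in the literature (the hidden-convexity argument of Brasco--Franzina/Amghibech), and the cancellation of the $(p-1)(a/c)^p(c-d)^p$ terms indeed yields $K\geq 0$; the equality analysis via the radial degeneracy of the Hessian correctly identifies pointwise equality with $u(x)/v(x)=u(y)/v(y)$, and the Fubini step then gives $u=cv$ a.e. Two small points deserve explicit treatment rather than being left to ``direct inspection'': in the equality direction, when $v(x)=v(y)$ the convexity inequality has been multiplied by zero, so the strict-transverse-convexity argument yields nothing and you must instead read off $|u(x)-u(y)|^p=0$ directly from $K=0$; similarly, when $u(x)=0$ (where the Hessian computation for $\Phi$ degenerates for $1<p<2$) equality forces $u(y)=0$ by a one-line computation. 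Both cases still give equal ratios, so the characterization stands, but they fall outside the strict-convexity mechanism you invoke and should be stated.
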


\subsection{Some important estimates} \label{Imp_est}
  {Here} we recall the scaling property and the decay estimate of the nonlocal $(s,p)$-gradient given by Bonder et al. \cite{Bonder}. For $u \in  \mathcal{D}^{s,p}(\R^N)$, define
\begin{align*}
|D^s u(x)|^p = \int_{\R^N} \frac{|u(x+h)-u(x)|^p}{|h|^{N+sp}} \ \mathrm{d}h, \tk{\text{ for a.e.  } x\in \R^N.}
\end{align*}
\begin{lem}[{{\cite[Lemma 2.1]{Bonder}}}]\label{Bonder lemma one}
Let $\phi \in \mathcal{D}^{s,p}(\mathbb{R}^N)$. Given $r > 0$ and $x_0 \in \mathbb{R}^N$, define the scaled function
$\phi_{x_0,r}(x)=\phi(\frac{x-x_0}{r})$. Then 
\begin{align*}
    |D^s \phi_{x_0,r}(x)|^{p} = \frac{1}{r^{sp}} \left|D^s \phi\left(\frac{x-x_0}{r}\right)\right|^{p}.
\end{align*}
\end{lem}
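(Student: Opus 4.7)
The plan is to prove this scaling identity by a direct change of variables in the integral that defines the nonlocal $(s,p)$-gradient. Writing out the definition,
\begin{equation*}
|D^s \phi_{x_0,r}(x)|^p = \int_{\R^N} \frac{|\phi_{x_0,r}(x+h) - \phi_{x_0,r}(x)|^p}{|h|^{N+sp}} \, \mathrm{d}h,
\end{equation*}
and using $\phi_{x_0,r}(z) = \phi\bigl(\frac{z-x_0}{r}\bigr)$, the numerator becomes $\bigl|\phi\bigl(\frac{x-x_0}{r} + \frac{h}{r}\bigr) - \phi\bigl(\frac{x-x_0}{r}\bigr)\bigr|^p$.

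Next, I would perform the substitution $h = r k$, so that $\mathrm{d}h = r^N \mathrm{d}k$ and $|h|^{N+sp} = r^{N+sp} |k|^{N+sp}$. Setting $y = \frac{x-x_0}{r}$ and collecting the $r$-powers, the integral transforms into
\begin{equation*}
\int_{\R^N} \frac{|\phi(y+k) - \phi(y)|^p}{r^{N+sp}|k|^{N+sp}} \, r^N \, \mathrm{d}k = \frac{1}{r^{sp}} \int_{\R^N} \frac{|\phi(y+k) - \phi(y)|^p}{|k|^{N+sp}} \, \mathrm{d}k = \frac{1}{r^{sp}} |D^s \phi(y)|^p,
\end{equation*}
which is exactly the claimed identity with $y = \frac{x-x_0}{r}$.

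Since this is a purely computational scaling argument, there is no real obstacle: the Jacobian of the dilation $h \mapsto rk$ contributes $r^N$, the kernel $|h|^{-N-sp}$ contributes $r^{-(N+sp)}$, and the surplus factor $r^{-sp}$ is precisely what the statement predicts. The measurability and integrability issues are inherited from $\phi \in \mathcal{D}^{s,p}(\R^N)$, so no further justification is required beyond Fubini/Tonelli to ensure the substitution is valid for a.e.\ $x$.
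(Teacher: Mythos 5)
Your computation is correct, and it is the standard (indeed the only natural) proof of this scaling identity: the paper itself does not prove the lemma but simply cites \cite[Lemma 2.1]{Bonder}, whose argument is the same change of variables $h = rk$. Your bookkeeping of the Jacobian $r^N$ against the kernel factor $r^{-(N+sp)}$ is right, yielding the net $r^{-sp}$, and the remark about measurability/integrability being inherited from $\phi \in \mathcal{D}^{s,p}(\R^N)$ is appropriate.
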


\begin{lem}[{{\cite[Lemma 2.2]{Bonder}}}] \label{Bonder lemma two}
Let $\phi \in W^{1,\infty}(\R^N)$ be such that $\mbox{supp}(\phi) \subset B_{1}(0)$. Then, there exists a constant $\text{C}>0$ depends on $N,s,p$ and $\|\phi\|_{W^{1,\infty}}$ such that
\begin{align*}
    |D^s \phi(x)|^p \leq \text{C} \min\{1,|x|^{-(N+sp)}\} \,.
\end{align*}
\end{lem}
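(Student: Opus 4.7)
The plan is to prove the two bounds separately, combining them in the last step. Write $|D^s\phi(x)|^p = \int_{\R^N} \frac{|\phi(x+h)-\phi(x)|^p}{|h|^{N+sp}}\,\diff h$ and observe that the right-hand side $\min\{1,|x|^{-(N+sp)}\}$ is just $1$ on bounded sets and decays like $|x|^{-(N+sp)}$ at infinity, so the argument naturally splits into a uniform bound on a neighborhood of the support and a decay estimate far from the support.

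First I would establish a uniform bound $|D^s\phi(x)|^p \leq C$ for all $x \in \R^N$. Split the integral in $h$ into $|h|\leq 1$ and $|h|\geq 1$. On the near region use the Lipschitz estimate $|\phi(x+h)-\phi(x)| \leq \|\nabla\phi\|_\infty |h|$ to bound the integrand by $\|\nabla\phi\|_\infty^p |h|^{p-N-sp}$, whose integral over $\{|h|\leq 1\}$ converges because $p-sp = p(1-s) > 0$. On the far region use the crude bound $|\phi(x+h)-\phi(x)| \leq 2\|\phi\|_\infty$, which yields an integral of $|h|^{-N-sp}$ over $\{|h|\geq 1\}$ that converges because $sp>0$. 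This gives the constant $C$ depending only on $N,s,p$ and $\|\phi\|_{W^{1,\infty}}$.

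Next I would prove the decay bound $|D^s\phi(x)|^p \leq C |x|^{-(N+sp)}$ for $|x|\geq 2$. The crucial observation is that when $|x|\geq 2$, the point $x$ lies outside $\overline{B_1(0)}$, so $\phi(x) = 0$, and thus
\[
|D^s\phi(x)|^p \;=\; \int_{\R^N} \frac{|\phi(x+h)|^p}{|h|^{N+sp}} \,\diff h \;=\; \int_{\{h:\, x+h \in B_1(0)\}} \frac{|\phi(x+h)|^p}{|h|^{N+sp}} \,\diff h.
\]
For any $h$ in this region one has $|x+h|<1$, hence $|h| \geq |x| - |x+h| \geq |x|-1 \geq |x|/2$, so $|h|^{-(N+sp)} \leq 2^{N+sp}|x|^{-(N+sp)}$. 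Bounding $|\phi(x+h)|^p \leq \|\phi\|_\infty^p$ and estimating the measure of the region by $|B_1(0)|$ gives the desired decay.

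Finally I would combine: for $|x| \leq 2$ we have $\min\{1,|x|^{-(N+sp)}\} \geq 2^{-(N+sp)}$, so the uniform bound from step one reads $|D^s\phi(x)|^p \leq C \leq C \cdot 2^{N+sp}\min\{1,|x|^{-(N+sp)}\}$; for $|x|>2$ the decay estimate gives $|D^s\phi(x)|^p \leq C'|x|^{-(N+sp)} = C'\min\{1,|x|^{-(N+sp)}\}$. Taking the maximum of the two constants finishes the proof. There is no real obstacle here; the only point requiring care is to verify that the exponents $p(1-s)-N$ and $-N-sp$ give integrable singularities on their respective regions, and to use the support condition on $\phi$ to convert the problem at large $|x|$ into an estimate of $|h|$ from below.
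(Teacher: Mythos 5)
Your proof is correct, and it is essentially the standard argument: the paper itself offers no proof of this lemma (it is quoted directly from \cite[Lemma 2.2]{Bonder}), and the cited reference proves it exactly as you do, splitting the $h$-integral into $|h|\le 1$ (Lipschitz bound, integrable since $p(1-s)>0$) and $|h|\ge 1$ (sup bound, integrable since $sp>0$) for the uniform estimate, and using $\phi(x)=0$ together with $|h|\ge |x|-1\ge |x|/2$ on the support condition for the decay at $|x|\ge 2$. The only point worth stating explicitly is that $\phi\in W^{1,\infty}(\R^N)$ is (a.e.\ equal to) a Lipschitz function with constant $\|\nabla\phi\|_{\infty}$, which justifies the near-region estimate; with that remark your argument is complete and yields a constant depending only on $N,s,p$ and $\|\phi\|_{W^{1,\infty}}$, as required.
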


\begin{remark} \label{infestimate} \rm 
	Let $\phi \in W^{1,\infty}(\R^N)$ with compact support. \tk{Then, by Lemma \ref{Bonder lemma two}, $D^s \phi \in L^{\infty}(\R^N) \cap L^p(\R^N)$.}
	Now, let $\psi \in C_b^{1}(\R^N)$ be such that $0\leq \psi\leq 1$, $\psi=0$ on $B_1(0)$, and $\psi =1$ on $B_2(0)^c$. Then, $\phi := 1-\psi \in W^{1,\infty}(\R^N)$ with support in $B_2(0)$ and $|D^s \psi| = |D^s \phi|$. Thus,
    \begin{align*}
        |D^s \psi(x)|^p \leq C \min\{1,|x|^{-(N+sp)}\} \,,
    \end{align*}
	where $C>0$ depends on $N,s,p$ and $\|\psi\|_{W^{1,\infty}}$.
\end{remark}

\section{Compactness of the energy functional} \label{Compactness}
 {In this section, we prove our main result Theorem \ref{allinone1}. We divide the section into several subsections and develop the required tools for proving Theorem \ref{allinone1}. Indeed, each subsection contains certain results that are independently significant in their
own right.}
\subsection{\tk{Characterization} of the Hardy potentials}

\tk{First we recall the  Maz’ya-type characterization of fractional-Hardy-weights. This characterization has been established in different settings, e.g. see \cite{DP, DKP, Hou2024}.  For a proof of the following theorem, we refer to \cite[Proposition 3.1]{Dyda}.}
\begin{theorem} \label{H1}
Let  {$0<s<1<p<\frac{N}{s}$ and} $w \in L^{1}_{\mathrm{loc}}(\R^N)$. Then $ w \in \mathcal{H}_{s,p}(\mathbb{R}^{N})$ if and only if $\|w\|_{\mathcal{H}_{s,p}}<\infty$. Moreover, if $\|w\|_{\mathcal{H}_{s,p}}<\infty$ then  $$\int_{\R^N} w |u|^{p} \dx \leq C_{H} \|w\|_{\mathcal{H}_{s,p}} \int_{\mathbb{R}^{N} \times \mathbb{R}^{N}} \dfrac{|u(x)-u(y)|^{p}}{|x-y|^{N+sp}} \dxy, \ \ ~\forall ~ u \in \mathcal{D}_{}^{s,p}(\mathbb{R}^{N})$$
holds for some $C_H=C_H(N,s,p)>0$ which is independent of $w$.
\end{theorem}
For $0<s<1<p<\frac{N}{s}$, we define $(s,p)$-Capacity relative to a domain $\Omega$ in $\R^N$ as follows.
\begin{definition}[Relative $(s,p)$-Capacity]\rm
 Let $\Om \subset \R^N$ be a domain. For a \tk{compact set} $F \subset \Omega$,
 we define the capacity of $F$ with respect to $\Om$ by
\begin{align*}
    {\rm{Cap}}_{s,p}(F, \Omega)=\inf \big\{\norm{u}_{s,p}^p: \ u \in \mathcal{N}_{s,p}(F,\Omega)\big\} \,,
\end{align*}
where $\mathcal{N}_{s,p}(F, \Om):= \{ u \in C^{\infty}_c(\Om): u \geq 1 \text{ on } F\}.$ 
One may assume that $u =1$ on $F$ and $0 \leq u \leq 1$ in $\Om$ (see \cite[Theorem 2.1]{Xiao}).  
\end{definition}
The next proposition gives an interesting property of the $(s,p)$-capacity, which will play an important role in the proof of Theorem \ref{allinone1}.  The scaling property and the decay estimate of the nonlocal $(s,p)$-gradient from Section \ref{Imp_est} will be crucial for its proof. 
\begin{proposition} \label{propofcap}
   {Let $0<s<1<p<\frac{N}{s}$.} There exists $C_1, C_2 >0$ such that for a \tk{compact set} $F  \subset \R^N,$ 
   \begin{enumerate}[(i)]
    \item $ {\rm{Cap}}_{s,p}(F \cap \tk{\overline{B_r(x)}}, B_{2r}(x)) \leq C_1 {\rm{Cap}}_{s,p}(F \cap \tk{\overline{B_r(x)}}, \R^N),\ \forall r>0.$
    \item ${\rm{Cap}}_{s,p}(F \cap B_{2R}^c, \overline{B_R}^c) \leq C_2 {\rm{Cap}}_{s,p}(F \cap B_{2R}^c, \R^N), \ \forall R>0.$
   \end{enumerate}
\end{proposition}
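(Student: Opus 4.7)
The plan is to prove both inequalities by a cutoff construction: for each part, take a near-minimizer $u$ of the right-hand side capacity (truncated so that $0\leq u\leq 1$), multiply by a suitable smooth cutoff $\eta$ supported in the target domain, and show that the product $v:=u\eta$ is admissible for the left-hand side capacity with $\|v\|_{s,p}^p\leq C\|u\|_{s,p}^p$.

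For (i), choose once and for all $\eta\in C_c^{\infty}(\R^N)$ with $0\leq\eta\leq 1$, $\eta\equiv 1$ on $B_1(0)$ and $\operatorname{supp}\eta\subset B_2(0)$, and set $\eta_{x,r}(y):=\eta((y-x)/r)$. Given $\varepsilon>0$, choose $u\in\mathcal{N}_{s,p}(F\cap B_r(x))$ (truncated to $[0,1]$) with $\|u\|_{s,p}^p\leq\mathrm{Cap}_{s,p}(F\cap B_r(x),\R^N)+\varepsilon$. Then $v:=u\,\eta_{x,r}$ is supported in $B_{2r}(x)$, equals $1$ on $F\cap B_r(x)$, and (after a standard density argument) lies in $\D^{s,p}_0(B_{2r}(x))$. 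The Leibniz-type pointwise splitting
\[ |v(y)-v(z)|^p\leq 2^{p-1}\bigl(|u(y)|^p|\eta_{x,r}(y)-\eta_{x,r}(z)|^p+|\eta_{x,r}(z)|^p|u(y)-u(z)|^p\bigr) \]
yields
\[ \|v\|_{s,p}^p\leq 2^{p-1}\int_{\R^N}|u(y)|^p|D^s\eta_{x,r}(y)|^p\,\dy+2^{p-1}\|u\|_{s,p}^p. \]
The crux is the scale-invariant bound $\int_{\R^N}|u|^p|D^s\eta_{x,r}|^p\,\dy\leq C\|u\|_{s,p}^p$. By Lemmas \ref{Bonder lemma one}--\ref{Bonder lemma two} (cf. Remark \ref{infestimate}), $|D^s\eta_{x,r}(y)|^p\leq C\min\{r^{-sp},\,r^N|y-x|^{-(N+sp)}\}$. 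Decompose $\R^N=B_{2r}(x)\cup\bigcup_{k\geq 1}A_k$ with $A_k:=B_{2^{k+1}r}(x)\setminus B_{2^kr}(x)$. On each piece, combine the pointwise bound on $|D^s\eta_{x,r}|^p$ with the Hölder--Sobolev estimate $\int_{B_{\rho}(x)}|u|^p\,\dy\leq C\rho^{sp}\|u\|_{s,p}^p$ (which follows from fractional Sobolev embedding $\|u\|_{L^{p_s^*}}\leq C\|u\|_{s,p}$ with $p_s^*=Np/(N-sp)$, and Hölder's inequality): the central ball contributes $C\|u\|_{s,p}^p$, and the $k$-th annulus contributes $C\,2^{-kN}\|u\|_{s,p}^p$. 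Summing the geometric series proves the claim, and letting $\varepsilon\to 0$ gives (i).

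For (ii), replace the ``inner'' cutoff by an ``outer'' cutoff $\psi_R(y):=\psi(y/R)$, where $\psi\in C^{\infty}(\R^N)$ satisfies $\psi\equiv 0$ on $\overline{B_1}$ and $\psi\equiv 1$ on $B_2^c$. For a near-optimal $u\in\mathcal{N}_{s,p}(F\cap B_{2R}^c)$ truncated to $[0,1]$, the product $v:=u\psi_R$ vanishes on $\overline{B_R}$ and equals $u\geq 1$ on $F\cap B_{2R}^c$, so it is admissible for $\mathrm{Cap}_{s,p}(F\cap B_{2R}^c,\overline{B_R}^c)$. Since $D^s\psi_R=-D^s(1-\psi_R)$ and $1-\psi_R$ is compactly supported in $B_{2R}$, Remark \ref{infestimate} again gives $|D^s\psi_R(y)|^p\leq C\min\{R^{-sp},R^N|y|^{-(N+sp)}\}$, and the identical split-and-sum argument (with annuli centered at the origin of inner radii $2R,4R,\ldots$) yields $\|v\|_{s,p}^p\leq C\|u\|_{s,p}^p$.

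The main obstacle is the weighted global estimate $\int_{\R^N}|u|^p|D^s\eta|^p\,\dy\leq C\|u\|_{s,p}^p$: carrying it out requires coupling the pointwise decay of $|D^s\eta|$ with Hölder's inequality and the fractional Sobolev embedding on a dyadic decomposition in just such a way that the geometric-series ratio $2^{-N}$ is independent of the scale parameter $r$ (resp. $R$), which is precisely what yields scale-invariant constants $C_1,C_2$. A secondary technicality is verifying $v\in\D^{s,p}_0$ of the correct domain rather than merely $\D^{s,p}(\R^N)$ with compact support, but this follows by a standard density argument on the minimizing sequence.
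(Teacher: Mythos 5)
Your proposal is correct, and the skeleton of the argument (take a near-minimizer $u$ for $\mathrm{Cap}_{s,p}(F\cap B_r(x),\R^N)$, multiply by a scaled cutoff $\phi_{x,r}$, and reduce the claim to the commutator estimate $\int_{\R^N}|u|^p|D^s\phi_{x,r}|^p\,\dx\leq C\|u\|_{s,p}^p$ with $C$ independent of $r$) coincides with the paper's proof of Proposition~\ref{propofcap}. Where you diverge is in how this commutator estimate is established. The paper applies a single global H\"older inequality with exponents $\tfrac{N}{N-sp}$ and $\tfrac{N}{sp}$, yielding $\int|u|^p|D^s\phi_{x,r}|^p\leq\|u\|_{p_s^*}^p\left(\int|D^s\phi_{x,r}|^{N/s}\right)^{sp/N}$; the second factor is then shown to be $r$-independent via the scaling identity $|D^s\phi_{x,r}|^p=r^{-sp}|D^s\phi(\tfrac{\cdot-x}{r})|^p$ (Lemma~\ref{Bonder lemma one}) and a change of variables, while $D^s\phi\in L^{N/s}$ follows from $D^s\phi\in L^p\cap L^\infty$ (Remark~\ref{infestimate}). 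You instead decompose $\R^N$ into dyadic annuli around $x$, use the pointwise decay $|D^s\phi_{x,r}(y)|^p\lesssim\min\{r^{-sp},r^N|y-x|^{-(N+sp)}\}$ (the same two lemmas), apply H\"older--Sobolev locally on each piece, and sum a geometric series with ratio $2^{-N}$. Both routes exploit the identical ingredients (the scaling lemma, the decay of $D^s\phi$, and fractional Sobolev embedding); the paper's one-shot H\"older is shorter and hides the scale invariance in the change of variables, while your dyadic version makes the radial decay explicit and is arguably more transparent about where $sp<N$ enters. One small refinement you could make: rather than invoking a ``standard density argument'' for $v\in\D^{s,p}_0(B_{2r}(x))$, it suffices to choose the near-minimizer $u\in C_c^1(\R^N)$ (as the paper does), so that $v=u\,\eta_{x,r}$ is automatically a $C_c^1$ function supported in $B_{2r}(x)$; the subsequent truncation to $[0,1]$, if wanted, preserves membership in $\D^{s,p}_0$.
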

\begin{proof}
$(i)$ Fix $x_0 \in \R^N$ and choose $\epsilon >0$ arbitrarily. Then, there exists $u \in C^{\infty}_c(\R^N)$ with $u \geq 1 $ on $F \cap \tk{\overline{B_r(x_0)}}$ such that 
$$\int_{\R^N \times \R^N} \frac{|u(x)-u(y)|^p}{|x-y|^{N+sp}} \dxy < \mbox{Cap}_{s,p}(F \cap \tk{\overline{B_r(x_0)}}, \R^N) + \epsilon \,.$$
Take $\phi \in C_c^{\infty}(\R^N)$ such that $0 \leq \phi \leq 1$, $\phi =1$ on $\overline{B_1(0)}$ and vanishes outside $B_2(0)$. Consider $v_r=u \phi_{x_0,r}$, where $\phi_{x_0,r}(y)=\phi(\frac{y-x_0}{r})$. Note that, $v_r \geq 1$ on $F \cap \tk{\overline{B_r(x_0)}}$. Now
\begin{align*}
 & \, \int_{\R^N \times \R^N} \frac{|v_{r}(x)-v_{r}(y)|^p}{|x-y|^{N+sp}} \dxy \\
 & \leq \tk{2^{p-1}} \left( \int_{\R^N \times \R^N} \frac{|u(x)-u(y)|^p}{|x-y|^{N+sp}}\dxy + \int_{\R^N \times \R^N} |u(y)|^p \frac{|\phi_{x_0,r}(x)-\phi_{x_0,r}(y)|^p}{|x-y|^{N+sp}} \dxy \right)\\
 & := \tk{2^{p-1}}(I_1 + I_2) \,.
\end{align*}
We estimate $I_2$ as follows.
\begin{eqnarray*}
    I_2  & = & \int_{\R^N} |u(y)|^p \int_{\R^N} \frac{|\phi_{x_0,r}(x)-\phi_{x_0,r}(y)|^p}{|x-y|^{N+sp}} \ \dx  \dy \\
 & = & \int_{\R^N} |u(y)|^p \int_{\R^N} \frac{|\phi_{x_0,r}(y+z)-\phi_{x_0,r}(y)|^p}{|z|^{N+sp}} \ \mathrm{d}z \dy  \\
  & \tk{=} & \int_{\R^N} |u(y)|^p |D^s \phi_{x_0,r}(y)|^p   \dy \\
  & = & \frac{1}{r^{sp}} \int_{\R^N} |u(y)|^p \left|D^s \phi\left(\frac{y-x_0}{r}\right)\right|^p  \dy \ \ \mbox{  \cite[Lemma \ 2.1]{Bonder}}  \\
  & \leq & \|D^s \phi\|_{\infty}^p \left( \int_{\R^N} |u(y)|^{p_s^*}  \dy \right)^{\frac{N-sp}{N}} \left( \int_{\R^N} \left|\frac{D^s \phi(y)}{\|D^s \phi\|_{\infty}} \right|^{\frac{N}{s}}   \dy  \right)^{\frac{sp}{N}} \\
  & \leq & \|D^s \phi\|_{\infty}^p \left( \int_{\R^N} |u(y)|^{p_s^*} \dy \right)^{\frac{N-sp}{N}} \left( \int_{\R^N} \left|\frac{D^s \phi(y)}{\|D^s \phi\|_{\infty}} \right|^{p}   \dy  \right)^{\frac{sp}{N}} \\
  & \leq & C  \int_{\R^N \times \R^N} \frac{|u(x)-u(y)|^p}{|x-y|^{N+sp}}  \dx  \dy
  \ = \ C I_1 \ \ \ \mbox{[by Remark \ref{infestimate}]}\,.
\end{eqnarray*}
  Thus, we obtain
  \begin{align*}
      \mbox{Cap}_{s,p}(F \cap \tk{\overline{B_r(x_0)}},  B_{2r}(x_0)) & \leq \int_{\R^N \times \R^N} \frac{|v_{r}(x)-v_{r}(y)|^p}{|x-y|^{N+sp}}\dxy 
       \leq \tk{2^{p-1}}(1+C) I_1 \\ & <C_{1}\mbox{Cap}_{s,p}(F \cap \tk{\overline{B_r(x_0)}}, \R^N) +C_{1} \epsilon \,,
  \end{align*}
 where $C_{1} = \tk{2^{p-1}}(1+C).$ By taking $\epsilon \ra 0$, we prove $(i).$
  
  $(ii)$ Choose $\epsilon >0$ arbitrarily. Then there exists $u \in C^{\infty}_c(\R^N)$ with $u \geq 1 $ on $F \cap B_{2R}(0)^c$ such that 
$$\int_{\R^N \times \R^N} \frac{|u(x)-u(y)|^p}{|x-y|^{N+sp}} \dxy< \mbox{Cap}_{s,p}(F \cap B_{2R}(0)^c, \R^N) + \epsilon \,.$$
Take $\phi \in C_b^{\infty}(\R^N)$ such that $0 \leq \phi \leq 1$, $\phi =0$ on $B_1(0)$ and $\phi =1$ on $B_2(0)^c$. Consider $v_{R}=u \phi_{R}$, where $\phi_{R}(x)=\phi(\frac{x}{R})$. Now
\begin{align*}
 & \, \int_{\R^N \times \R^N} \frac{|v_{R}(x)-v_{R}(y)|^p}{|x-y|^{N+sp}} \dxy \\  &  \leq \tk{2^{p-1}} \left( \int_{\R^N \times \R^N} \frac{|u(x)-u(y)|^p}{|x-y|^{N+sp}} \dxy+ \int_{\R^N \times \R^N} |u(y)|^p \frac{|\phi_{R}(x)-\phi_{R}(y)|^p}{|x-y|^{N+sp}} \dxy \right) \\
 & := \tk{2^{p-1}}(I_1 + I_2) \,.
\end{align*}
We estimate $I_2$ as follows.
\begin{align*}
I_2  & = \int_{\R^N} |u(y)|^p \int_{\R^N} \frac{|\phi_{R}(x)-\phi_{R}(y)|^p}{|x-y|^{N+sp}} \dxy \\
 & = \int_{\R^N} |u(y)|^p \int_{\R^N} \frac{|\phi_{R}(y+z)-\phi_{R}(y)|^p}{|z|^{N+sp}} \ \mathrm{d}z \mathrm{d}y  \\
  & \tk{=} \int_{\R^N} |u(y)|^p |D^s \phi_{R}(y)|^p  \dy \\
  & = \frac{1}{R^{sp}} \int_{\R^N} |u(y)|^p \left|D^s \phi\left(\frac{y}{R}\right)\right|^p  \dy \ \ \  \mbox{\cite[Lemma \ 2.1]{Bonder}}  \\
  & \leq  \left( \int_{\R^N} |u(y)|^{p_s^*} \mathrm{d}y \right)^{\frac{N-sp}{N}} \left( \int_{\R^N} |D^s \phi(y)|^{\frac{N}{s}}   \mathrm{d}y  \right)^{\frac{sp}{N}} \\
  & \leq \|D^s \phi\|_{\infty}^p \left( \int_{\R^N} |u(y)|^{p_s^*}  \dy \right)^{\frac{N-sp}{N}} \left( \int_{\R^N} \left|\frac{D^s \phi(y)}{\|D^s \phi\|_{\infty}} \right|^{\frac{N}{s}}  \dy  \right)^{\frac{sp}{N}} \\
 & \leq \|D^s \phi\|_{\infty}^p \left( \int_{\R^N} |u(y)|^{p_s^*} \dy \right)^{\frac{N-sp}{N}} \left( \int_{\R^N} \left|\frac{D^s \phi(y)}{\|D^s \phi\|_{\infty}} \right|^{p}  \dy  \right)^{\frac{sp}{N}} \\ 
  & \leq C  \int_{\R^N \times \R^N} \frac{|u(x)-u(y)|^p}{|x-y|^{N+sp}} \dxy
  \ = \ C I_1 \ \ \mbox{[by Remark \ref{infestimate}]}.
  \end{align*}
  Therefore, the result follows.
\end{proof}
The following proposition provides a necessary and sufficient condition for the weights $w\in L^1_{\mathrm{loc}}(\mathbb{R}^N)$ to lie in the space $\mathcal{H}_{s,p,0}(\R^N)$.
\begin{proposition} \label{charF}
Let  {$0<s<1<p<\frac{N}{s}$ and} $w \in L^1_{\mathrm{loc}}(\mathbb{R}^N)$. Then, $w \in \mathcal{H}_{s,p,0}(\R^N)$ if and only if for every $\epsilon >0$, there exists $w_{\epsilon} \in L^{\infty}(\R^N) $ such that $|supp(w_{\epsilon})|< \infty$ (where $|E|$ denotes the $N$-dimensional Lebesgue measure of the set $E$) and $\norm{w-w_{\epsilon}}_{\mathcal{H}_{s,p}}< \epsilon.$ 
\end{proposition}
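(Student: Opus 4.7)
The proof splits into two directions and the non-trivial one reduces to a density argument via an embedding that is already recorded in the introduction.

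For the forward implication ($\Rightarrow$), I would simply invoke the definition of $\mathcal{H}_{s,p,0}(\R^N)$ as $\overline{C_c(\R^N)}$ in $\mathcal{H}_{s,p}(\R^N)$: given $\epsilon>0$, pick $w_\epsilon \in C_c(\R^N)$ with $\|w-w_\epsilon\|_{\mathcal{H}_{s,p}(\R^N)}<\epsilon$. Since a continuous function with compact support is automatically bounded with support of finite Lebesgue measure, this $w_\epsilon$ has the required properties.

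For the reverse implication ($\Leftarrow$), the plan is a triangle inequality combined with the continuous embedding $L^{N/(sp)}(\R^N)\hookrightarrow\mathcal{H}_{s,p}(\R^N)$ noted right after the definition of $\mathcal{H}_{s,p}(\Om)$ in the introduction. Given $\epsilon>0$, first select $w_\epsilon\in L^\infty(\R^N)$ with $|\mathrm{Supp}(w_\epsilon)|<\infty$ and $\|w-w_\epsilon\|_{\mathcal{H}_{s,p}(\R^N)}<\epsilon/2$. I will then show that \emph{every} such $w_\epsilon$ actually lies in $\mathcal{H}_{s,p,0}(\R^N)$, which closes the argument since the closed space $\mathcal{H}_{s,p,0}(\R^N)$ then contains $w$ as a limit point.

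To show $w_\epsilon\in\mathcal{H}_{s,p,0}(\R^N)$, I note that since $w_\epsilon\in L^\infty(\R^N)$ and $|\mathrm{Supp}(w_\epsilon)|<\infty$, the H\"older estimate
\[
\int_{\R^N}|w_\epsilon|^{N/(sp)}\,\dx \leq \|w_\epsilon\|_\infty^{N/(sp)}\,|\mathrm{Supp}(w_\epsilon)|<\infty
\]
places $w_\epsilon$ in $L^{N/(sp)}(\R^N)$. Because $1<N/(sp)<\infty$, the standard density of $C_c(\R^N)$ in $L^{N/(sp)}(\R^N)$ yields $\phi_n\in C_c(\R^N)$ with $\phi_n\to w_\epsilon$ in $L^{N/(sp)}(\R^N)$. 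By the fractional Sobolev inequality there is a constant $S>0$ such that $\|\cdot\|_{\mathcal{H}_{s,p}(\R^N)}\leq S\|\cdot\|_{L^{N/(sp)}(\R^N)}$ on $L^{N/(sp)}(\R^N)$ (applying H\"older with conjugate $r'=N/(N-sp)$ to $\int|w||u|^p\,\dx$ and invoking $\mathcal{D}^{s,p}(\R^N)\hookrightarrow L^{p_s^*}(\R^N)$). Hence $\phi_n\to w_\epsilon$ in $\mathcal{H}_{s,p}(\R^N)$ as well, so $w_\epsilon\in\mathcal{H}_{s,p,0}(\R^N)$. Picking $\phi$ from this sequence with $\|w_\epsilon-\phi\|_{\mathcal{H}_{s,p}(\R^N)}<\epsilon/2$ gives $\|w-\phi\|_{\mathcal{H}_{s,p}(\R^N)}<\epsilon$, concluding the proof.

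The only mildly delicate point is recognising that the right tool is the continuous inclusion $L^{N/(sp)}\hookrightarrow \mathcal{H}_{s,p}$; once this is in hand, everything else is standard density in Lebesgue spaces plus a triangle inequality. No concentration-compactness or capacity estimates are needed for this proposition.
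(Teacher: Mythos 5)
Your proof is correct and follows essentially the same route as the paper: the forward direction is immediate from the definition of $\mathcal{H}_{s,p,0}(\R^N)$, and the reverse direction uses the observation that $w_\epsilon\in L^\infty$ with finite-measure support lies in $L^{N/(sp)}(\R^N)$, then the density of $C_c(\R^N)$ in $L^{N/(sp)}(\R^N)$ together with the continuous embedding $L^{N/(sp)}(\R^N)\hookrightarrow\mathcal{H}_{s,p}(\R^N)$ and a triangle inequality.
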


\begin{proof}
Let $w \in \mathcal{H}_{s,p,0}(\R^N)$ and $\epsilon >0$ be given. By definition of $\mathcal{H}_{s,p,0}(\R^N)$, there exists $w_{\epsilon} \in C_c(\R^N)$ such that $\norm{w-w_{\epsilon}}_{\mathcal{H}_{s,p}} < \epsilon .$ This $w_{\epsilon}$ \tk{fulfills} our requirements. 
For the converse part, take a $w$ satisfying the hypothesis. Let $\epsilon >0 $ be arbitrary. Then there exists $w_{\epsilon} \in L^{\infty}(\R^N)$ such that 
$|supp(w_{\epsilon})|< \infty$ and $\norm{w-w_{\epsilon}}_{\mathcal{H}_{s,p}}< \frac{\epsilon}{2}.$
Thus, $w_{\epsilon} \in L^{\frac{N}{sp}}(\R^N)$ and hence there exists $ \phi_{\epsilon} \in C_c^{}(\R^N)$ such that 
 $\norm{w_{\epsilon}-\phi_{\epsilon}}_{\frac{N}{sp}} < \frac{\epsilon}{2C}$,
 where $C$ is the embedding constant for the embedding $ L^{\frac{N}{sp}}(\R^N)$ into $\mathcal{H}_{s,p}(\R^N)$, \tk{see Proposition \ref{Lorentz space embedding} for a proof of this embedding}. Now by triangle inequality, we obtain $\norm{w-\phi_{\epsilon}}_{\mathcal{H}_{s,p}}< \epsilon$ as required.
\end{proof}


\subsection{Some important embeddings}
In this subsection, we will prove some important embedding results to help us reach our final goal later. First, we prove the following result:
 \begin{proposition} \label{Lorentz space embedding}
For  {$0<s<1<p<\frac{N}{s}$}, $L^{\frac{N}{sp}, \infty}(\mathbb{R}^{N})$ is continuously embedded in $\mathcal{H}_{s,p}(\mathbb{R}^{N})$.
\end{proposition}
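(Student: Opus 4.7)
The plan is to combine the Maz'ya-type characterization of $\mathcal{H}_{s,p}(\R^N)$ with a fractional isocapacitary inequality and the Hardy--Littlewood rearrangement inequality. More precisely, for $w \in L^{N/(sp),\infty}(\R^N)$ I will show
\[
\|w\|_{\mathcal{H}_{s,p}(\R^N)} = \sup_{F \Subset \R^N} \frac{\int_F |w|\,\dx}{\mathrm{Cap}_{s,p}(F)} \leq C\, |w|_{(N/(sp),\infty)},
\]
which gives both $w \in \mathcal{H}_{s,p}(\R^N)$ and continuity of the inclusion.

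First I would bound the numerator. Fix $F \Subset \R^N$ with $|F|<\infty$. By the Hardy--Littlewood inequality \eqref{Hardy Littlewood} applied to $|w|$ and $\chi_F$,
\[
\int_F |w|\,\dx \leq \int_0^{|F|} w^{*}(t)\,\mathrm{d}t.
\]
Since $w \in L^{N/(sp),\infty}(\R^N)$, we have $w^{*}(t) \leq |w|_{(N/(sp),\infty)}\, t^{-sp/N}$ for $t>0$. Integrating on $(0,|F|)$ gives
\[
\int_F |w|\,\dx \leq \frac{N}{N-sp}\, |w|_{(N/(sp),\infty)}\, |F|^{\frac{N-sp}{N}}.
\]

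Next I would bound the denominator from below by $|F|^{(N-sp)/N}$, i.e., an isocapacitary inequality. For any $u \in \mathcal{N}_{s,p}(F)$ we have $u \geq 1$ a.e. on $F$, so by the fractional Sobolev embedding $\mathcal{D}^{s,p}(\R^N) \hookrightarrow L^{p_s^*}(\R^N)$ with $p_s^* = Np/(N-sp)$,
\[
|F|^{1/p_s^*} \leq \left(\int_F |u|^{p_s^*}\,\dx\right)^{1/p_s^*} \leq \|u\|_{L^{p_s^*}(\R^N)} \leq C_S\, \|u\|_{s,p}.
\]
Raising to the $p$-th power and taking the infimum over $u \in \mathcal{N}_{s,p}(F)$ yields
\[
|F|^{\frac{N-sp}{N}} \leq C_S^{\,p}\, \mathrm{Cap}_{s,p}(F).
\]

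Combining the two bounds gives the ratio estimate above with $C = \frac{N}{N-sp}\, C_S^{\,p}$, which is independent of $F$, so $\|w\|_{\mathcal{H}_{s,p}(\R^N)} \leq C\, |w|_{(N/(sp),\infty)}$ and the embedding is continuous. I do not expect any serious obstacle here; the only point to double-check is that it is legitimate to use the quasi-norm $|w|_{(N/(sp),\infty)}$ rather than the norm $\|w\|_{(N/(sp),\infty)}$ (which is fine since they are equivalent by Lemma~3.4.6 of \cite{Edmunds} recalled in the preliminaries), and that capacity can be computed using arbitrary competitors in $\mathcal{D}^{s,p}(\R^N)$ with $u \geq 1$ a.e. on $F$.
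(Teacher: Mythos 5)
Your proof is correct and the conclusion matches the paper's, but the route to the capacity lower bound is genuinely different. The paper first uses the P\'olya--Szeg\"o inequality to reduce to the symmetrized set $F^{\star}$ (decreasing capacity, and leaving $\int_F |w|$ controlled via Hardy--Littlewood), then invokes the explicit lower bound $\mathrm{Cap}_{s,p}(F^{\star}) \geq \mathcal{K}_{N,s,p} R^{N-sp}$ for the capacity of a ball, and finally writes $\int_0^{|F|} w^* = |F|\, w^{**}(|F|)$ to read off the Lorentz norm directly. You instead derive the isocapacitary inequality $|F|^{(N-sp)/N} \leq C_S^p\, \mathrm{Cap}_{s,p}(F)$ by testing the fractional Sobolev embedding $\mathcal{D}^{s,p}(\R^N) \hookrightarrow L^{p_s^*}(\R^N)$ against competitors $u \geq 1$ on $F$, and you bound the numerator by integrating the pointwise weak-type estimate $w^*(t) \leq |w|_{(N/(sp),\infty)}\, t^{-sp/N}$. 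Your version buys you independence from the fractional P\'olya--Szeg\"o inequality (Almgren--Lieb) and from the explicit capacity of balls, at the cost of an extra (trivial) integration and of working with the quasi-norm rather than the $w^{**}$-norm --- which, as you correctly note, is harmless since the two are equivalent. Both are complete proofs; yours is somewhat more elementary in its ingredients.
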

\begin{proof}
Observe that $\mathrm{Cap}_{s,p}(F^{\star})  \leq \mathrm{Cap}_{s,p}(F)$. The inequality follows from P\'olya-Szeg\"o inequality  \cite[Theorem 9.2]{Almgren}. Also, we know that $\mathrm{Cap}_{s,p}(F^{\star}) \geq \mathcal{K}_{N,s,p} R^{N-sp},$ where $R$ is the radius of $F^{\star}$ and $ \mathcal{K}_{N,s,p} > 0 $ is a constant independent of $R$ \cite[Theorem 3]{Xiao}. Now for a \tk{compact} set $F$ with Lebesgue measure $|F|$, \[
\dfrac{\int_{F}|w|(x) \dx}{\mathrm{Cap}_{s,p}(F)} \leq \dfrac{\int_{F^{\star}} w^{\star}(x) \dx}{\mathrm{Cap}_{s,p}(F^{\star})} \leq \dfrac{\tk{\int_{0}^{|F|} w^{\ast}(t) \dt}}{\mathcal{K}_{N,s,p} R^{N-sp}} = \dfrac{\omega_{N} R^{N} w^{\ast \ast}(\omega_{N} R^{N})}{\mathcal{K}_{N,s,p} R^{N-sp}} = \dfrac{\tk{\omega_{N}}R^{sp}~ w^{\ast \ast}(\omega_{N} R^{N})}{\mathcal{K}_{N,s,p}},
\]
where we use Hardy-Littlewood inequality \cite[Thoerem 3.2.10]{Edmunds} in the first and second inequality. By setting $\omega_{N}{R}^{N} = t,$ we get
\begin{eqnarray*}
\dfrac{\int_{F}|w|(x) \dx}{\mathrm{Cap}_{s,p}(F)} \leq \tk{C_{N,s,p}} \|w\|_{(\frac{N}{sp}, \infty)}.
\end{eqnarray*}
Now take the supremum \tk{over compact sets $F \subset \R^N$} to obtain,
\begin{eqnarray*}
\|w\|_{\mathcal{H}_{s,p}} \leq \tk{C_{N,s,p}} \|w\|_{(\frac{N}{sp}, \infty)} \end{eqnarray*} 
with $\tk{C_{N,s,p}} > 0$ and depending on $N,s$ and $p$.
\end{proof}
Let us define the following spaces 
$$L^{\frac{N}{sp},\infty}_0(\mathbb{R}^{N}) = \overline{C_{c}^{}(\mathbb{R}^{N})} ~~\mbox{in} ~~L^{\frac{N}{sp},\infty}(\mathbb{R}^{N}),$$
$$\mathcal{H}_{s,p,0}(\mathbb{R}^{N}) =  \overline{C_{c}^{}(\mathbb{R}^{N})} ~~\mbox{in} ~~\mathcal{H}_{s,p}(\mathbb{R}^{N}).$$
\begin{proposition}
Let $0<s<1<p<\frac{N}{s}$. Then $L^{\frac{N}{sp},\infty}_0(\mathbb{R}^{N}) \subset \mathcal{H}_{s,p,0}(\mathbb{R}^{N})$.
\end{proposition}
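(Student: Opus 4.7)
The plan is to exploit the continuous embedding $L^{\frac{N}{sp},\infty}(\mathbb{R}^N) \hookrightarrow \mathcal{H}_{s,p}(\mathbb{R}^N)$ established in Proposition \ref{Lorentz space embedding}, together with the observation that both subspaces $L^{\frac{N}{sp},\infty}_0(\mathbb{R}^N)$ and $\mathcal{H}_{s,p,0}(\mathbb{R}^N)$ are defined as closures of the \emph{same} set $C_c(\mathbb{R}^N)$ in their respective ambient norms. A continuous linear map between normed spaces sends closures of a subset into the closure of its image, which should give the inclusion essentially for free.

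More concretely, I would start by fixing $w \in L^{\frac{N}{sp},\infty}_0(\mathbb{R}^N)$ and choosing, by definition of this closure, a sequence $\{\phi_n\} \subset C_c(\mathbb{R}^N)$ with $\|\phi_n - w\|_{(\frac{N}{sp},\infty)} \to 0$. Next, I would invoke Proposition \ref{Lorentz space embedding} to produce a constant $\mathcal{K}_{N,s,p} > 0$ such that
\[
\|\phi_n - w\|_{\mathcal{H}_{s,p}(\mathbb{R}^N)} \leq \mathcal{K}_{N,s,p}\, \|\phi_n - w\|_{(\frac{N}{sp},\infty)},
\]
so that $\phi_n \to w$ also in the $\mathcal{H}_{s,p}(\mathbb{R}^N)$ norm. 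Since each $\phi_n \in C_c(\mathbb{R}^N)$, this exhibits $w$ as an $\mathcal{H}_{s,p}$-limit of continuous compactly supported functions, i.e., $w \in \mathcal{H}_{s,p,0}(\mathbb{R}^N)$, which is the desired conclusion.

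There is essentially no obstacle here beyond citing the right embedding: the argument is the standard ``continuous linear maps respect closures'' observation, and the only structural input needed is that $L^{\frac{N}{sp},\infty}(\mathbb{R}^N) \hookrightarrow \mathcal{H}_{s,p}(\mathbb{R}^N)$ is continuous, together with the (tautological) fact that $C_c(\mathbb{R}^N) \subset L^{\frac{N}{sp},\infty}(\mathbb{R}^N)$ so that the approximating sequence makes sense in both norms. Accordingly, the proof should be one short paragraph.
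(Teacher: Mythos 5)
Your proof is correct and follows essentially the same argument as the paper: both sides are defined as closures of $C_c(\mathbb{R}^N)$ in their respective norms, and the norm inequality $\|\cdot\|_{\mathcal{H}_{s,p}(\mathbb{R}^N)} \leq C \|\cdot\|_{(\frac{N}{sp},\infty)}$ from Proposition \ref{Lorentz space embedding} transfers convergence of approximating sequences, giving the inclusion at once.
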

\begin{proof}
\tk{Recall that} $L^{\frac{N}{sp},\infty}_0(\mathbb{R}^{N})$ is the closure of $C_{c}^{}(\mathbb{R}^{N})$ in $L^{\frac{N}{sp}, \infty}(\mathbb{R}^{N})$ and $\mathcal{H}_{s,p,0}(\mathbb{R}^{N})$ is the closure of $C_{c}^{}(\mathbb{R}^{N})$ in $\mathcal{H}_{s,p}(\mathbb{R}^{N})$. From Proposition \ref{Lorentz space embedding}, we have $\|\cdot\|_{\mathcal{H}_{s,p}} \leq C \|\cdot\|_{(\frac{N}{sp}, \infty)}$. Therefore, it is immediate that $L^{\frac{N}{sp},\infty}_0(\mathbb{R}^{N})$ is contained in $\mathcal{H}_{s,p,0}(\mathbb{R}^{N})$.
\end{proof}

In the following proposition, we establish the Lorentz-Sobolev embedding for $\D^{s,p}(\mathbb{R}^{N})$  {using the fractional Hardy-type inequality \eqref{Fractional Hardy}.}
\begin{proposition}
Let $0<s<1<p<\frac{N}{s}$, then $\D^{s,p}(\mathbb{R}^{N})$ is continuously embedded in the Lorentz space $L^{p^{*}_{s},p}(\mathbb{R}^{N})$, where $p^{*}_{s} = \frac{Np}{N-sp}.$
\end{proposition}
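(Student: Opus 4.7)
The plan is to combine the two main ingredients already established in the paper: the fractional Maz'ya-type Hardy inequality in Theorem \ref{H1} and the continuous embedding $L^{N/(sp),\infty}(\R^N) \hookrightarrow \mathcal{H}_{s,p}(\R^N)$ from Proposition \ref{Lorentz space embedding}. The resulting bilinear inequality can then be dualised through the Lorentz pairing between $L^{N/(sp),\infty}$ and $L^{N/(N-sp),1}$, which automatically upgrades the standard fractional Sobolev target $L^{p^{*}_{s}}$ to the finer Lorentz space $L^{p^{*}_{s},p}$.

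More concretely, I would first test Theorem \ref{H1} against an arbitrary $w \in L^{N/(sp),\infty}(\R^N)$; composing with Proposition \ref{Lorentz space embedding} yields the estimate
\begin{equation*}
\int_{\R^N} |w|\,|u|^{p} \dx \;\leq\; C\,\|w\|_{(N/(sp),\infty)}\,\|u\|_{s,p}^{\,p}, \qquad \forall\, u \in \D^{s,p}(\R^{N}).
\end{equation*}
Taking the supremum over all $w \geq 0$ with $\|w\|_{(N/(sp),\infty)} \leq 1$, and invoking the fact that $\|\cdot\|_{(N/(N-sp),1)}$ coincides (up to equivalence of norms) with the associate norm of $\|\cdot\|_{(N/(sp),\infty)}$ in the sense of \cite[Chapter 3]{Edmunds}, I obtain
\begin{equation*}
\bigl\||u|^{\,p}\bigr\|_{(N/(N-sp),\,1)} \;\leq\; C\,\|u\|_{s,p}^{\,p}.
\end{equation*}
Finally, I would use the elementary identity $(|u|^p)^{*}(t) = u^{*}(t)^{p}$ for the decreasing rearrangement, which, upon insertion into the definition of the Lorentz norm, gives $\bigl\||u|^{p}\bigr\|_{(q,r)} = \|u\|_{(pq,\,pr)}^{\,p}$ for any admissible $(q,r)$. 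Applying this with $q = N/(N-sp)$ and $r = 1$ converts the previous inequality into
\begin{equation*}
\|u\|_{(p^{*}_{s},\,p)}^{\,p} \;\leq\; C\, \|u\|_{s,p}^{\,p},
\end{equation*}
which is precisely the claimed embedding.

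The main technical point I expect to have to handle carefully is the duality step. Since $L^{N/(sp),\infty}$ is not the Banach-space dual of $L^{N/(N-sp),1}$ but only its associate space, the identification $\|f\|_{(q',1)} \simeq \sup\bigl\{\int fw\,\dx : \|w\|_{(q,\infty)} \leq 1\bigr\}$ may hold only up to a multiplicative constant depending on whether one normalises via $f^{*}$ or $f^{**}$. This affects the final constant but not the embedding itself. Aside from this bookkeeping issue, the proof reduces to a clean application of Lorentz duality to the fractional Maz'ya--Hardy inequality already at our disposal.
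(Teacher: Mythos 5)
Your proposal is correct, but the route is genuinely different from the paper's. The paper's proof first applies Theorem \ref{H1} to the pair $(w^\star,u^\star)$, then invokes the P\'olya--Szeg\"o inequality $\norm{u^\star}_{s,p}\leq\norm{u}_{s,p}$, and finally \emph{specialises to a single extremal weight} $w(x)=\omega_N^{-sp/N}|x|^{-sp}$, whose rearrangement $w^*(t)=t^{-sp/N}$ makes the left-hand side $\int_0^\infty t^{-sp/N}u^*(t)^p\,\mathrm{d}t$ coincide on the nose with the Lorentz quasi-norm $|u|_{(p_s^*,p)}^p$. You instead treat \emph{all} weights in $L^{N/(sp),\infty}(\R^N)$ simultaneously: chaining Theorem \ref{H1} with Proposition \ref{Lorentz space embedding} gives $\int_{\R^N}|w|\,|u|^p\dx\leq C\|w\|_{(N/(sp),\infty)}\|u\|_{s,p}^p$, and then Lorentz--K\"othe duality (the associate space of $L^{N/(sp),\infty}$ being $L^{N/(N-sp),1}$) plus the rearrangement identity $(|u|^p)^*=(u^*)^p$ recover $\|u\|_{(p_s^*,p)}^p\lesssim\|u\|_{s,p}^p$. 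Your argument is a clean ``black-box'' once the embedding $L^{N/(sp),\infty}\hookrightarrow\mathcal{H}_{s,p}$ is in place, and it explains conceptually \emph{why} the target exponent is $(p_s^*,p)$: it is forced by duality. The paper's argument is more self-contained (it uses P\'olya--Szeg\"o directly rather than through Proposition \ref{Lorentz space embedding}) and sidesteps the $f^*$ versus $f^{**}$ bookkeeping that you correctly flag, since it never needs the associate-norm characterisation. One small caution: the identity $\||u|^p\|_{(q,r)}=\|u\|_{(pq,pr)}^p$ as you state it holds exactly only for the $f^*$-based quasi-norms $|\cdot|_{(q,r)}$; with the $f^{**}$-based norms it is an equivalence, valid here because both first indices exceed $1$ (as $sp<N$). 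That does not affect the conclusion, as you note.
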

\begin{proof}
 Let $w \in \mathcal{H}_{s,p}(\mathbb{R}^{N})$ be such that $w^{\star} \in \mathcal{H}_{s,p}(\mathbb{R}^{N}).$ Then \tk{using Theorem \ref{H1}} and the P\'olya-Szeg\"o inequality \cite[Theorem 9.2]{Almgren}, we have
\begin{eqnarray*}
\int_{\mathbb{R}^{N}} w^{\star} |u^{\star}|^{p} \dx &\leq & C \|w^{\star}\|_{\mathcal{H}_{s,p}} \int_{\mathbb{R}^{N} \times \mathbb{R}^{N}} \dfrac{|u^{\star}(x) - u^{\star}(y)|^{p}}{|x-y|^{N+sp}} \dxy \\ 
 &\leq & C \|w^{\star}\|_{\mathcal{H}_{s,p}} \int_{\mathbb{R}^{N} \times \mathbb{R}^{N}} \dfrac{|u(x) - u(y)|^{p}}{|x-y|^{N+sp}} \dxy,\ \ \ \forall u \in \D^{s,p}(\mathbb{R}^{N}).
\end{eqnarray*}
In Particular, for $w(x) = \dfrac{1}{\omega_{N}^{\frac{sp}{N}} |x|^{sp}},~w^{*}(t) = \dfrac{1}{t^{\frac{sp}{N}}},~\mbox{and}~ \|w^{\star}\|_{\mathcal{H}_{s,p}} \leq C(N,p,s).$ Also we have $$\int_{\mathbb{R}^{N}} w^{\star} |u^{\star}|^{p} \dx = \int_{0}^{\infty} w^{*}(t) |u^{*}(t)|^{p} \mathrm{d}t.$$ Thus, from the above inequality we have, 
\begin{eqnarray*}
 \int_{0}^{\infty} \dfrac{1}{t^{\frac{sp}{N}}} |u^{*}(t)|^{p} \mathrm{d}t&=& \int_{0}^{\infty} w^{*}(t) |u^{*}(t)|^{p} \mathrm{d}t = \int_{\mathbb{R}^{N}} w^{\star} |u^{\star}|^{p} \dx\\
 &\leq& C(N,p,s) \int_{\mathbb{R}^{N} \times \mathbb{R}^{N}} \dfrac{|u(x) - u(y)|^{p}}{|x-y|^{N+sp}} \dxy,\ \ \  \forall u \in \D^{s,p}(\mathbb{R}^{N}).
\end{eqnarray*}
 The left-hand side of the above inequality is $|u|^{p}_{(p^{*}_{s},p)}$, a quasi-norm equivalent to the norm $\|u\|^{p}_{(p^{*}_{s},p)}$ in $L^{p^{*}_{s},p}(\mathbb{R}^{N})$. This completes the proof.
\end{proof}

\subsection{Concentration compactness}
Let $\mathbb{M} (\R^N)$ be the space of all regular, finite,  Borel-signed measures on $\R^N.$ Then $\mathbb{M} (\R^N)$ is a Banach space with respect to the norm $\norm{\mu}=|\mu|(\R^N)$ (total variation of the measure $\mu$). By Riesz representation theorem, we know that $\mathbb{M}(\R^N)$ is the dual of $C_0(\R^N)$ (= $\overline{C_c(\R^N)}$ in $L^{\infty}(\R^N)$) \cite[Theorem 14.14, Chapter 14]{Border}. The next proposition follows from the uniqueness part of the Riesz representation theorem.
\begin{proposition} \label{defmeasure}
 Let $\mu \in \mathbb{M}(\R^N)$ be a positive measure. Then for an open $V \subseteq \R^N$,
 \[ \mu(V)= \sup \left \{ \int_{\R^N} \phi \, \dm : 0 \leq \phi \leq 1, \phi \in C_c^{\infty}(\R^N) \ with \ supp(\phi) \subseteq V   \right \}\]
and for any Borel set $E \subseteq \R^N$, $\mu(E):= \inf \big\{ \mu(V) : E \subseteq V \ \mbox{and} \ V \text {is open } \big\}$.
\end{proposition}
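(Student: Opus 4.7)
The plan is to deduce both identities from the uniqueness clause in the Riesz representation theorem together with a small mollification argument. Specifically, define the positive linear functional $\Lambda \colon C_c(\mathbb{R}^N) \to \mathbb{R}$ by $\Lambda(\phi) = \int_{\mathbb{R}^N} \phi \, \dm$. Since $\mu$ is a positive element of $\mathbb{M}(\mathbb{R}^N)$, $\Lambda$ is well-defined, linear, and positive. The Riesz representation theorem (in the version for positive functionals on $C_c$, e.g.\ Rudin's \emph{Real and Complex Analysis}, Theorem 2.14) associates to $\Lambda$ a unique regular Borel measure $\nu$ such that $\Lambda(\phi) = \int_{\mathbb{R}^N} \phi \, \mathrm{d}\nu$ for all $\phi \in C_c(\mathbb{R}^N)$, and the construction of $\nu$ yields explicitly, for every open $V \subseteq \mathbb{R}^N$,
\[
\nu(V) = \sup\Bigl\{ \Lambda(\phi) : \phi \in C_c(\mathbb{R}^N),\; 0 \leq \phi \leq 1,\; \mathrm{Supp}(\phi) \subseteq V \Bigr\},
\]
and, for every Borel set $E$, $\nu(E) = \inf\{\nu(V) : E \subseteq V,\ V \text{ open}\}$.

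The next step is to observe that since $\mu$ itself is regular Borel and represents $\Lambda$ by construction, the uniqueness part of Riesz forces $\mu = \nu$. Consequently both displayed formulas hold verbatim with $\mu$ in place of $\nu$, provided we use $C_c$ rather than $C_c^\infty$ test functions; the outer-regularity identity is thus already established.

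The remaining task is to upgrade the $C_c$ supremum in the first formula to a $C_c^\infty$ supremum. One inequality is immediate, because $C_c^\infty \subset C_c$. For the other direction, given any $\phi \in C_c(\mathbb{R}^N)$ with $0 \leq \phi \leq 1$ and $K := \mathrm{Supp}(\phi) \subseteq V$, set $\delta = \tfrac{1}{2}\mathrm{dist}(K, V^c) > 0$ (use $\delta = 1$ if $V = \mathbb{R}^N$) and let $\rho_\varepsilon$ be a standard mollifier. For $\varepsilon < \delta$ the function $\phi_\varepsilon := \phi \ast \rho_\varepsilon$ lies in $C_c^\infty(\mathbb{R}^N)$, satisfies $0 \leq \phi_\varepsilon \leq 1$, and has $\mathrm{Supp}(\phi_\varepsilon) \subseteq V$. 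Moreover $\phi_\varepsilon \to \phi$ uniformly, and since $\mu$ is a finite measure, dominated convergence yields $\int \phi_\varepsilon \, \dm \to \int \phi \, \dm$. Taking suprema shows the two suprema coincide, which completes the proof.

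The main conceptual point to be careful with is that Riesz's theorem delivers a \emph{regular} Borel measure representing $\Lambda$, and the hypothesis that $\mu \in \mathbb{M}(\mathbb{R}^N)$ is already regular (finite, hence inner and outer regular) is exactly what makes uniqueness applicable; without regularity of $\mu$ one could only conclude agreement on integrals against $C_c$, not on measures of sets. The mollification step is routine, the only subtlety being the shrinkage of the support parameter $\varepsilon$ to keep $\mathrm{Supp}(\phi_\varepsilon) \subseteq V$.
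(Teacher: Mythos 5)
Your proof is correct and matches the approach the paper indicates (it offers no written proof, only the remark that the proposition ``follows from the uniqueness part of the Riesz representation theorem''). You flesh out exactly that route: construct the Riesz measure $\nu$ with the explicit sup/inf formulas, invoke the fact that $\mu \in \mathbb{M}(\mathbb{R}^N)$ is regular so uniqueness forces $\mu = \nu$, and then mollify to pass from $C_c$ to $C_c^\infty$ test functions. The mollification step is genuinely needed (the Rudin construction uses $C_c$, not $C_c^\infty$) and you handle it cleanly, including the shrinkage of $\varepsilon$ to keep the support inside $V$. One could alternatively prove the first identity more directly from inner regularity plus Urysohn's lemma (for compact $K \subset V$ pick $\phi$ with $\chi_K \le \phi \le 1$ supported in $V$, giving $\mu(K) \le \int \phi\, \dm \le \mu(V)$, then take the supremum over $K$), which bypasses the full Riesz machinery, but both arguments are sound and yours is the one the paper has in mind.
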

A sequence $(\mu_n)$ is said to be \tk{weak*} convergent to $\mu$ in $\mathbb{M}(\R^N)$, if
 \begin{eqnarray*}
  \int_{\R^N} \phi \, \dm_n \ra  \int_{\R^N} \phi\, \dm, \ as \ n \ra \infty,  \forall\,  \phi \in C_0(\R^N).
 \end{eqnarray*}
 In this case, we denote $\mu_n \wrastar \mu$. The following proposition is a consequence of the Banach-Alaoglu theorem \cite[Chapter 5, Section 3]{Conway}, which states that for any normed linear space $X$, the closed unit ball in  $X^*$ is \tk{weak*} compact.
\begin{proposition} \label{BanachAlaoglu}
 Let $(\mu_n)$ be a bounded sequence in $\mathbb{M}(\R^N)$, then there exists $\mu \in \mathbb{M}(\R^N)$ such that $\mu_n \overset{\ast}{\rightharpoonup} \mu$ up to a subsequence.
\end{proposition}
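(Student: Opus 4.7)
The plan is to identify $\mathbb{M}(\R^N)$ with the dual of $\mathrm{C}_0(\R^N)$ via the Riesz representation theorem (as the paper already records), and then deduce weak* sequential precompactness of bounded sets from the Banach–Alaoglu theorem together with separability of the predual. Concretely, I would proceed as follows.

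First, I would observe that $\mathrm{C}_0(\R^N)$ is separable. One clean way is to note that $\R^N$ is a locally compact, second-countable Hausdorff space, so by the Stone–Weierstrass theorem (or by a direct construction using a countable partition of unity subordinate to an exhaustion by compacts $K_j \nearrow \R^N$ and polynomials with rational coefficients restricted to each $K_j$), $\mathrm{C}_0(\R^N)$ admits a countable dense subset in the supremum norm. This separability is the crucial input that upgrades Banach–Alaoglu from net-compactness to sequential compactness.

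Next, let $(\mu_n) \subset \mathbb{M}(\R^N)$ be bounded, say $\|\mu_n\| \leq M$ for all $n$. By Banach–Alaoglu the closed ball $B_M \subset \mathbb{M}(\R^N) = \mathrm{C}_0(\R^N)^*$ is weak* compact. Because $\mathrm{C}_0(\R^N)$ is separable, the weak* topology restricted to $B_M$ is metrizable, and hence weak* compactness on $B_M$ coincides with weak* sequential compactness. Therefore a subsequence $(\mu_{n_k})$ converges in the weak* sense to some $\mu \in B_M \subset \mathbb{M}(\R^N)$, i.e.\
\[
\int_{\R^N} \phi \,\diff \mu_{n_k} \longrightarrow \int_{\R^N} \phi \,\diff \mu \quad \text{for every } \phi \in \mathrm{C}_0(\R^N),
\]
which is exactly $\mu_{n_k} \wrastar \mu$.

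The only step requiring care is the metrizability argument; an alternative, if one prefers to avoid invoking it, is a direct Cantor diagonal extraction: fix a countable dense set $\{\phi_j\}_{j\geq 1} \subset \mathrm{C}_0(\R^N)$, use boundedness $|\int \phi_j \diff\mu_n| \leq M\|\phi_j\|_\infty$ to extract successive subsequences along which $\int \phi_j \diff\mu_n$ converges for each $j$, diagonalize, and then use the density of $\{\phi_j\}$ together with the uniform bound $\|\mu_n\|\leq M$ to show that the limit functional $L(\phi) := \lim_k \int \phi \diff\mu_{n_k}$ is well-defined, linear, and continuous on all of $\mathrm{C}_0(\R^N)$. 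Applying the Riesz representation theorem to $L$ produces the desired $\mu \in \mathbb{M}(\R^N)$, and the construction automatically gives $\mu_{n_k} \wrastar \mu$. Either route is brief; the main (and essentially the only) subtlety is the separability of $\mathrm{C}_0(\R^N)$, without which a weak* limit point need not be attained along a sequence.
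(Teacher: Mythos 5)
Your proof is correct and follows essentially the same route as the paper: identify $\mathbb{M}(\R^N)$ with $\mathrm{C}_0(\R^N)^*$ via Riesz representation, then invoke Banach--Alaoglu. The one substantive difference is that you explicitly supply the separability of $\mathrm{C}_0(\R^N)$ (and hence metrizability of the weak* topology on bounded sets, or alternatively a Cantor diagonal extraction), which is genuinely needed to upgrade Banach--Alaoglu from weak* compactness to weak* \emph{sequential} compactness. The paper's own one-line proof simply says ``the proof follows from the Banach--Alaoglu theorem'' without flagging this step; your version fills that gap cleanly and is, if anything, more complete than what appears in the paper.
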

\begin{proof}
 Recall that, if $X=C_0(\R^N)$ then by Riesz Representation theorem \cite[Theorem 14.14, Chapter 14]{Border} $X^*=\mathbb{M}(\R^N)$. Thus, the proof follows from the Banach-Alaoglu theorem \cite[Chapter 5, Section 3]{Conway}. 
\end{proof} 
For $u_n, u\in {\mathcal{D}^{s,p}(\R^N)}$, \tk{$w \in\mathcal{H}_{s,p}(\mathbb{R}^N)$} and a Borel set  $ E $ in   $\R^N,$ we  denote 
\begin{align*}
\nu_n(E)=\int_E w|u_n - u|^p \, \dx \,, &\,  \quad \Ga_n(E)=\int_E|D^s (u_n-u)|^p \, \dx \,  \\ 
\widetilde{\Ga}_n(E) &=\int_E|D^s u_n|^p \, \dx \,.
 \end{align*}
If $u_n\wra u$ in $\mathcal{D}^{s,p}(\R^N)$, then $\nu_n$, $\Ga_n$ and $\widetilde{\Ga}_n$ 
  have \tk{weak*} convergent sub-sequences (Proposition \ref{BanachAlaoglu}) in $\mathbb{M}(\R^N)$. Let
   \[\nu_n \overset{\ast}{\rightharpoonup} \nu \,, \qquad \Gamma_n \overset{\ast}{\rightharpoonup} \Gamma \,, \qquad \widetilde{\Gamma}_n \overset{\ast}{\rightharpoonup} \widetilde{\Gamma} \  \mbox{ in } \mathbb{M}(\R^N).\] 
We develop a $w$-depended concentration compactness lemma using our concentration function $\C_w$ (see \eqref{cw} for the definition). Our results are the non-local extensions of the results in \cite{Anoop2021compactness}.

The following Lemma is due to \cite[Remark 2.5]{Bonder}.
\begin{lemma} \label{lem: 1}
Let $0<s<1<p<\frac{N}{s}$ and $\phi \in W^{1,\infty}(\R^N)$ with compact support.
Let $u_n \wra u$ in $\mathcal{D}^{s,p}(\R^N)$. Then
\begin{align*}
    \displaystyle \lim_{n\ra \infty} \int_{\R^N} |u_n(x)-u(x)|^p |D^s \phi(x)|^p \ \dx = 0 \,.
\end{align*}
\end{lemma}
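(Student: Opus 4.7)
The plan is to exploit the two-sided control on $|D^s\phi|^p$ provided by Remark \ref{infestimate}, namely $|D^s\phi(x)|^p \leq C\min\{1, |x|^{-(N+sp)}\}$, which tells us that $|D^s\phi|^p$ is both bounded and has power decay at infinity. Setting $v_n := u_n - u$, so that $v_n \rightharpoonup 0$ in $\mathcal{D}^{s,p}(\R^N)$, we split
\[
\int_{\R^N} |v_n(x)|^p |D^s\phi(x)|^p \, \dx = \int_{B_R} |v_n|^p |D^s\phi|^p \, \dx + \int_{B_R^c} |v_n|^p |D^s\phi|^p \, \dx
\]
for a radius $R>0$ to be chosen, and handle the two pieces by completely different mechanisms: local compactness on the ball, and the decay of $|D^s\phi|^p$ at infinity combined with the Sobolev embedding on the complement.

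For the local piece, I would use that $(v_n)$ is bounded in $\mathcal{D}^{s,p}(\R^N)$ and therefore bounded in $W^{s,p}(B_R)$, so the fractional Rellich--Kondrachov compact embedding $W^{s,p}(B_R) \hookrightarrow\hookrightarrow L^p(B_R)$ yields $v_n \to 0$ strongly in $L^p(B_R)$ (a standard subsequence-of-every-subsequence argument upgrades subsequential convergence to convergence of the full sequence, since the weak limit in $\mathcal{D}^{s,p}$ is $0$). Combined with the $L^\infty$ bound $|D^s\phi|^p \leq C$, this gives
\[
\int_{B_R} |v_n|^p |D^s\phi|^p \, \dx \leq C \int_{B_R} |v_n|^p \, \dx \to 0 \quad \text{as } n\to \infty.
\]

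For the tail, I would apply H\"older's inequality with exponents $p_s^*/p$ and its conjugate $N/(sp)$:
\[
\int_{B_R^c} |v_n|^p |D^s\phi|^p \, \dx \leq \left(\int_{\R^N} |v_n|^{p_s^*} \, \dx\right)^{p/p_s^*} \left(\int_{B_R^c} |D^s\phi|^{N/s} \, \dx\right)^{sp/N}.
\]
The first factor is uniformly bounded in $n$ by the fractional Sobolev embedding $\mathcal{D}^{s,p}(\R^N) \hookrightarrow L^{p_s^*}(\R^N)$. For the second factor, the decay estimate gives $|D^s\phi(x)|^{N/s} \leq C|x|^{-N(N+sp)/(sp)}$ for $|x|$ large, and since $N(N+sp)/(sp) > N$, the tail integral is finite and goes to $0$ as $R \to \infty$. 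Given $\epsilon > 0$, choose $R$ large so that the tail contribution is bounded by $\epsilon/2$ uniformly in $n$, and then choose $n$ large so that the local piece is bounded by $\epsilon/2$.

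The main conceptual difficulty is that one cannot directly appeal to local compactness on all of $\R^N$, since $v_n$ need not decay at infinity, so the decay of $|D^s\phi|^p$ must carry the argument on $B_R^c$; the precise integrability exponent $N/s$ arising from pairing the Sobolev critical exponent with $p$ in H\"older is exactly what makes the decay estimate from Remark \ref{infestimate} sufficient, and verifying this compatibility is the only place where the hypothesis $sp<N$ is genuinely used.
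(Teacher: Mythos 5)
Your argument is correct and self-contained; note that the paper does not prove this lemma itself but simply cites it to \cite[Remark 2.5]{Bonder}, so you have reconstructed the proof rather than having anything in the paper to compare against. The strategy -- splitting into $B_R$ and $B_R^c$, using the fractional Rellich--Kondrachov compactness $W^{s,p}(B_R)\hookrightarrow\hookrightarrow L^p(B_R)$ with the $L^\infty$-bound on $|D^s\phi|^p$ for the local piece, and H\"older with exponents $N/(N-sp)$ and $N/(sp)$ together with the decay estimate of Remark \ref{infestimate} for the tail -- is the natural one and almost certainly what the cited reference does, and the subsequence argument correctly upgrades the local strong convergence to the full sequence since the weak limit in $\mathcal{D}^{s,p}(\R^N)$ (hence in $L^{p^*_s}$, hence in $L^p(B_R)$) is already identified as $0$.

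One small imprecision in your closing remark: the inequality $N(N+sp)/(sp) > N$ is equivalent to $N>0$ and so holds regardless of whether $sp<N$; the hypothesis $sp<N$ is actually used earlier, namely to make $p_s^*=Np/(N-sp)$ a valid finite Sobolev exponent (so that $\mathcal{D}^{s,p}(\R^N)\hookrightarrow L^{p_s^*}(\R^N)$ gives uniform control of $\|v_n\|_{p_s^*}$) and to ensure $N/(sp)>1$ so that the H\"older split is legitimate. The decay of $|D^s\phi|$ is actually more than enough for the tail integral to be finite; the condition $sp<N$ sits in the Sobolev side of the argument rather than in the integrability of the weight.
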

\begin{remark} \rm
Lemma \ref{lem: 1} also holds if we replace $\phi$ with  $\psi \in C_b^{\infty}(\R^N)$ with $0 \leq \psi \leq 1$, $\psi = 0$ on $B_1(0)$ and $\psi =1$ on $B_2(0)^c$. 
\end{remark}
\begin{corollary} \label{passing_limit}
Let  {$0<s<1<p<\frac{N}{s}$ and} $u_n \wra u$ in $\mathcal{D}^{s,p}(\R^N)$. Let $\phi \in W^{1,\infty}(\R^N)$ with compact support or $\phi \in C_b^{\infty}(\R^N)$ with $0 \leq \phi \leq 1$, $\phi = 0$ on $B_1(0)$ and $\phi =1$ on $B_2(0)^c$. Then, for $v_n=(u_n-u)\phi$, we have
\begin{align*}
 \tk{\overline{\lim_{n \ra \infty}}} \int_{\R^N \times \R^N} \frac{|v_n(x)-v_n(y)|^p}{|x-y|^{N+sp}} \dxy \leq  \tk{\overline{\lim_{n \ra \infty}}} \int_{\R^N \times \R^N} |\phi(y)|^p\frac{|(u_n-u)(x)-(u_n-u)(y)|^p}{|x-y|^{N+sp}} \dxy  \,, 
\end{align*}
\end{corollary}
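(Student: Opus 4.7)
The plan is to decompose the difference $v_n(x)-v_n(y)$ by the standard ``product-rule'' trick, apply Minkowski's inequality in the weighted $L^p$ space, and then kill the error term using Lemma \ref{lem: 1}. Concretely, I would write
\begin{align*}
v_n(x)-v_n(y) \;=\; \phi(y)\bigl[(u_n-u)(x)-(u_n-u)(y)\bigr] \;+\; (u_n-u)(x)\bigl[\phi(x)-\phi(y)\bigr],
\end{align*}
which is a direct algebraic identity (add and subtract $\phi(y)(u_n-u)(x)$).

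Next, viewing the kernel $|x-y|^{-(N+sp)}\,\dxy$ as a measure on $\R^N\times\R^N$, I apply Minkowski's inequality in $L^p$ of that measure to the decomposition above. This yields
\begin{align*}
\left(\int_{\R^N\times\R^N}\frac{|v_n(x)-v_n(y)|^p}{|x-y|^{N+sp}}\dxy\right)^{1/p}
&\leq \left(\int_{\R^N\times\R^N}|\phi(y)|^p\frac{|(u_n-u)(x)-(u_n-u)(y)|^p}{|x-y|^{N+sp}}\dxy\right)^{1/p} \\
&\quad + \left(\int_{\R^N\times\R^N}|(u_n-u)(x)|^p\frac{|\phi(x)-\phi(y)|^p}{|x-y|^{N+sp}}\dxy\right)^{1/p}.
\end{align*}
By Fubini, the last double integral equals $\int_{\R^N}|(u_n-u)(x)|^p|D^s\phi(x)|^p\dx$. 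In the first case $\phi\in W^{1,\infty}(\R^N)$ has compact support, so Lemma \ref{lem: 1} applies directly and this quantity tends to $0$. In the second case, writing $\phi = 1-\psi$ with $\psi$ compactly supported in $B_2(0)$ (as in Remark \ref{infestimate}) gives $D^s\phi = D^s\psi$, and the remark following Lemma \ref{lem: 1} again forces the error term to $0$.

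Passing to $\limsup_{n\to\infty}$ and then raising to the $p$-th power (both sides being nonnegative and the error term vanishing) delivers the claimed inequality; under the hypothesis that the limits exist, $\limsup$ may be replaced by $\lim$. The only nontrivial ingredient is the vanishing of the commutator term, which is supplied by Lemma \ref{lem: 1} and its remark; everything else is algebraic manipulation plus Minkowski. Hence no essential new obstacle arises beyond verifying that the two classes of $\phi$ considered fall within the scope of Lemma \ref{lem: 1}, which is exactly the purpose of Remark \ref{infestimate}.
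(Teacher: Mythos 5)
Your proof is correct and follows what is evidently the intended route: the algebraic decomposition $v_n(x)-v_n(y)=\phi(y)\bigl[(u_n-u)(x)-(u_n-u)(y)\bigr]+(u_n-u)(x)\bigl[\phi(x)-\phi(y)\bigr]$, Minkowski's inequality with respect to the measure $|x-y|^{-(N+sp)}\dxy$, Fubini to recognize the remainder as $\int_{\R^N}|(u_n-u)(x)|^p|D^s\phi(x)|^p\dx$, and Lemma~\ref{lem: 1} together with the remark after it to kill the remainder in both cases for $\phi$. This is precisely why the statement is labeled a corollary of Lemma~\ref{lem: 1}, and your argument supplies the (unstated) details faithfully.
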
 
\noi Next, we prove the absolute continuity of $\nu$ with respect to $\Gamma$. 
 \begin{lemma}\label{mlc1}
 Let  {$0<s<1<p<\frac{N}{s}$. Assume that} $w \in \mathcal{H}_{s,p}(\R^N) $, $w \geq 0$ and $u_n \wra u$ in $ \mathcal{D}^{s,p}(\R^N)$. Then for any Borel set $E$ in $\R^N$,
 \begin{align*}
     \nu(E) \leq C_{H} \C^*_w \Gamma (E),
 \end{align*}
  where $\C^*_w = \displaystyle\sup_{x \in \R^N} \C_w(x)$ and $C_H>0$ is as in Theorem \ref{H1}.
  \end{lemma}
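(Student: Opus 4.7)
The plan is to first establish a localized version of the inequality on small balls by combining Theorem \ref{H1} with Corollary \ref{passing_limit}, and then reassemble the local estimates into the desired global measure inequality via a countable disjoint Borel decomposition of $\R^N$ adapted to the concentration function $\mathcal{C}_w$. The engine of the proof is Corollary \ref{passing_limit}, which controls the seminorm of a cutoff product $(u_n-u)\phi$ by an integral against the density $|D^s(u_n-u)|^p$ weighted by $|\phi|^p$; this integral is precisely the pairing of $|\phi|^p$ with $\Gamma_n$ and, in the limit, with $\Gamma$.

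Fix $x_0 \in \R^N$, $r>0$, and $\phi \in C_c^{\infty}(B_r(x_0))$. Because the Maz'ya-type norm is monotone under pointwise restriction of $w$, we have $w\chi_{B_r(x_0)} \in \mathcal{H}_{s,p}(\R^N)$ with norm at most $\|w\|_{\mathcal{H}_{s,p}(\R^N)}$. Applying Theorem \ref{H1} to the function $(u_n-u)\phi \in \mathcal{D}^{s,p}(\R^N)$, which is supported in $B_r(x_0)$, gives
\begin{align*}
\int_{\R^N} |\phi|^p w |u_n-u|^p \dx
&= \int_{\R^N} \bigl(w\chi_{B_r(x_0)}\bigr) |(u_n-u)\phi|^p \dx \\
&\leq C_H \,\|w\chi_{B_r(x_0)}\|_{\mathcal{H}_{s,p}(\R^N)} \,\|(u_n-u)\phi\|_{s,p}^p.
\end{align*}
As $n\to\infty$, the left side converges to $\int_{\R^N} |\phi|^p \,\mathrm{d}\nu$ since $|\phi|^p \in C_0(\R^N)$ and $\nu_n \wrastar \nu$, while Corollary \ref{passing_limit} together with $\Gamma_n \wrastar \Gamma$ yields
$$\limsup_{n\to\infty} \|(u_n-u)\phi\|_{s,p}^p \leq \lim_{n\to\infty} \int_{\R^N} |\phi(y)|^p |D^s(u_n-u)(y)|^p \dy = \int_{\R^N} |\phi|^p \,\mathrm{d}\Gamma.$$
Given $\epsilon>0$, the definition of $\mathcal{C}_w(x_0)$ lets me choose $r = r(x_0)>0$ small enough that $\|w\chi_{B_{r(x_0)}(x_0)}\|_{\mathcal{H}_{s,p}(\R^N)} \leq \mathcal{C}_w(x_0)+\epsilon \leq \mathcal{C}_w^*+\epsilon$, so that
$$\int_{\R^N} |\phi|^p \,\mathrm{d}\nu \leq C_H(\mathcal{C}_w^*+\epsilon) \int_{\R^N} |\phi|^p \,\mathrm{d}\Gamma \qquad \text{for all } \phi \in C_c^{\infty}(B_{r(x_0)}(x_0)).$$

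To upgrade this test-function inequality to a measure inequality, a short density argument works: for any $\psi \in C_c^{+}(B_{r(x_0)}(x_0))$, standard mollifications $\phi_\eta$ of $\psi^{1/p}$ belong to $C_c^{\infty}(B_{r(x_0)}(x_0))$ with $\phi_\eta \to \psi^{1/p}$ uniformly, hence $\phi_\eta^p \to \psi$ uniformly, and both integrals pass to the limit because $\nu$ and $\Gamma$ are finite. This yields the restricted measure inequality $\nu|_{B_{r(x_0)}(x_0)} \leq C_H(\mathcal{C}_w^*+\epsilon)\,\Gamma|_{B_{r(x_0)}(x_0)}$. Covering $\R^N$ by the family $\{B_{r(x)}(x)\}_{x \in \R^N}$, extracting a countable subcover $\{B_j\}_{j \in \mathbb{N}}$ by the Lindel\"of property, and forming the disjoint Borel decomposition $A_j := B_j \setminus \bigcup_{k<j} B_k$, I obtain for every Borel set $E \subset \R^N$
$$\nu(E) = \sum_{j} \nu(E \cap A_j) \leq C_H(\mathcal{C}_w^*+\epsilon) \sum_{j} \Gamma(E \cap A_j) = C_H(\mathcal{C}_w^*+\epsilon)\,\Gamma(E),$$
and sending $\epsilon \to 0$ finishes the proof. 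The chief subtlety is preserving the optimal constant $\mathcal{C}_w^*$ through the assembly: a naive partition-of-unity approach combined with the convexity of $|\cdot|^p$ would introduce a combinatorial factor, and this is precisely what is avoided by using a disjoint Borel decomposition together with the local restriction $w\chi_{B_r(x_0)}$ of the weight in Theorem \ref{H1}.
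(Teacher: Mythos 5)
Your proof is correct, but it reaches the conclusion by a genuinely different route from the paper. The paper first establishes the crude test-function inequality with $\|w\|_{\mathcal{H}_{s,p}(\R^N)}$ as the constant, concludes $\nu \ll \Gamma$, and then invokes the Radon--Nikodym theorem together with the Lebesgue--Besicovitch differentiation theorem for Radon measures to compute the density $\frac{\mathrm{d}\nu}{\mathrm{d}\Gamma}(x) = \lim_{r\to 0}\nu(B_r(x))/\Gamma(B_r(x))$; it then localizes the weight to $w\chi_{B_r(x)}$ to bound this density pointwise by $C_H\,\mathcal{C}_w(x)$ and integrates. You instead localize first, choosing at each $x_0$ a radius $r(x_0)$ so small that $\|w\chi_{B_{r(x_0)}(x_0)}\|_{\mathcal{H}_{s,p}(\R^N)} \leq \mathcal{C}_w^* + \epsilon$, upgrade the test-function bound on each ball to a restricted measure inequality by mollification and regularity of the two finite Borel measures, and then glue via a Lindel\"of subcover and a disjoint Borel decomposition $A_j = B_j \setminus \bigcup_{k<j} B_k$. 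Because the $A_j$ are pairwise disjoint and each $E\cap A_j$ sits inside a single ball, countable additivity of $\nu$ and $\Gamma$ reassembles the local estimates with no overlap penalty, and letting $\epsilon \to 0$ recovers the optimal constant $C_H\,\mathcal{C}_w^*$. Your argument buys you a more elementary, self-contained proof that avoids the differentiation theorem for general Radon measures (a nontrivial input whose validity hinges on Besicovitch covering considerations), at the mild cost of the explicit covering bookkeeping; the paper's route is shorter on the page but leans on heavier measure-theoretic machinery.
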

  \begin{proof}
As $u_n \wra u$ in $ \mathcal{D}^{s,p}(\R^N)$, $u_n \ra u$ in $L^{p}_{\mathrm{loc}}(\R^N)$.  For $\Phi \in C_c^{\infty}(\R^N)$, $(u_n-u) \Phi \in \D^{s,p}(\R^N)$. Thus, denoting $v_n=(u_n-u)\Phi$, we have
\begin{align*}
  \int_{\R^N} |\Phi|^p \ \mathrm{d} \nu_n  =  \int_{\R^N} w|(u_n-u)\Phi|^p \dx  \leq   C_H \norm{w}_{\mathcal{H}_{s,p}}  \int_{\R^N \times \R^N} \frac{|v_n(x)-v_n(y)|^p}{|x-y|^{N+sp}} \dxy  \,.
\end{align*}
Taking $n \ra \infty$ and using Corollary \ref{passing_limit}, we obtain
 \begin{align} \label{forrmk}
     \int_{\R^N} |\Phi|^p \ \mathrm{d} \nu \leq C_{H} \norm{w}_{\mathcal{H}_{s,p}} \int_{\R^N} |\Phi|^p \ \mathrm{d} \Gamma .
 \end{align}
Now by Proposition \ref{defmeasure}, we get 
  \begin{equation}\label{measureinequality1}
   \nu(E) \leq C_{H} \norm{w}_{\mathcal{H}_{s,p}} \Gamma (E),  \  \forall E \ \text{Borel in} \ \R^N.
  \end{equation}
 In particular, $\nu \ll \Gamma$ and hence by Radon-Nikodym theorem, 
 \begin{equation} \label{measureinequality}
  \nu(E) = \int_E  \frac{\mathrm{d} \nu}{\mathrm{d} \Gamma} \  \mathrm{d}\Gamma \ , \forall E \ \text{Borel  in} \ \R^N. 
 \end{equation}
 Further, by Lebesgue differentiation theorem (page 152-168 of \cite{Federer}), we have 
 \begin{equation} \label{Lebdiff}
  \frac{\mathrm{d} \nu}{\mathrm{d} \Gamma}(x) = \lim_{r \ra 0} \frac{\nu (B_r(x))}{\Gamma (B_r(x))}.
 \end{equation}
 Now replacing $w$ by $w \chi_{B_r(x)}$ and proceeding as before,
 \[ \nu(B_r(x)) \leq C_{H} \norm{w \chi_{B_r(x)}}_{\mathcal{H}_{s,p}} \ \Gamma (B_r(x)).\] 
 Thus from \eqref{Lebdiff} we get 
\begin{eqnarray} \label{21}
 \frac{\mathrm{d} \nu}{\mathrm{d} \Gamma} (x) \leq C_{H} \C_w(x)
\end{eqnarray} 
and hence 
$\norm{\frac{\mathrm{d} \nu}{\mathrm{d} \Gamma}}_{\infty} \leq C_{H} \C^*_w$. Now from \eqref{measureinequality} we obtain $\nu(E) \leq C_{H} \C^*_w \Gamma (E)$ 
for all Borel subsets $E$ of $\R^N$.
\end{proof}    
   
The next Lemma gives a lower estimate for the measure $\tilde{\Gamma}.$ Similar estimate is obtained in Lemma 2.1 of \cite{Smets}. \tk{Let $\sum_w := \{x \in \R^N : C_w(x)>0 \}$}. We will assume that $\overline{\sum_w}$ is of Lebesgue measure $0$.
\begin{lem} \label{lemma9}
Let  {$0<s<1<p<\frac{N}{s}$ and} $w \in \mathcal{H}_{s,p}(\R^N)$ be such that $w \ge 0$ and $|\overline{\sum_w}|=0$. If $u_n \wra u$ in $ \D^{s,p}(\R^N)$, then 
\begin{equation*}
   \tilde{\Gamma} \geq \begin{cases}
     |D^s u|^p + \frac{\nu}{C_{H} \C_w^*}, \quad \text{if} \ \C_w^* \neq 0, \\
     |D^s u|^p, \quad \text{otherwise} \,,
    \end{cases}
\end{equation*}
 where $C_H>0$ is as in Theorem \ref{H1}.
\end{lem}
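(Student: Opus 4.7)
The plan is to first prove the cleaner measure-level identity $\tilde{\Gamma}=\Gamma+|D^s u|^p\,\dx$ via a Br\'ezis--Lieb argument, and then deduce the required lower bound by feeding the Radon--Nikodym estimate of Lemma \ref{mlc1} into this identity.

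Since $u_n\wra u$ in $\mathcal{D}^{s,p}(\R^N)$, by the compact embedding into $L^p_{loc}(\R^N)$ and a diagonal argument I may pass to a subsequence so that $u_n\to u$ a.e.\ in $\R^N$. Introducing
\[
a_n(x,h):=\frac{u_n(x+h)-u_n(x)}{|h|^{(N+sp)/p}},\qquad a(x,h):=\frac{u(x+h)-u(x)}{|h|^{(N+sp)/p}},
\]
we have $a_n\to a$ a.e.\ on $\R^N\times\R^N$ (after a further subsequence via Fubini), and $(a_n)$ is bounded in $L^p(\R^N\times\R^N)$. The Br\'ezis--Lieb lemma applied in $L^p(\R^N\times\R^N)$ yields
\[
\bigl\| \, |a_n|^p-|a_n-a|^p-|a|^p \, \bigr\|_{L^1(\R^N\times\R^N)}\longrightarrow 0.
\]
For any $\Phi\in C_c(\R^N)$, pairing with $\Phi(x)$ (bounded in $x$, constant in $h$) and applying Fubini gives
\[
\int_{\R^N}\Phi\,\mathrm{d}\tilde{\Gamma}_n-\int_{\R^N}\Phi\,\mathrm{d}\Gamma_n\ \longrightarrow\ \int_{\R^N}\Phi(x)|D^s u(x)|^p\,\dx.
\]
Since $\tilde{\Gamma}_n\wrastar\tilde{\Gamma}$ and $\Gamma_n\wrastar\Gamma$, the left-hand side also tends to $\int\Phi\,\mathrm{d}\tilde{\Gamma}-\int\Phi\,\mathrm{d}\Gamma$. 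As this holds for every $\Phi\in C_c(\R^N)$, the Riesz representation theorem identifies $\tilde{\Gamma}=\Gamma+|D^s u|^p\,\dx$ as Borel measures on $\R^N$.

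To conclude, when $\C_w^*\neq 0$ the Radon--Nikodym estimate of Lemma \ref{mlc1} gives $\Gamma\geq\nu/(C_H\C_w^*)$, and substituting into the identity produces $\tilde{\Gamma}\geq|D^s u|^p\,\dx+\nu/(C_H\C_w^*)$. When $\C_w^*=0$, the same lemma forces $\nu\equiv 0$, so the identity reduces to $\tilde{\Gamma}=\Gamma+|D^s u|^p\,\dx\geq|D^s u|^p\,\dx$.

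The main technical obstacle lies in the Br\'ezis--Lieb step at the level of measures: one must upgrade the a.e.\ convergence $u_n\to u$ on $\R^N$ to a.e.\ convergence of $a_n$ on $\R^N\times\R^N$ and then carry the $L^1(\R^N\times\R^N)$ convergence of the Br\'ezis--Lieb remainder through the pairing with an $x$-only test function $\Phi$. The hypothesis $|\overline{\sum_w}|=0$ enters in that the singular support of $\nu$, being contained in $\overline{\sum_w}$, is Lebesgue-null; consequently the lower bound $|D^s u|^p\,\dx+\nu/(C_H\C_w^*)$ splits naturally into mutually singular absolutely continuous and singular components, which makes the estimate sharp and the contributions non-overlapping.
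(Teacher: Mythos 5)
Your proof is correct, and it takes a genuinely different (and cleaner) route than the paper's. The paper proves the lemma in three steps: (Step 1) $\tilde{\Gamma}\geq |D^s u|^p\,\dx$ by weak lower semicontinuity of the convex integrand $\phi(x)|z|^p$; (Step 2) $\tilde{\Gamma}=\Gamma$ on $\overline{\Sigma_w}$, obtained by covering a Borel subset of the null set $\overline{\Sigma_w}$ by small open sets and using the elementary inequality $\bigl||a|^p-|a-b|^p\bigr|\leq\epsilon|a|^p+c(\epsilon,p)|b|^p$; and (Step 3) combining the two one-sided bounds via mutual singularity (the measure $\nu$ lives on $\overline{\Sigma_w}$, while $|D^s u|^p\,\dx$, being absolutely continuous, gives zero mass to the Lebesgue-null set $\overline{\Sigma_w}$). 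By contrast, you bypass all of this by applying the strong ($L^1$) form of Br\'ezis--Lieb to $a_n(x,h)=(u_n(x+h)-u_n(x))/|h|^{(N+sp)/p}$ on the product space $\R^N\times\R^N$, pairing against an $x$-only test function, and obtaining the sharper measure identity $\tilde{\Gamma}=\Gamma+|D^s u|^p\,\dx$, from which the claimed inequality drops out immediately via Lemma \ref{mlc1}. This packages the paper's Steps 1 and 2 into a single equality and is arguably the more natural concentration-compactness statement. The only small caveat is in your closing remark: your argument does not actually use the hypothesis $|\overline{\Sigma_w}|=0$ anywhere (the identity $\tilde{\Gamma}=\Gamma+|D^s u|^p\,\dx$ needs only $u_n\wra u$, and the final step needs only Lemma \ref{mlc1}), so you have in fact proved the lemma under weaker hypotheses; the hypothesis is essential only for the paper's decomposition-by-mutual-singularity route, not for yours.
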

\begin{proof}
Our proof splits into three steps.\\
{\bf Step 1:} $\tilde{\Ga} \geq |D^s u|^p.$ \tk{As $u_n \wra u$ in $ \mathcal{D}^{s,p}(\R^N)$, $u_n \ra u$ a.e. in $\R^N$. Using Fatous lemma, one can see that $|D^s u| \leq \liminf_{n \ra \infty}  |D^s u_n|$ a.e. in $\R^N$. Now,  let $\phi \in C_c^{\infty}(\R^N)$ with $0\leq \phi \leq 1$, we need to show that
$\int_{\R^N} \phi \ \mathrm{d}\tilde{\Ga} \geq \int_{\R^N} \phi |D^s u|^p \, \dx. $
Notice that
\begin{align*}\int_{\R^N} \phi \, \mathrm{d}\tilde{\Ga}= \lim_{n \ra \infty} \int_{\R^N} \phi \, \mathrm{d}\tilde{\Ga}_n  = \lim_{n \ra \infty} \int_{\R^N} \phi |D^s u_n|^p \, \dx \geq \int_{\R^N} \phi |D^s u|^p \, \dx ,\end{align*}
where the last inequality follows from Fatous lemma.}

\noi {\bf Step 2:} $\tilde{\Gamma}=\Gamma$, on $\overline{\sum_w}.$ Let $E\subset\overline{\sum_w}$ be a Borel set. Thus, for each $m \in \N$, there exists an open subset $O_{m}$ containing $E$ such that $|O_m|=|O_{m} \setminus E| < \frac{1}{m}$. Let $\var >0$ be given. Then, for any $\phi \in C_c^{\infty}(O_{m})$ with $0 \leq \phi \leq 1$, we have
\begin{align*}
    &\bigg|\int_{\R^N}  \phi \ \mathrm{d}\Gamma_n \, \dx  - \int_{\R^N}  \phi \ d\tilde{\Gamma}_n \, \dx\bigg| \\ 
    &= \left|\int_{\R^N}  \phi |D^s (u_n-u)|^p \, \dx -\int_{\R^N}  \phi |D^s u_n|^p \, \dx \right| \\
     &\leq  \int_{\R^N \times \R^N} \phi(x) \frac{\big| |u_n(x)-u_n(y)|^p - |(u_n-u)(x)-(u_n-u)(y)|^p \big|}{|x-y|^{N+sp}}\, \dxy \\
    &\leq  \epsilon \int_{\R^N \times \R^N} \phi(x) \frac{ |u_n(x)-u_n(y)|^p}{|x-y|^{N+sp}}\, \dxy  + c(\epsilon,p) \int_{\R^N \times \R^N} \phi(x) \frac{ |u(x)-u(y)|^p}{|x-y|^{N+sp}}\ \dxy \\
    & \leq \epsilon \int_{\R^N \times \R^N} \frac{ |u_n(x)-u_n(y)|^p}{|x-y|^{N+sp}}\, \dxy + c(\epsilon,p) \int_{O_m}  |D^s u|^p \, \dx \,.
\end{align*}
\tk{Now taking $n \rightarrow \infty$, we obtain $
 \left|\int_{\R^N}  \phi \, \mathrm{d}\Gamma  -\int_{\R^N}  \phi \, \mathrm{d}\tilde{\Gamma}  \right|  \leq  \epsilon L + c(\epsilon,p) \int_{O_{m}}   |D^s u|^p \ \dx
$, where $L=\sup_{n \in \N} \{\|u_n\|_{s,p}\} <\infty$.
Therefore,
\begin{eqnarray*} 
\left|\Gamma (O_m)-\tilde{\Gamma} (O_m) \right| \leq \epsilon L + c(\epsilon,p) \int_{O_{m}}   |D^s u|^p \, \dx \,.
\end{eqnarray*}
Thus, as $m \ra \infty,$ $|O_m|\ra 0$ and hence $| \Gamma (E)-\tilde{\Gamma} (E)| \leq \epsilon L $, where $\epsilon>0$ is arbitrary, i.e., $\Gamma(E)=\tilde{\Gamma} (E).$}

\noi {\bf{Step 3:}} $ \tilde{\Gamma} \geq |D^s u|^p + \frac{\nu}{C_{H} \C_w^*},$ if $\C_w^* \neq 0$. Let $\C_w^* \neq 0$. Then from Lemma \ref{mlc1} we have  $ \Gamma \geq \frac{\nu}{C_{H} \C_w^*}$. Furthermore, \eqref{21} and \eqref{measureinequality} ensures that $\nu$ is supported on $\sum_w.$ Hence Step 1 and Step 2 yields the following: 
\begin{equation}\label{rep1}
 \tilde{\Gamma} \geq \left\{\begin{array}{ll}
    |D^s u|^p,  &  \\
     \frac{\nu}{C_{H} \C_w^*} .
\end{array}\right.    
\end{equation}
Since $|\overline{\sum_w}|=0$, the measure $|D^s u|^p$ is supported inside $\overline{\sum_w}^{c}$ and hence from \eqref{rep1} we easily obtain  $\tilde{\Gamma} \geq |D^s u|^p +  \frac{\nu}{C_{H} \C_w^*}.$
\end{proof}
\noi Now we prove the following Lemma.
     \begin{lemma}\label{mlc2}
   Let  {$0<s<1<p<\frac{N}{s}$ and} $w \in \mathcal{H}_{s,p}(\R^N)$, $w \geq 0$ and $u_n \wra u$ in $\mathcal{D}^{s,p}(\R^N)$. Set
   \begin{eqnarray*}
    \nu_{\infty} = \displaystyle\lim_{R \ra \infty} \overline{\lim_{n \ra \infty}}  \nu_n( \overline{B_R}^c) \quad \mbox{and} \quad \Gamma_{\infty} =  \displaystyle\lim_{R \ra \infty} \overline{\lim_{n \ra \infty}}  \Gamma_n( \overline{B_R}^c).
   \end{eqnarray*} 
  Then
\begin{enumerate}[(i)]
 \item $\nu_{\infty} \leq C_{H} \C_w(\infty)  \Gamma_{\infty} \nonumber,$
  \item $ \displaystyle \overline{\lim}_{n \ra \infty} \int_{\R^N} w|u_n|^p\, \dx= \int_{\R^N} w|u|^p\, \dx+ \norm{\nu} + \nu_{\infty}$,
\item[(iii)] Further, if $|\overline{\sum_w}|=0$, then we have

\begin{equation*}
\displaystyle \overline{\lim}_{n \ra \infty} \int_{\R^N} |D^s u_n|^p \dx \geq \begin{cases}
 \int_{\R^N} |D^s u|^p \dx + \frac{\norm{\nu}}{C_H\C_w^*} +  \Gamma_{\infty}, \quad \ \text{if} \ \ \C_w^* \neq 0 \\
 \int_{\R^N} |D^s u|^p \dx  +  \Gamma_{\infty}, \quad \ \text{otherwise} \,,
     \end{cases}
  \end{equation*}
\end{enumerate}
where $C_H>0$ is as in Theorem \ref{H1}.
  \end{lemma}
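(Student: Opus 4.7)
The plan is to prove each of the three parts by splitting $\R^N$ into $B_{2R}$ and its complement, passing $n \to \infty$ to identify the weak-$\ast$ limits of the relevant measures, and then sending $R \to \infty$.

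For (i), I would localize the fractional Hardy inequality near infinity. Take $\psi \in C_b^{\infty}(\R^N)$ with $0 \leq \psi \leq 1$, $\psi \equiv 0$ on $B_1(0)$ and $\psi \equiv 1$ on $B_2(0)^c$ (the cutoff from Remark \ref{infestimate}), and set $\psi_R(x) := \psi(x/R)$. Put $v_n := (u_n-u)\psi_R$. Applying Theorem \ref{H1} to $v_n$ with weight $w\chi_{\overline{B_R}^c}$ yields
\[ \nu_n(\overline{B_{2R}}^c) \;\leq\; \int_{\R^N} w\chi_{\overline{B_R}^c}|v_n|^p \dx \;\leq\; C_H\,\|w\chi_{\overline{B_R}^c}\|_{\mathcal{H}_{s,p}(\R^N)}\,\|v_n\|_{s,p}^p, \]
since $\psi_R \equiv 1$ on $\overline{B_{2R}}^c$. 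To estimate the seminorm, I would use the pointwise identity $v_n(x)-v_n(y) = \psi_R(y)[(u_n-u)(x)-(u_n-u)(y)] + (u_n-u)(x)[\psi_R(x)-\psi_R(y)]$ together with Minkowski's inequality in $L^p(\R^N \times \R^N;|x-y|^{-(N+sp)}\dxy)$: the first piece is dominated by $\Gamma_n(B_R^c)$ (using $|\psi_R|\leq 1$ and $\psi_R\equiv 0$ on $B_R$), while the second equals $\int|u_n-u|^p|D^s\psi_R|^p\dx \to 0$ by Lemma \ref{lem: 1} (applied with $\phi = 1-\psi_R$, which has compact support in $\overline{B_{2R}}$). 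Taking $\overline{\lim}_n$ and then $R \to \infty$ completes (i).

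For (ii), the Br\'ezis-Lieb lemma applied on $(\R^N, w\,\dx)$ to $u_n \to u$ (a.e.\ up to a subsequence, since $u_n\to u$ in $L^p_{\rm loc}(\R^N)$) gives
\[ \lim_{n\to\infty}\left(\int_{\R^N} w|u_n|^p \dx - \nu_n(\R^N)\right) = \int_{\R^N} w|u|^p \dx, \]
so $\overline{\lim}_n \int w|u_n|^p \dx = \int w|u|^p \dx + \overline{\lim}_n \nu_n(\R^N)$. It remains to show $\overline{\lim}_n \nu_n(\R^N) = \|\nu\| + \nu_\infty$. Pick $\phi_R \in C_c(\R^N)$ with $0\leq\phi_R\leq 1$, $\phi_R \equiv 1$ on $B_R$, $\mathrm{supp}(\phi_R) \subset B_{2R}$, and decompose $\nu_n(\R^N) = \int\phi_R\,\mathrm{d}\nu_n + \int(1-\phi_R)\,\mathrm{d}\nu_n$. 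Weak-$\ast$ convergence gives the first term the limit $\int\phi_R\,\mathrm{d}\nu$, which tends to $\|\nu\|$ as $R\to\infty$ by monotone convergence, while $\int(1-\phi_R)\,\mathrm{d}\nu_n \leq \nu_n(\overline{B_R}^c)$ furnishes the $\nu_\infty$ term. The matching lower bound follows from $\nu_n(\R^N) \geq \int\phi_R\,\mathrm{d}\nu_n + \nu_n(\overline{B_{2R}}^c)$.

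For (iii), I would combine Lemma \ref{lemma9} with a localized Br\'ezis-Lieb. Setting $\tilde{\Gamma}_\infty := \lim_R \overline{\lim}_n \tilde{\Gamma}_n(\overline{B_R}^c)$ and mimicking the lower-bound half of (ii) with $\tilde{\Gamma}_n$ in place of $\nu_n$ yields $\overline{\lim}_n \tilde{\Gamma}_n(\R^N) \geq \|\tilde{\Gamma}\| + \tilde{\Gamma}_\infty$. Integrating the pointwise measure inequality of Lemma \ref{lemma9} gives $\|\tilde{\Gamma}\| \geq \int_{\R^N}|D^s u|^p \dx + \|\nu\|/(C_H\mathcal{C}_w^*)$ when $\mathcal{C}_w^* \neq 0$, and $\|\tilde{\Gamma}\| \geq \int|D^s u|^p \dx$ otherwise. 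Finally, to identify $\tilde{\Gamma}_\infty$ with $\Gamma_\infty$, apply the Br\'ezis-Lieb lemma on the product space $(\overline{B_R}^c\times \R^N, \dx\,\mathrm{d}h)$ to $F_n(x,h) := |h|^{-(N+sp)/p}[u_n(x+h)-u_n(x)]$ (which converges a.e.\ by Fubini), producing $\lim_n[\tilde{\Gamma}_n(\overline{B_R}^c) - \Gamma_n(\overline{B_R}^c)] = \int_{\overline{B_R}^c}|D^s u|^p \dx$; passing $\overline{\lim}_n$ then $R\to\infty$ and using $\int_{\overline{B_R}^c}|D^s u|^p \dx \to 0$ delivers $\tilde{\Gamma}_\infty = \Gamma_\infty$. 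Chaining the three ingredients yields (iii). The principal obstacle lies in part (iii), where the three measures $\tilde{\Gamma}$, $\Gamma$, and $\nu$ must be linked cleanly; the crux is the localized Br\'ezis-Lieb on $B_R^c \times \R^N$, which is what lets us transfer information between $\tilde{\Gamma}_n$ and $\Gamma_n$ at infinity. Part (i) is more delicate than its local counterpart because $\psi_R$ does not commute with the fractional seminorm, and the asymptotic decay of $|D^s\psi_R|^p$ from Remark \ref{infestimate} is what rescues the argument.
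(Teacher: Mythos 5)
Your proposal is correct and reaches the same conclusion, but the route diverges from the paper's in a few instructive places. In (i), you use the dilated cutoff $\psi_R(x)=\psi(x/R)$ and estimate $\|v_n\|_{s,p}^p$ directly with Minkowski plus Lemma \ref{lem: 1}; the paper uses a cutoff $\Phi_R$ with a fixed-width transition band $[R,R+1]$ and invokes Corollary \ref{passing_limit} as a black box. These are interchangeable — Corollary \ref{passing_limit} encapsulates essentially the Minkowski-plus-Lemma-\ref{lem: 1} step you carry out by hand. In (ii), you apply Br\'ezis--Lieb once, globally on $(\R^N,w\,\dx)$, to strip off $\int w|u|^p\,\dx$, and then separately establish $\overline{\lim}_n\nu_n(\R^N)=\|\nu\|+\nu_\infty$ by a two-sided cutoff estimate; the paper inserts the cutoff first and applies Br\'ezis--Lieb locally inside the decomposition. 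The two orderings are equivalent. The genuinely novel point is in (iii): the paper links $\tilde\Gamma_n$ and $\Gamma_n$ near infinity via an $\epsilon$-Young-type pointwise inequality $\bigl||a|^p-|a-b|^p\bigr|\le\epsilon|a|^p+c(\epsilon,p)|b|^p$ applied under the integral, whereas you apply Br\'ezis--Lieb on the product space $\overline{B_R}^c\times\R^N$ to the kernel $F_n(x,h)=|h|^{-(N+sp)/p}(u_n(x+h)-u_n(x))$, obtaining directly that $\tilde\Gamma_n(\overline{B_R}^c)-\Gamma_n(\overline{B_R}^c)\to\int_{\overline{B_R}^c}|D^s u|^p\,\dx$ and hence $\tilde\Gamma_\infty=\Gamma_\infty$. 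Your product-space Br\'ezis--Lieb is arguably cleaner: it bypasses the double limit bookkeeping ($n\to\infty$, $R\to\infty$, $\epsilon\to0$) that the paper's $\epsilon$-estimate entails, and which the paper handles somewhat loosely (its intermediate claim $\int\Phi_R\,\mathrm{d}\Gamma_n\to\int\Phi_R\,\mathrm{d}\Gamma$ requires care because $\Phi_R\notin C_0(\R^N)$). One small remark: your (iii) only asserts $\overline{\lim}_n\tilde\Gamma_n(\R^N)\ge\|\tilde\Gamma\|+\tilde\Gamma_\infty$, while the paper obtains equality there; since the target in (iii) is an inequality this weakening is harmless, but if you later needed the equality (as the paper states it in its proof) you would also need the upper bound, which again follows from weak-$\ast$ convergence plus $\int(1-\phi_R)\,\mathrm{d}\tilde\Gamma_n\le\tilde\Gamma_n(\overline{B_R}^c)$.
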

\begin{proof}
(i) For $R>0$, choose $\Phi_R \in C_b^{1}(\R^N)$ satisfying  $0\leq \Phi_R \leq 1$, $\Phi_R = 0 $ on $\overline{B_R}$ and $\Phi_R = 1 $ on $B_{R+1}^c$. Clearly, $v_n:=(u_n-u) \Phi_R \in \mathcal{D}^{s,p}_0( \displaystyle\overline{B_R}^c)$. Since
$\norm{w \chi_{ \overline{B_R}^c}}_{\mathcal{H}_{s,p}} < \infty, $ by Maz'ya's theorem \tk{ and using Corollary \ref{passing_limit} we get}
\begin{align}
& \, \tk{\overline{\lim_{n \ra \infty}}} \int_{\R^N} |\Phi_R|^p \ \mathrm{d}\nu_n  = \tk{\overline{\lim_{n \ra \infty}}} \int_{\R^N} w|(u_n-u)\Phi_R|^p \ \dx  =\tk{\overline{\lim_{n \ra \infty}}} \int_{\overline{B_R}^c} w |v_n|^p \ \dx \nonumber \\  & \leq C_H   \norm{w \chi_{\overline{B_R}^c}}_{\mathcal{H}_{s,p}}  \tk{\overline{\lim_{n \ra \infty}}} \int_{\R^N \times \R^N} \frac{|v_n(x)-v_n(y)|^p}{|x-y|^{N+sp}} \ \dxy \nonumber \\
& \leq C_{H} \norm{w \chi_{\overline{B_R}^c}}_{\mathcal{H}_{s,p}} \tk{\overline{\lim_{n \ra \infty}}}\int_{\R^N \times \R^N} |\Phi_R(x)|^p \frac{|(u_n-u)(x)-(u_n-u)(y)|^p}{|x-y|^{N+sp}} \ \dxy \nonumber \\
&= C_{H} \norm{w \chi_{\overline{B_R}^c}}_{\mathcal{H}_{s,p}} \tk{\overline{\lim_{n \ra \infty}}}\int_{\R^N} |\Phi_R|^p \ \mathrm{d}\Gamma_n. \label{lab: 1} 
\end{align}
Also, notice that 
\begin{align*}
    \nu_n(\overline{B_{R+1}^c}) & \leq \int_{\R^N} |\Phi_R|^p \ \mathrm{d}\nu_n \leq \nu_n(\overline{B_{R}^c}), \, \\
    \Gamma_n(\overline{B_{R+1}^c}) & \leq \int_{\R^N} |\Phi_R|^p \ \mathrm{d}\Gamma_n \leq \Gamma_n(\overline{B_{R}^c}) \,.
\end{align*}
Thus,
\begin{equation} \label{nu infty gamma infty}
    \displaystyle\nu_{\infty}=\lim_{R \ra \infty} \overline{\lim_{n \ra \infty}} \int_{\R^N} |\Phi_R|^p \ \mathrm{d}\nu_n ~~\text{and}~~ \displaystyle\Gamma_{\infty}=\lim_{R \ra \infty} \overline{\lim_{n \ra \infty}}\int_{\R^N} |\Phi_R|^p \ \mathrm{d}\Gamma_n.
\end{equation}
\tk{Consequently, $R \ra \infty$} in \eqref{lab: 1}, we get $\nu_{\infty} \leq C_{H} \C_{w}(\infty) \Gamma_{\infty}.$
 
\noi $(ii)$ By choosing $\Phi_R$ as above and using Br\'ezis-Lieb lemma together with \eqref{nu infty gamma infty} we have
\begin{align*}
 \overline{\lim_{n\ra \infty}} \int_{\R^N} w| u_n|^p \dx
 &= \tk{\lim_{R\ra \infty}} \  \overline{\lim_{n\ra \infty}} \left[ \int_{\R^N} w| u_n|^p (1-\Phi_R)\dx + \int_{\R^N} w| u_n|^p \Phi_R \dx  \right] \\
&=\tk{\lim_{R\ra \infty}} \ \overline{\lim_{n\ra \infty}} \left[  \int_{\R^N} w| u|^p (1-\Phi_R)  \dx  + \int_{\R^N} w| u_n-u|^p (1-\Phi_R) \dx + \int_{\R^N} w| u_n|^p \Phi_R \dx \right]   \\
&= \int_{\R^N} w| u|^p \dx + \norm{\nu} + \nu_{\infty}. 
\end{align*} 
The last equality uses the facts that $\nu_n \overset{\ast}{\rightharpoonup} \nu$, and $\overline\lim_{n \ra \infty} (a_n + b_n) = \lim_{n \ra \infty}
a_n + \overline\lim_{n \ra \infty}
b_n$ for any real sequences $(a_n),(b_n)$ with $(a_n)$ being convergent.

\noi $(iii)$  Notice that 
 \begin{align}
\overline{\lim_{n\ra \infty}} \int_{\R^N} |D^s u_n|^p \dx &= \overline{\lim_{n\ra \infty}} \left[ \int_{\R^N} |D^s u_n|^p (1-\Phi_R) \dx + \int_{\R^N} |D^s u_n|^p \Phi_R \dx\right] \nonumber \\ &= \tk{\int_{\R^N} (1- \Phi_R) \ \mathrm{d}\tilde{\Gamma}}+ \overline{\lim_{n\ra \infty}}  \int_{\R^N}  \Phi_R \ \mathrm{d}\tilde{\Gamma}_n. \label{lab: 2}
  \end{align}
Now for a given $\epsilon>0$ we have, \begin{align*}
 &\,   \left|\int_{\R^N}   \Phi_R \, \mathrm{d}\Gamma_n \, \dx -\int_{\R^N}   \Phi_R \, \mathrm{d}\tilde{\Gamma}_n \, \dx \right| \\ &= \left|\int_{\R^N}   \Phi_R |D^s (u_n-u)|^p \, \dx -\int_{\R^N}   \Phi_R |D^s u_n|^p \, \dx \right| \\
   &\leq  \int_{\R^N \times \R^N} \Phi_R(x) \frac{\big| |u_n(x)-u_n(y)|^p - |(u_n-u)(x)-(u_n-u)(y)|^p \big|}{|x-y|^{N+sp}}\, \dxy \\
    &\leq  \epsilon \int_{\R^N \times \R^N} \Phi_R(x) \frac{ |u_n(x)-u_n(y)|^p}{|x-y|^{N+sp}}\, \dxy + c(\epsilon,p) \int_{\R^N \times \R^N} \Phi_R(x) \frac{ |u(x)-u(y)|^p}{|x-y|^{N+sp}}\, \dxy \\
    & \leq \epsilon \int_{\R^N \times \R^N} \frac{ |u_n(x)-u_n(y)|^p}{|x-y|^{N+sp}}\, \dxy + c(\epsilon,p) \int_{\overline{B_R}^c}  |D^s u|^p \, \dx \,.
\end{align*} 
\tk{As limit $n \ra \infty$, we get
\begin{align*}
    \left|\int_{\R^N}   \Phi_R \, \mathrm{d}\Gamma \ \dx -\int_{\R^N}   \Phi_R \, \mathrm{d}\tilde{\Gamma} \ \dx \right| \leq \epsilon L + c(\epsilon,p) \int_{\overline{B_R}^c}  |D^s u|^p \, \dx \,,
\end{align*} 
where $L=\sup_{n \in \N} \{\|u_n\|_{s,p}\} <\infty$.
}
Then, by taking $R \ra \infty$, we have $\left|\int_{\R^N}   \Phi_R \, \mathrm{d}\Gamma \ \dx -\int_{\R^N}   \Phi_R \, \mathrm{d}\tilde{\Gamma} \ \dx \right| \leq \epsilon L$, where $\epsilon>0$ is arbitrary. Hence, $\Gamma_{\infty}=\displaystyle \lim_{R \ra \infty} \int_{\R^N}   \Phi_R \ \mathrm{d}\tilde{\Gamma} \ \dx=\displaystyle \lim_{R \ra \infty} \overline{\lim_{n \ra \infty}} \int_{\R^N}   \Phi_R \ \mathrm{d}\tilde{\Gamma}_n \ \dx$.
Therefore, by taking $R \ra \infty$ in \eqref{lab: 2}, we get
\begin{equation*} 
 \overline{\lim_{n\ra \infty}} \int_{\R^N} |D^s u_n|^p \ \dx = \|\tilde{\Gamma}\|+ \Gamma_{\infty}.   
\end{equation*}
Now, using Lemma \ref{lemma9}, we obtain \begin{equation*}
\displaystyle \overline{\lim}_{n \ra \infty} \int_{\R^N} |D^s u_n|^p \ \dx \geq \begin{cases}
\displaystyle \int_{\R^N} |D^s u|^p \dx + \frac{\norm{\nu}}{C_H\C_g^*} +  \Gamma_{\infty}, \quad \ \text{if} \ \C_g^* \neq 0 \\
\displaystyle \int_{\R^N} |D^s u|^p \dx +  \Gamma_{\infty}, \quad \ \text{otherwise}. 
     \end{cases}
  \end{equation*} 
\end{proof} 

\subsection{Proof of Theorem \ref{allinone1}}
  {In this subsection, we  prove the equivalence given by Theorem \ref{allinone1}. We start with the following key lemma that gives a necessary condition for the compactness of $W$ on $\mathcal{D}^{s,p}(\R^N)$.}
 \begin{lem} \label{cpct}
 Let  {$0<s<1<p<\frac{N}{s}$ and} $w \in \mathcal{H}_{s,p}(\R^N)$ and $W(u):=\displaystyle\int_{\R^N} |w| |u|^p \dx$ on $\mathcal{D}^{s,p}(\R^N)$ is compact. Then, 
 \begin{enumerate}
  \item[(i)] if $(A_n)$ is a sequence of bounded measurable subsets such that $\chi_{A_n}$ decreases to $0,$ then $$\displaystyle \norm{w \chi_{A_n}}_{\mathcal{H}_{s,p}} \ra 0 \text{ as } n \ra \infty.$$
  \item[(ii)] $\norm{w \chi_{B_n^c}}_{\mathcal{H}_{s,p}} \ra 0$
  as $n \ra \infty$.
 \end{enumerate}
 \end{lem}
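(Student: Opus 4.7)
My plan is to prove both (i) and (ii) by a single contradiction scheme that couples Maz'ya's capacitary definition of $\|\cdot\|_{\mathcal{H}_{s,p}(\R^N)}$ with the compactness hypothesis on $W$. Assuming $\|w\chi_{A_n}\|_{\mathcal{H}_{s,p}(\R^N)} \not\to 0$, I would first extract, along a subsequence, $\epsilon>0$ and compact sets $F_n \Subset \R^N$ with $\int_{F_n}|w|\chi_{A_n}\,\dx > \epsilon\,\mathrm{Cap}_{s,p}(F_n)$. I would then pick near-minimizers $u_n \in \mathcal{N}_{s,p}(F_n)$ satisfying $0 \le u_n \le 1$, $u_n = 1$ a.e.\ on $F_n$, and $\|u_n\|_{s,p}^p \le 2\,\mathrm{Cap}_{s,p}(F_n)$, and normalize to $v_n := u_n/\|u_n\|_{s,p}$, so that $\|v_n\|_{s,p} = 1$. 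The bound $v_n \ge 1/\|u_n\|_{s,p}$ a.e.\ on $F_n$ then yields
\[\int_{\R^N}|w|\chi_{A_n}|v_n|^p\,\dx \;\ge\; \frac{1}{\|u_n\|_{s,p}^p}\int_{F_n}|w|\chi_{A_n}\,\dx \;>\; \frac{\epsilon}{2},\]
which will serve as the target to contradict.

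Next, since $(v_n)$ is bounded in $\D^{s,p}(\R^N)$, I would pass to a further subsequence so that $v_n \wra v$ weakly in $\D^{s,p}(\R^N)$ and, using the compact embedding $\D^{s,p}(\R^N) \embd L^p_{\mathrm{loc}}(\R^N)$, also $v_n \to v$ a.e. The compactness of $W$ then gives $\int_{\R^N}|w||v_n|^p\,\dx \to \int_{\R^N}|w||v|^p\,\dx$, and applying the Br\'ezis--Lieb lemma to $|w|^{1/p}v_n$ in $L^p(\R^N)$ upgrades this to $\int_{\R^N}|w||v_n-v|^p\,\dx \to 0$. With the elementary estimate $|a+b|^p \le 2^{p-1}(|a|^p+|b|^p)$ I would split
\[\int_{\R^N}|w|\chi_{A_n}|v_n|^p\,\dx \;\le\; 2^{p-1}\int_{\R^N}|w|\chi_{A_n}|v|^p\,\dx + 2^{p-1}\int_{\R^N}|w||v_n-v|^p\,\dx.\]
The second summand vanishes by the Br\'ezis--Lieb step. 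For the first, Theorem \ref{H1} guarantees $|w||v|^p \in L^1(\R^N)$ since $v \in \D^{s,p}(\R^N)$, and $\chi_{A_n} \to 0$ pointwise because $(A_n)$ decreases with empty intersection, so dominated convergence forces the first summand to $0$ as well, contradicting the $\epsilon/2$ lower bound.

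For part (ii), the same argument transfers verbatim with $A_n$ replaced by $B_n^c$: boundedness of the $A_n$ is nowhere used, and only the properties that $(B_n^c)$ decreases and $\bigcap_n B_n^c = \emptyset$ are needed to run the dominated convergence step. The main delicate point is the passage from the scalar compactness of $W$ to the strong $L^p$-convergence of $|w|^{1/p}v_n$; this rests on combining the a.e.\ convergence from the local compact embedding with the convergence of the $W$-values via Br\'ezis--Lieb, and this coupling is what makes the capacitary characterization interact cleanly with the compactness hypothesis.
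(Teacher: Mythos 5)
Your proof is correct, and it takes a genuinely different (and in some ways cleaner) route than the paper's. The paper proves (i) by normalizing $v_n = u_n^{p_s^*/p}/\|u_n\|_{s,p}$ and using the boundedness of $A_n$ together with $w\in L^1_{\mathrm{loc}}$ to conclude $\int_{F_n\cap A_n}|w|\,\dx\to 0$, hence $\|u_n\|_{s,p}\to 0$; the fractional Sobolev inequality then forces $\|v_n\|_p\to 0$, so $v_n\rightharpoonup 0$, and compactness of $W$ kills $\int|w||v_n|^p$ outright. For (ii) the paper instead invokes Proposition \ref{propofcap} to localize the capacity on $\overline{B_{n/2}}^{\,c}$, producing test functions supported far from the origin so that $v_n\to 0$ a.e.\ and again $v_n\rightharpoonup 0$. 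You bypass both of these mechanisms: you never argue that $v_n\rightharpoonup 0$, but rather let $v_n\rightharpoonup v$ along a subsequence, use compactness of $W$ plus the Br\'ezis--Lieb lemma to upgrade to $\int|w||v_n-v|^p\,\dx\to 0$, and then dispose of the remaining term $\int|w|\chi_{A_n}|v|^p\,\dx$ by dominated convergence with majorant $|w||v|^p\in L^1$ (Theorem \ref{H1}). This makes (i) and (ii) literally the same argument, removes the need for the capacity-localization Proposition \ref{propofcap} and for the power-$p_s^*/p$ normalization trick, and (as you observe) shows that the boundedness assumption on $A_n$ is not actually used — so your argument proves a slightly stronger version of (i) that subsumes (ii). The trade-off is that the paper's route is more elementary: once $v_n\rightharpoonup 0$ is established, compactness alone finishes the job, with no recourse to Br\'ezis--Lieb or dominated convergence. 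Both proofs are valid; yours is the more unified of the two.
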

\begin{proof} $(i)$ Let $(A_n)$ be a sequence of bounded measurable subsets such that $\chi_{A_n}$ decreases to $0$. Suppose that,  $\norm{w \chi_{A_n}}_{\mathcal{H}_{s,p}} \nrightarrow 0$. Then, there exists $a>0$ such that $\norm{w \chi_{A_n}}_{\mathcal{H}_{s,p}} >a$, for all $n$ (by the monotonicity of the norm).
  Thus, there exists a sequence of compact sets $F_n \subset \R^N$ and $ u_n \in  \mathcal{N}_{s,p}(F_n)$ with $0 \leq u_n \leq 1$ such that  
 \begin{equation} \label{convDp}
 \int_{\R^N} |D^s u_n|^p \dx < \frac{1}{a} \int_{F_n \cap A_n} |w| \dx \leq \frac{1}{a}\int_{|u_n| = 1} |w| |u_n|^{p_{s}^*} \dx \,.
 \end{equation}
 Since $A_n$'s are bounded and $\chi_{A_n}$ decreases to $0$, it follows that $|A_n| \ra 0$, as $n \ra \infty.$ Hence, we also have $\int_{F_n \cap A_n} |w| \ \dx \ra 0$ as $n \ra \infty$ (as $w \in L^1(A_1)$). 
 Hence from the above inequalities, $u_n \ra 0$ in $\mathcal{D}^{s,p}(\R^N)$. Now take $v_n=\frac{u_n^{\frac{p_{s}^*}{p}}}{\|u_n\|_{s,p}}$. Then, one can show that $(v_n)$ is bounded in $\mathcal{D}^{s,p}(\R^N)$ and  $v_n \ra 0$ a.e. because $\|v_n\|_p^p=\frac{\|u_n\|_{p_{s}^*}^{p_{s}^*}}{\|u_n\|_{s,p}^{p}} \leq C\|u_n\|_{s,p}^{p_{s}^*-p} \ra 0$ as $n\ra \infty$. Thus, $v_n \wra 0$ in $\mathcal{D}^{s,p}(\R^N)$.   By the compactness of $W$ we infer that $\lim_{n\ra \infty} \int_{\R^N} |w||v_n|^p\dx =0.$
 On the other hand,  
 \begin{eqnarray*}
  \int_{\R^N} |w| |v_n|^p\, \dx &=& \int_{\R^N}  \frac{|w| |u_n|^{p_{s}^*}}{ \norm{u_n}_{s,p}^p}\, \dx
   \geq  \int_{|u_n| = 1} \frac{|w| |u_n|^{p_{s}^*}}{ \norm{u_n}_{s,p}^p}\, \dx >a \,,
 \end{eqnarray*}
which is a contradiction.

\noi $(ii)$ If  $\norm{w \chi_{B_n^c}}_{\mathcal{H}_{s,p}} \nrightarrow 0$, as $n \ra \infty,$ then there exists a sequence of compact sets $F_n \subset \R^N$ such that
   \begin{eqnarray*}
a < \frac{\int_{F_n \cap B_n^c}  |w|\, \dx}{\cp(F_n)} \leq \frac{\int_{F_n \cap B_n^c} |w|\, \dx}{\cp(F_n \cap B_n^c)} 
\leq \frac{C\int_{F_n \cap B_n^c} |w|\, \dx}{\cp(F_n \cap B_n^c, \overline{B}_{\frac{n}{2}}^c) }
\end{eqnarray*}
 for some $a>0$ and $C>0$. The last inequality follows from part $(ii)$ of Proposition \ref{propofcap}.
 Thus, for each $n\in \N$ there exists $z_{n} \in \mathcal{N}_{s,p}( F_n \cap B_n^c, \overline{B}_{\frac{n}{2}}^c)$   such that 
\[ \int_{\R^N} |D^s z_n|^p\, \dx < \frac{C}{a} \int_{F_n \cap B_n^c} |w| \,\dx \leq \frac{C}{a} \int_{\R^N} |w||z_n|^p\, \dx.\]
 By taking $v_n=\displaystyle \frac{z_n}{\norm{z_n}_{s,p}}$ and following a same argument as in $(i)$ we contradict the compactness of $W$.
\end{proof} 
Next for $\phi \in C_c^{}(\R^N)$ we compute $\C_{\phi}$. For that, we will be using the fact that
$$\cp((F \cap B_r)^{\star}) \geq \mathcal{K}_{N,s,p} \tk{d^{N-sp}} \,,$$
where $\mathcal{K}_{N,s,p}>0$ is a constant \tk{independent of $d$ \cite{Jie} and $d$ is the radius of $(F \cap B_r)^{\star}$}.
 \begin{proposition} \label{Cgzero}
  Let $\phi \in C_c(\R^N)$. Then $\C_{\phi} \equiv 0.$
 \end{proposition}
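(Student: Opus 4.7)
The plan is to show that for every $x \in \R^N$, the quantity $\|\phi \chi_{B_r(x)}\|_{\mathcal{H}_{s,p}(\R^N)}$ decays at least like $r^{sp}$ as $r \to 0$, with a constant that does not depend on $x$; this immediately gives $\C_\phi(x) = 0$ for every $x \in \R^N$, which is exactly $\C_\phi \equiv 0$. The only features of $\phi \in C_c(\R^N)$ that I will use are that $\phi \in L^\infty(\R^N)$ and that $|F \cap B_r(x)| \le \omega_N r^N$ for every $F \Subset \R^N$.

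Fix $x \in \R^N$ and $r > 0$, and start from the Maz'ya sup
\[
 \|\phi \chi_{B_r(x)}\|_{\mathcal{H}_{s,p}(\R^N)} = \sup_{F \Subset \R^N} \frac{\int_{F \cap B_r(x)} |\phi|\, \dx}{\cp(F)}.
\]
For each admissible $F$, I chain three elementary steps on the denominator. First, set-monotonicity of $(s,p)$-capacity gives $\cp(F) \geq \cp(F \cap B_r(x))$. Next, the P\'olya--Szeg\"o inequality (Theorem~9.2 of Almgren, as already invoked in Proposition~\ref{Lorentz space embedding}) applied to $F \cap B_r(x)$ gives $\cp(F \cap B_r(x)) \geq \cp\bigl((F \cap B_r(x))^{\star}\bigr)$. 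Finally, the ball-capacity bound recalled just before the statement yields $\cp\bigl((F \cap B_r(x))^{\star}\bigr) \geq \mathcal{K}_{N,s,p}\, R^{N-sp}$, where $R$ is the radius of the symmetrized ball, so $\omega_N R^N = |F \cap B_r(x)| \leq \omega_N r^N$ and hence $R \leq r$.

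For the numerator I just use $\phi \in L^\infty$:
\[
 \int_{F \cap B_r(x)} |\phi|\, \dx \leq \|\phi\|_\infty\, |F \cap B_r(x)| = \|\phi\|_\infty \omega_N R^N.
\]
Combining the two bounds, the ratio in the sup is at most
\[
 \frac{\|\phi\|_\infty \omega_N R^N}{\mathcal{K}_{N,s,p}\, R^{N-sp}} = \frac{\omega_N \|\phi\|_\infty}{\mathcal{K}_{N,s,p}}\, R^{sp} \leq C(N,s,p,\|\phi\|_\infty)\, r^{sp},
\]
uniformly in $F$ and in $x$. Taking the supremum over $F$ and letting $r \to 0$ gives $\C_\phi(x)=0$ for every $x \in \R^N$. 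The only genuinely delicate point — and what I regard as the main obstacle — is to apply the symmetrization step to $F \cap B_r(x)$ rather than to $F$ itself: otherwise the radius produced by P\'olya--Szeg\"o is controlled by $|F|^{1/N}$ rather than by $r$, the $B_r$-localization of the weight is wasted, and the $r^{sp}$ decay is lost.
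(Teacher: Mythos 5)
Your argument is correct and coincides with the paper's own proof: both bound the numerator by $\|\phi\|_\infty\, |F\cap B_r(x)|$, and both bound the denominator from below by symmetrizing the \emph{localized} set $F\cap B_r(x)$ via monotonicity of capacity, P\'olya--Szeg\"o, and the ball-capacity estimate $\cp\bigl((F\cap B_r(x))^{\star}\bigr)\geq \mathcal{K}_{N,s,p}R^{N-sp}$ with $R\leq r$, so the ratio is $\lesssim r^{sp}$ uniformly in $F$ and $x$. The only omission relative to the paper is the one-line remark that $\C_\phi(\infty)=0$ as well (trivially, since $\phi$ has compact support), which the paper includes because the conclusion of this proposition is later invoked in the proof of Theorem~\ref{allinone1} to control $\C_w(\infty)$ and not just $\C_w^*$.
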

 \begin{proof}
\tk{Let $F$ be a compact set in $\R^N$ such that $|F|\neq 0$}. Then for $\phi \in C_c^{}(\R^N)$, we have
\begin{eqnarray*} 
  \norm{\phi \chi_{B_r(x)}}_{\mathcal{H}_{s,p}} =  \tk{\sup_{F}} \left[ \frac{\int_{F \cap B_r(x)}|\phi|\dx}{\cp(F, \R^N)}\right]  
 \leq  \tk{\sup_{F}} \left[ \frac{ \sup (|\phi|) |(F\cap B_r)^{\star}|}{\cp((F \cap B_r)^{\star})}  \right]. \nonumber
 \end{eqnarray*}
 The last inequality follows from P\'olya-Szeg\"o inequality \cite[Theorem 9.2]{Almgren}.
 If $d$ is the radius of $(F \cap B_r)^{\star}$ then
 \[\frac{ |(F\cap B_r)^{\star}|}{\cp((F \cap B_r)^{\star})} \leq \text{C}(N,s,p) \frac{d^N}{d^{(N-sp)}} = \text{C}(N,s,p) d^{sp} \leq \text{C}(N,s,p) r^{sp} .\]
 Thus, $\C_{\phi}(x) = \lim_{r \ra 0} \norm{\phi \chi_{B_r(x)}}_{\mathcal{H}_{s,p}} =0 $. Also, one can easily see that $\C_{\phi}(\infty)=0 $ as $\phi$  has compact support.
 \end{proof}

Now, we prove our main theorem. 
\begin{proof}[Proof of Theorem \ref{allinone1}]
 $(i) \implies (ii):$ Let $W$ be compact. Take a sequence of measurable subsets $(A_n)$ 
 of $\R^N$ such that $\chi_{A_n}$ decreases to $0$ a.e. in $\R^N$. Part $(ii)$ of Lemma \ref{cpct} gives $\norm{w \chi_{B_n^c}}_{\mathcal{H}_{s,p}} \ra 0$, as $n \ra \infty$. Choose $\epsilon >0$ arbitrarily. There exists $N_0 \in \N,$ 
such that $\norm{w \chi_{B_n^c}}_{\mathcal{H}_{s,p}} \leq \frac{\epsilon}{2},$ for all $n \geq N_0.$ Now $A_n= (A_n \cap B_{N_0}) \cup (A_n \cap B_{N_0}^c)$,  for each $n$. Thus, 
\[\norm{w \chi_{A_n}}_{\mathcal{H}_{s,p}} \leq \norm{w \chi_{A_n \cap B_{N_0}}}_{\mathcal{H}_{s,p}} + \norm{w \chi_{A_n \cap B_{N_0}^c}}_{\mathcal{H}_{s,p}} \leq \norm{w \chi_{A_n \cap B_{N_0}}}_{\mathcal{H}_{s,p}} + \frac{\epsilon}{2}.\]
 Part $(i)$ of Lemma \ref{cpct} implies that there exists a natural number $N_1(\geq N_0)$ such that $$\norm{w \chi_{A_n \cap B_{N_0}}}_{\mathcal{H}_{s,p}} \leq \frac{\epsilon}{2}, \ \forall n \geq N_1 \,,$$ and hence $\norm{w \chi_{A_n}}_{\mathcal{H}_{s,p}} \leq \epsilon$ for all $n \geq N_1$. 
 Therefore, $w$ has an absolutely continuous norm.

\noi $(ii) \implies (iii):$ Let $w$ has absolute continuous norm in $\mathcal{H}_{s,p}$. Then, $\norm{w \chi_{B_m^c}}_{\mathcal{H}_{s,p}} $ converges to $0$ as $m \ra \infty$. Let $\epsilon >0$ be arbitrary. We choose $m_{\var} \in \N$ such that $\norm{w \chi_{B_m^c}}_{\mathcal{H}_{s,p}} < \epsilon$, for all $m \geq m_{\var}$. Now for any $n \in \N$,
\[w = w \chi_{\{|w| \leq n\} \cap B_{m_{\var}}} + w \chi_{\{|w| >n\} \cap B_{m_{\var}}} + w \chi_{B_{m_{\var}}^c} := w_n + z_n.\]
where $w_n=w \chi_{\{|w| \leq n\} \cap B_{m_{\var}}}$ and $z_n=w \chi_{\{|w| >n\} \cap B_{m_{\var}}} + w \chi_{B_{m_{\var}}^c}.$ Clearly, $w_n \in L^{\infty}(\R^N)$ and $|supp(w_n)| < \infty $. 
Furthermore,
\[ \norm{z_n}_{\mathcal{H}_{s,p}} \leq \norm{w \chi_{\{|w| >n\} \cap B_{m_{\var}}}}_{\mathcal{H}_{s,p}} + \norm{w \chi_{B_{m_{\var}}^c}}_{\mathcal{H}_{s,p}} <  \norm{w \chi_{\{|w| >n\} \cap B_{m_{\var}}}}_{\mathcal{H}_{s,p}} + \epsilon \,. \]
 Now, $w \in L^1_{\mathrm{loc}}(\R^N)$ ensures that $\chi_{\{|w| >n\} \cap B_{m_{\var}}} \ra 0$ as $n\ra \infty$. As $w$ has absolutely continuous norm, $\norm{w \chi_{\{|w| >n\} \cap B_{{m_{\var}}}}}_{\mathcal{H}_{s,p}} < \epsilon$ for large $n$. Therefore, $\norm{z_n}_{\mathcal{H}_{s,p}}< 2\epsilon$ for large $n$.
 Hence, Lemma \ref{charF} concludes that $w \in \mathcal{H}_{s,p,0}(\R^N)$.
 
 \noi $(iii) \implies (iv):$ Let $w \in \mathcal{H}_{s,p,0}(\R^N)$ and $\epsilon >0$ be arbitrary. Then there exists $w_\var \in C_c(\R^N)$ such that
 $\norm{w-w_\var}_{\mathcal{H}_{s,p}} < \epsilon$. Thus Proposition \ref{Cgzero} infers that $\C_{w_\var}$ vanishes.
Now as $w = w_\var + (w-w_\var)$, it follows that $\C_w(x) \leq \C_{w_\var}(x) + \C_{w - w_\var}(x) \leq \norm{w - w_\var}_{\mathcal{H}_{s,p}} < \epsilon$ and hence $\C^*_w=0$. By a similar argument one can show $\C_w(\infty)=0.$

\noi $(iv) \implies (i):$ Assume that $\C^*_w =0= \C_w(\infty)$. Let $(u_n)$ be a bounded sequence in $ \mathcal{D}^{s,p}(\R^N)$. Then by Lemma \ref{mlc2}, up to a sub-sequence we have,
 \begin{eqnarray*}
 \nu_{\infty} &\leq& C_H\ \C_w(\infty)  \Gamma_{\infty} \label{1},\\
 \norm{\nu} &\leq& C_H \C^{*}_w \norm{\Gamma} \label{2}, \\
 \tk{\lim_{n \ra \infty}} \int_{\R^N} |w||u_n|^p \dx &=& \int_{\R^N} |w||u|^p \dx + \norm{\nu} + \nu_{\infty} \label{3}.
 \end{eqnarray*}
 As $\C^*_w=0= \C_w(\infty)$ we immediately conclude that
 $\displaystyle \tk{\lim_{n \ra \infty}} \int_{\R^N} |w||u_n|^p\, \dx = \int_{\R^N} |w||u|^p\,\dx $
 and hence $W: \mathcal{D}^{s,p}(\R^N) \mapsto \R$ is compact.
 \end{proof} 
\section{Weighted Eigenvalue Problem} \label{WEVP}
This section deals with the weighted eigenvalue problem given by \eqref{Weighetd eigenvalue problem}. We show the existence of the first eigenvalue by using   {variational techniques on the} Rayleigh quotient and then prove some qualitative properties of the first eigenvalue. Finally, we prove  {the existence of infinitely many eigenvalues that increase to infinity.} 

\subsection{Qualitative behaviour of the first eigenvalue}
We show the existence of an eigenvalue by following a direct variational approach. We begin with the Rayleigh quotient $Q(u)$ given by
\begin{equation} \label{Rayleigh quotient}
   Q(u) = \dfrac{\int_{\R^N \times \R^N} \frac{|u(x)-u(y)|^{p}}{|x-y|^{N+sp}}\, \dxy}{\int_{\mathbb{R}^{N}} w|u|^{p}\, \dx},
\end{equation}
with the domain of definition 
\begin{equation} \label{L}
    \mathbb{L}:= \{ u \in \D^{s,p}(\mathbb{R}^{N}): \int_{\mathbb{R}^{N}} w|u|^{p}\, \dx > 0 \} \,.
\end{equation}
Since $w \in L_{\mathrm{loc}}^{1}(\mathbb{R}^{N})$ and $w_{1} \not\equiv 0$, then by \cite[Proposition 4.2]{Prashanth} there exists $\phi \in C_{c}^{\infty}(\mathbb{R}^{N})$ such that $\int_{\mathbb{R}^{N}} w|\phi|^{p}dx >0$. Therefore, the set $\mathbb{L}$ is non-empty.
Now, let us consider 
\begin{equation} \label{M}
  \mathbb{S} := \{ u \in \D^{s,p}(\mathbb{R}^{N}) : \int_{\mathbb{R}^{N}} w|u|^{p}\, \dx = 1 \},
\end{equation}
\begin{equation} \label{J(u)}
 J(u) =  \int_{\mathbb{R}^{N} \times \mathbb{R}^{N}} \frac{|u(x)-u(y)|^{p}}{|x-y|^{N+sp}}\, \dxy.
\end{equation}
 If $Q$ is $C^{1}$, then the critical points of $Q$ over $\mathbb{L}$ correspond to the Euler-Lagrange equation associated with the weighted eigenvalue problem (\ref{Weighetd eigenvalue problem}), and the corresponding critical values of $Q$ are the eigenvalues of the problem (\ref{Weighetd eigenvalue problem}).  Observe that finding a critical point of the Rayleigh quotient $Q$ over the domain $\mathbb{L}$ is similar to finding the critical point of the functional $J$ over $\mathbb{S}$, i.e., there is a one-to-one correspondence between them. Therefore, we try to find the critical points of the functional $J$ on $\mathbb{S}$ by employing some sufficient  {conditions} on $w_1$. One of the main difficulties in showing the existence of a critical point of $J$ on $\mathbb{S}$ arises due to the non-compactness of the map $W$. Since we have a weak assumption on $w_{2}$, i.e., it is just locally integrable, therefore the map $W: \D^{s,p}(\mathbb{R}^{N}) \rightarrow \mathbb{R}$ given by
\begin{equation} \label{W(u)}
W(u) = \int_{\mathbb{R}^{N}} w|u|^{p}\,\dx,
\end{equation}
may not even be continuous and hence $\mathbb{S}$ may not be closed in $\D^{s,p}(\mathbb{R}^{N})$. Despite that, we prove that a sequence of minimizers of $J$ on $\mathbb{S}$ has a weak limit, which also lies in $\mathbb{S}$. From the definition of the space $\D^{s,p}(\mathbb{R}^{N})$, it is easy to check that the functional $J$ becomes coercive and weakly lower semi-continuous on $\D^{s,p}(\mathbb{R}^{N})$. Further, if $w_{1} \in \mathcal{H}_{s,p,0}(\mathbb{R}^{N})$, the map $W_{1}: \D^{s,p}(\mathbb{R}^{N}) \rightarrow \mathbb{R}$ given by
\begin{equation} \label{W1(u)}
W_{1}(\varphi) = \int_{\mathbb{R}^{N}} w_{1}|\varphi|^{p}\,\dx,
\end{equation}
is compact on $\D^{s,p}(\mathbb{R}^{N})$ by Theorem \ref{allinone1}.

\begin{theorem}
Let $w \in L_{\mathrm{loc}}^{1}(\mathbb{R}^{N})$ with $w_{1} \in \mathcal{H}_{s,p,0}(\mathbb{R}^{N}), w_1 \not\equiv 0$ and \tk{$0<s<1<p<N/s$}. Then $J$ admits a minimizer on $\mathbb{S}$.
\end{theorem}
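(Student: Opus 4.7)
The plan is to apply the direct method of the calculus of variations, exploiting the compactness of $W_{1}$ (guaranteed by Theorem \ref{allinone1} since $w_{1}\in\mathcal{H}_{s,p,0}(\R^N)$) to pass to the limit in the $w_{1}$-part, and using Fatou's lemma to get the right one-sided inequality for the $w_{2}$-part.

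First I would show that $m:=\inf_{\mathbb{S}} J>0$. If not, there would exist $(u_{n})\subset\mathbb{S}$ with $\|u_{n}\|_{s,p}^{p}=J(u_{n})\to 0$, so $u_{n}\to 0$ in $\D^{s,p}(\R^N)$; by continuity of $W_{1}$, $W_{1}(u_{n})\to 0$, while $1=W_{1}(u_{n})-W_{2}(u_{n})\le W_{1}(u_{n})$ because $w_{2}\ge 0$, a contradiction. Now let $(u_{n})\subset\mathbb{S}$ be a minimizing sequence, i.e.\ $J(u_{n})\to m$. Then $(u_{n})$ is bounded in $\D^{s,p}(\R^N)$, so along a subsequence (not relabeled) $u_{n}\rightharpoonup u$ in $\D^{s,p}(\R^N)$ and $u_{n}\to u$ a.e.\ in $\R^{N}$. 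Weak lower semi-continuity of the seminorm gives $J(u)\le\liminf_{n}J(u_{n})=m$.

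Next I would verify that $u\in\mathbb{S}$. Compactness of $W_{1}$ (Theorem \ref{allinone1}) yields $W_{1}(u_{n})\to W_{1}(u)$. Since $w_{2}\ge 0$ and $|u_{n}|^{p}\to |u|^{p}$ a.e., Fatou's lemma gives $W_{2}(u)\le\liminf_{n}W_{2}(u_{n})$. Combining,
\begin{equation*}
\int_{\R^N} w\,|u|^{p}\dx \;=\; W_{1}(u)-W_{2}(u)\;\ge\;\lim_{n}W_{1}(u_{n})-\liminf_{n}W_{2}(u_{n})\;\ge\;\limsup_{n}\bigl(W_{1}(u_{n})-W_{2}(u_{n})\bigr)\;=\;1.
\end{equation*}
Call this integral $\alpha\ge 1$. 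If $\alpha>1$, rescale: $v:=u/\alpha^{1/p}\in\mathbb{S}$ satisfies $J(v)=J(u)/\alpha<J(u)\le m$, contradicting minimality. Hence $\alpha=1$, i.e.\ $u\in\mathbb{S}$, and $J(u)=m$, so $u$ is the desired minimizer.

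The only delicate point is step two, where the sign of $w_{2}$ is crucial: if $w_{2}$ were not non-negative, Fatou would fail on the $W_{2}$ part and one could not rule out concentration or escape to infinity via the indefinite part. With the decomposition $w=w_{1}-w_{2}$, $w_{1}\in\mathcal{H}_{s,p,0}(\R^N)$ handles the non-compact contribution (through Theorem \ref{allinone1}) and $w_{2}\ge 0$ handles the only locally integrable contribution (through Fatou), so no concentration-compactness machinery beyond what has been developed is needed here. The positivity of $\inf J$ and the rescaling trick together handle the possibility that $\mathbb{S}$ is not weakly closed.
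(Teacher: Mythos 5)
Your proof is correct and follows essentially the same strategy as the paper: direct method, boundedness of the minimizing sequence, compactness of $W_{1}$ (Theorem \ref{allinone1}) to pass to the limit in the $w_{1}$-term, Fatou's lemma for the $w_{2}$-term to get $\int_{\R^N} w|u|^{p}\dx \geq 1$, and a rescaling argument to land in $\mathbb{S}$. Your explicit preliminary verification that $\inf_{\mathbb{S}} J > 0$ is a small but welcome clarification that the paper leaves implicit (in the paper it follows from $\int w|u|^{p}\dx \geq 1$ forcing $u \neq 0$, hence $J(u)=\|u\|_{s,p}^{p}>0$).
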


\begin{proof}
Since $w \in L_{\mathrm{loc}}^{1}(\mathbb{R}^{N})$ and $w_{1} \not\equiv 0$, then by \cite[Proposition 4.2]{Prashanth} there exists $\phi \in C_{c}^{\infty}(\mathbb{R}^{N})$ such that $\int_{\mathbb{R}^{N}} w|\phi|^{p}dx >0$
 and hence $\mathbb{S} \neq \emptyset$. Let $\{u_{n}\}$ be a minimizing
sequence for $J$ on $\mathbb{S}$; i.e.,
    \begin{equation*}
    \lim_{n \rightarrow \infty} J(u_{n}) = \lambda_{1} := \inf_{u \in \mathbb{S}} J(u).
    \end{equation*}
\tk{Since} $\{u_{n}\}$ is bounded in $\D^{s,p}(\mathbb{R}^{N})$, by reflexivity $\{u_{n}\}$ admits a weakly convergent subsequence in $\D^{s,p}(\mathbb{R}^{N})$. Let us denote the subsequence by $\{u_{n}\}$ itself and the weak limit by $u$ in $\D^{s,p}(\mathbb{R}^{N})$. Further, the compactness of the map $W_{1}$ gives
     \begin{equation*}
    \lim_{n \rightarrow \infty} \int_{\mathbb{R}^{N}} w_{1}|u_{n}|^{p}\,\dx = \int_{\mathbb{R}^{N}} w_{1}|u|^{p}\,\dx.
     \end{equation*}
Since $u_{n} \in \mathbb{S}$, we write 
     \begin{equation*}
    \int_{\mathbb{R}^{N}} w_{2}|u_{n}|^{p}\,\dx = \int_{\mathbb{R}^{N}} w_{1}|u_{n}|^{p}\,\dx -1.
     \end{equation*}
Also, we know that the embedding $\D^{s,p}(\mathbb{R}^{N}) \hookrightarrow L_{\mathrm{loc}}^{p}(\mathbb{R}^{N})$ is compact, thus $u_{n} \rightarrow u$ a.e. in $\mathbb{R}^{N}$ up to a subsequence. We apply Fatou's Lemma to get
     \begin{equation*}
    \int_{\mathbb{R}^{N}} w_{2}|u|^{p}\,\dx \leq \int_{\mathbb{R}^{N}} w_{1}|u|^{p}\,\dx -1,
     \end{equation*}
which shows that $ \int_{\mathbb{R}^{N}} w|u|^{p}\dx \geq 1$. Setting $\tilde{u} := \dfrac{u}{(\int_{\mathbb{R}^{N}} w|u|^{p}\,\dx )^{1/p}}$, and since $J$ is weakly lower semi-continuous, we have 
    \begin{equation*}
    \lambda_{1} \leq J(\Tilde{u}) = \dfrac{J(u)}{\int_{\mathbb{R}^{N}} w|u|^{p}\, \dx} \leq J(u) \leq \liminf_{n} J(u_{n}) = \lambda_{1}.
    \end{equation*}
Thus the equality must hold at each step and $\int_{\mathbb{R}^{N}} w|u|^{p}\dx =1 $, which shows that $u \in \mathbb{S}$ and $ J(u) = \lambda_{1}$. Hence, $J$ admits a minimizer $u$ on $\mathbb{S}$.
\end{proof}
Further, we prove that any minimizer of $Q$ on $\mathbb{L}$ is an eigenfunction of \eqref{Weighetd eigenvalue problem}.
\begin{proposition} \label{u is a eigenfunction}
Let $u$ be a minimizer of $Q$ on $\mathbb{L}$. Then $u$ is an eigenfunction of (\ref{Weighetd eigenvalue problem}). \tk{In particular, $u$ is an first eigenfunction of (\ref{Weighetd eigenvalue problem}), i.e., an eigenfunction corresponding to the first eigenvalue $\lambda_{1} := \inf_{u \in \mathbb{S}} J(u)$.}
\end{proposition}
\begin{proof}
For each $\phi \in C_{c}^{\infty}(\mathbb{R}^{N})$, we can verify that $Q$ admits directional derivative along $\phi$ by using the dominated convergence theorem. It is given that $u$ is a minimizer of $Q$ on $\mathbb{L}$, therefore we have a necessary condition
      \begin{align*}
    \dfrac{\mathrm{d}}{\mathrm{d}t} Q(u + t \phi) |_{t = 0} &= 0.\end{align*} 
   This further implies 
   \begin{align*} \int_{\mathbb{R}^{N} \times \mathbb{R}^{N}} \dfrac{|u(x)-u(y)|^{p-2} (u(x)-u(y)) (\phi(x) -\phi(y))}{ |x-y|^{N+sp}}\, \dxy &= \lambda_{1} \int_{\mathbb{R}^{N}} w|u|^{p-2}u\phi\,\dx, 
     \end{align*}
     $\text{for all}~\phi \in C_{c}^{\infty}(\mathbb{R}^{N})$, where $\lambda_{1} := \inf_{u \in \mathbb{S}} J(u)$. Now using the density of $ C_{c}^{\infty}(\mathbb{R}^{N})$ into $\D^{s,p}(\mathbb{R}^{N})$, we can conclude
    \begin{equation*}
    \int_{\mathbb{R}^{N} \times \mathbb{R}^{N}} \dfrac{|u(x)-u(y)|^{p-2} (u(x)-u(y)) (\phi(x) -\phi(y))}{ |x-y|^{N+sp}}\, \dxy = \lambda_{1} \int_{\mathbb{R}^{N}} w|u|^{p-2}u\phi\,\dx, 
    \end{equation*}
    $\text{for all}~ \phi \in \D^{s,p}(\mathbb{R}^{N}).$
\end{proof}
Next, we prove that the first eigenfunction does not change its sign. We adapt the idea from the article \cite{Cui} to prove this Lemma. 
\begin{lemma} \label{Positive}
The first eigenfunctions (i.e., the eigenfunctions corresponding to the first eigenvalue $\lambda_1$) of the weighted eigenvalue problem (\ref{Weighetd eigenvalue problem}) are of a constant sign. Moreover, a non-negative first eigenfunction is positive.
\end{lemma}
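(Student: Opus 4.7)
The plan is to reduce both claims to the statement that $|u|$ is itself a first eigenfunction whenever $u$ is. First I would observe the pointwise inequality
\[ \bigl||u|(x)-|u|(y)\bigr|^{p} \;\leq\; |u(x)-u(y)|^{p}, \qquad (x,y)\in\mathbb{R}^N\times\mathbb{R}^N, \]
with equality if and only if $u(x)u(y)\geq 0$. Integrating against $|x-y|^{-(N+sp)}$ gives $J(|u|)\leq J(u)$, while $\int_{\R^N}w||u||^p\dx=\int_{\R^N}w|u|^p\dx$, so $Q(|u|)\leq Q(u)=\lambda_1$. Hence $|u|$ is also a minimizer of $Q$ on $\mathbb{L}$, and by Proposition \ref{u is a eigenfunction} it is a non-negative first eigenfunction.

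Next I would establish the positivity assertion (b): any non-negative first eigenfunction $v:=|u|$ satisfies $v>0$ a.e.\ in $\R^N$. Since $v\in\mathcal{D}^{s,p}(\R^N)$ is a weak solution of $(-\Delta_p)^s v=\lambda_1 w v^{p-1}$ with $v\geq 0$ and $v\not\equiv 0$ (because $\int_{\R^N}w|v|^p=1$), one can invoke a strong minimum principle / Harnack-type inequality for the fractional $p$-Laplacian. Concretely, for any ball $B_R(x_0)$ the nonlocal pointwise expression
\[ (-\Delta_p)^s v(x)=2\!\int_{\R^N}\!\frac{|v(x)-v(y)|^{p-2}(v(x)-v(y))}{|x-y|^{N+sp}}\,\mathrm{d}y \]
shows that if $v(x_0)=0$ on a set of positive measure while $v\not\equiv 0$, then $(-\Delta_p)^s v(x_0)<0$ there, contradicting $(-\Delta_p)^s v=\lambda_1 w v^{p-1}=0$ on $\{v=0\}$; this is the standard nonlocal strong maximum principle (as in Del Pezzo--Quaas and Cui--Sun for this operator). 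Hence $v>0$ a.e., which gives (b).

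Finally, for claim (a), I return to a general first eigenfunction $u$. Since $|u|$ is a minimizer with $J(|u|)=J(u)$, the pointwise inequality above must be an equality for a.e.\ $(x,y)\in\R^N\times\R^N$, so $u(x)u(y)\geq 0$ a.e.\ in the product. If both $\{u>0\}$ and $\{u<0\}$ had positive Lebesgue measure, then by Fubini their product would have positive product-measure on which $u(x)u(y)<0$, a contradiction. Thus $u$ has constant sign a.e.; combined with (b) applied to $\pm u$, we obtain $|u|>0$ a.e.\ and $u=\pm|u|$.

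The main obstacle is the strong maximum principle / strict positivity step in $\R^N$. Most references state it for bounded domains, so I would need either to localize and patch balls (using that $w\in L^1_{\mathrm{loc}}$ suffices to make $v^{p-1}w\in L^1_{\mathrm{loc}}$) or to cite a nonlocal strong minimum principle valid on all of $\R^N$ for weak supersolutions of $(-\Delta_p)^s v\geq 0$. The other steps are straightforward consequences of the triangle inequality and Proposition \ref{u is a eigenfunction}.
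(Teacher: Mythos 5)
Your proposal follows essentially the same route as the paper: show $|u_1|$ is also a minimizer of $J$ on $\mathbb{S}$ via the pointwise inequality $\bigl||u_1(x)|-|u_1(y)|\bigr|\le|u_1(x)-u_1(y)|$, deduce constant sign from the equality case, and obtain strict positivity from a nonlocal strong minimum principle. The only (minor) thing the paper makes explicit that you do not is the rewrite $(-\Delta_p)^s u_1+\lambda_1 w_2\,u_1^{p-1}=\lambda_1 w_1\,u_1^{p-1}\ge 0$ before invoking \cite[Theorem 1.2]{Pezzo}; since $w=w_1-w_2$ is sign-changing, $u_1$ is not a supersolution of $(-\Delta_p)^s v=0$ directly, and one needs this split to put the equation in the form the strong minimum principle actually requires.
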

\begin{proof}
We consider $u_{1}$ the first eigenfunction of \eqref{Weighetd eigenvalue problem} corresponding to the first eigenvalue $\lambda_1$. Then $u_1$ is a minimizer of $J$ over $\mathbb{S}$. Since $u_1 \in \mathbb{S}$, this implies that $|u_1|\in \mathbb{S}$. Now we have
\begin{align*}
    \lambda_1 = \inf\limits_{u \in \mathbb{S}} \int_{\R^N \times \R^N} \frac{|u(x)-u(y)|^{p}}{|x-y|^{N+sp}}\,\dxy &\leq \int_{\R^N \times \R^N} \frac{\big||u_1(x)|-|u_1(y)|\big|^{p}}{|x-y|^{N+sp}}\,\dxy \\
    &\leq \int_{\R^N \times \R^N} \frac{|u_1(x)-u_1(y)|^{p}}{|x-y|^{N+sp}}\,\dxy = \lambda_1,
\end{align*}
Therefore, equality must hold at each step, which implies either $u_{1}^{+} \equiv 0$ or $u_{1}^{-} \equiv 0$. Thus, the eigenfunction $u_{1}$ corresponding to the first eigenvalue $\lambda_1$ does not change its sign.
 If we assume $u_1 \geq 0$, then we have
\begin{equation*}
     (-\Delta_{p})^{s}u_{1} + \lambda_{1}w_{2} (u_{1})^{p-1} = \lambda_{1}w_{1} (u_{1})^{p-1} \geq 0 ~~\text{in}~\mathbb{R}^{N}.
 \end{equation*}
Thus the strong minimum principle \cite[Theorem 1.2]{Pezzo} yields $u_{1} >0$ a.e. in $\mathbb{R}^{N}$.
\end{proof}
{Further, we show the simplicity of the first eigenvalue of \eqref{Weighetd eigenvalue problem}.}
\begin{lemma} \label{Simple}
The eigenfunction of (\ref{Weighetd eigenvalue problem}) corresponding to $\lambda_{1}$ are unique up to some constant multiplication, i.e., $\lambda_{1}$ is simple.
\end{lemma}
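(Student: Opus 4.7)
The plan is to mimic the Picone-type comparison used in the preceding lemma, but with both eigenfunctions associated with the same eigenvalue $\lambda_1$, and then exploit the equality case of the discrete Picone identity.

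Let $u_1$ and $v_1$ be two eigenfunctions of \eqref{Weighetd eigenvalue problem} corresponding to $\lambda_1$. By Lemma \ref{Positive} (combined with the preceding rescaling argument), both do not change sign, so after multiplying by appropriate scalars we may assume $u_1, v_1 > 0$ a.e.\ in $\R^N$ and $\int_{\R^N} w u_1^p\dx = \int_{\R^N} w v_1^p\dx = 1$, i.e.\ $u_1,v_1\in\mathbb{S}$. Choose $\phi_m\in C_c^{\infty}(\R^N)$ with $\phi_m\to u_1$ in $\D^{s,p}(\R^N)$, and, as in the proof of the previous lemma, consider
\[
\psi_m := \frac{\phi_m^p}{(v_1+\tfrac{1}{m})^{p-1}}\in \D^{s,p}(\R^N).
\]
The admissibility of $\psi_m$ as a test function follows exactly as in the estimate from the previous lemma.

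Testing the equation for $v_1$ against $\psi_m$ and applying the discrete Picone identity (Lemma \ref{Picone identity}) with $u=\phi_m$ and $v=v_1+\tfrac1m$, the nonnegativity of $K(\phi_m,v_1+\tfrac1m)$ yields
\[
\int_{\R^N\times\R^N}\frac{|\phi_m(x)-\phi_m(y)|^p}{|x-y|^{N+sp}}\dxy \;\geq\; \lambda_1\int_{\R^N} w\,\phi_m^{p}\left(\frac{v_1}{v_1+\tfrac1m}\right)^{p-1}\dx.
\]
Letting $m\to\infty$, the left-hand side converges to $\lambda_1\int_{\R^N} w u_1^p\dx = \lambda_1$ by the $\D^{s,p}$-convergence $\phi_m\to u_1$ and the continuity of $W_1$ on $\D^{s,p}(\R^N)$ (Theorem \ref{allinone1}). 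On the right-hand side, since $v_1>0$ a.e.\ the integrand increases to $w u_1^p$ and $w u_1^p\in L^1(\R^N)$, so dominated/monotone convergence (applied separately to $w_1$ and $w_2$, using $w_1 u_1^p\in L^1$ from $W_1(u_1)<\infty$ and $w_2 u_1^p\in L^1$ from $u_1\in\mathbb{S}$) forces the right-hand side to converge to $\lambda_1\int_{\R^N} w u_1^p\dx = \lambda_1$ as well.

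Consequently, writing the above inequality as $\displaystyle\int_{\R^N\times\R^N}\frac{K(\phi_m,v_1+\tfrac1m)(x,y)}{|x-y|^{N+sp}}\dxy\geq 0$ and noting that its limit superior equals $\lambda_1-\lambda_1=0$, Fatou's lemma (after passing to an a.e.\ convergent subsequence of $\phi_m$) gives
\[
\int_{\R^N\times\R^N}\frac{K(u_1,v_1)(x,y)}{|x-y|^{N+sp}}\dxy \;\leq\; 0.
\]
Since $K(u_1,v_1)\geq 0$ pointwise, we must have $K(u_1,v_1)\equiv 0$ a.e., and the equality case in Lemma \ref{Picone identity} yields $u_1=cv_1$ a.e.\ for some constant $c$. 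Hence $\lambda_1$ is simple.

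The main obstacle is the passage to the limit: one has to simultaneously push $\phi_m\to u_1$ in $\D^{s,p}$ and $v_1+\tfrac1m\to v_1$ in the denominator while controlling $\psi_m$ as a legitimate test function, and then invoke Fatou for the nonnegative quantity $K(\phi_m,v_1+\tfrac1m)/|x-y|^{N+sp}$ to transfer the limit equality pointwise. This is delicate because $w_2$ is only locally integrable, but the construction guarantees that all integrals that matter are controlled through $W_1$ (compactness from Theorem \ref{allinone1}) and the normalization $u_1\in\mathbb{S}$.
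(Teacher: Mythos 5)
Your proposal takes a genuinely different route from the paper. The paper proves simplicity by a hidden-convexity (Brasco--Franzina type) argument: for two positive first eigenfunctions $\phi_1,\phi_2\in\mathbb{S}$ it forms $\Phi=\bigl(\frac{\phi_1^p+\phi_2^p}{2}\bigr)^{1/p}$, observes $\Phi\in\mathbb{S}$ trivially, and uses the convexity of $\alpha(r,s)=|r^{1/p}-s^{1/p}|^p$ to get $\lambda_1\leq J(\Phi)\leq\frac12 J(\phi_1)+\frac12 J(\phi_2)=\lambda_1$; the rigidity in the convexity then gives proportionality. You instead run the discrete Picone comparison with $\phi_m\to u_1$ in $\D^{s,p}$ and $v_1+\tfrac1m$ and invoke the equality case of Lemma~\ref{Picone identity}. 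That is a legitimate strategy that also appears in the literature, but it is technically heavier than the paper's, and as written it has a real gap.

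The gap is in the limit $m\to\infty$ of the right-hand side, specifically its $w_2$-part. You claim that
\[
\int_{\R^N} w_2\,\phi_m^p\Bigl(\tfrac{v_1}{v_1+1/m}\Bigr)^{p-1}\dx\;\longrightarrow\;\int_{\R^N}w_2\,u_1^p\dx,
\]
citing dominated/monotone convergence and the fact that $w_2u_1^p\in L^1$. Neither applies. The integrand does not increase monotonically (the sequence $\phi_m$ itself changes with $m$, so this is not the monotone limit $v_1/(v_1+1/m)\nearrow 1$ against a fixed function), and there is no obvious dominating function since $w_2$ is only in $L^1_{\mathrm{loc}}(\R^N)$ and $\phi_m$ is only known to converge to $u_1$ in $\D^{s,p}$; $\D^{s,p}$-convergence gives no control on $\int w_2|\phi_m-u_1|^p$. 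What you would really need is that $C_c^\infty(\R^N)$ is dense in the weighted space $X$ (equivalently, that $\phi_m$ can be chosen to converge in $\|\cdot\|_X$), or that $\phi_m$ can be taken to satisfy $0\leq\phi_m\leq u_1$ a.e.\ while still being smooth and compactly supported; neither is established in the paper, and the second fails for plain mollifications. The paper's convexity argument sidesteps exactly this difficulty: $\int w_2\Phi^p=\frac12\int w_2\phi_1^p+\frac12\int w_2\phi_2^p<\infty$ automatically, with no approximation or limit passage in the $w_2$-weighted integral at all. Two minor slips as well: the convergence of the left-hand side $\|\phi_m\|_{s,p}^p\to\|u_1\|_{s,p}^p=\lambda_1$ has nothing to do with the continuity of $W_1$ or Theorem~\ref{allinone1}, so that citation should be dropped; and the final Fatou step is fine in principle (lower semicontinuity of the nonnegative integrand $K$ along an a.e.\ convergent subsequence) but should be stated as such rather than as a one-line appeal.
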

\begin{proof}
Let $\phi_{1}$ and $\phi_{2}$ be two eigenfunctions corresponding to the same eigenvalue $\lambda_{1}$, then we may suppose that $\phi_{1}$,~$\phi_{2}>0$ and $\phi_{1}, \phi_{2} \in \mathbb{S}$, namely,
\begin{equation*}
    \int_{\mathbb{R}^{N}} w|\phi_{1}|^{p} \dx = \int_{\mathbb{R}^{N}} w|\phi_{2}|^{p} \dx = 1.
\end{equation*}
Let 
\begin{equation*}
    \Phi = \bigg(\dfrac{\phi_{1}^{p}+\phi_{2}^{p}}{2}\bigg)^{1/p},
\end{equation*}
then we have $\Phi \in \mathbb{S}$. Since the function $\alpha(r,s) := |r^{1/p} - s^{1/p}|^{p}$ is convex for $r,s>0$, we have the following inequality
\begin{equation*}
    \alpha\bigg(\frac{r_{1}+r_{2}}{2}, \frac{s_{1}+s_{2}}{2}\bigg) \leq \frac{1}{2}\alpha(r_{1},s_{1}) + \frac{1}{2}\alpha(r_{2},s_{2}) ,
\end{equation*}
where the equality holds only for $r_{1}s_{2} = r_{2}s_{1}$ {(see \cite[Lemma 13]{Lindgren2014})}. 
Therefore, according to the above inequality and $\phi_{1},\phi_{2},\Phi \in \mathbb{S}$ we deduce,
\begin{align*}
    \lambda_{1} \leq \int_{\mathbb{R}^{N} \times \mathbb{R}^{N}} \dfrac{|\Phi(x) - \Phi(y)|^{p}}{|x-y|^{N+sp}}\, \dxy \leq \frac{1}{2}\int_{\mathbb{R}^{N} \times \mathbb{R}^{N}} \dfrac{|\phi_{1}(x) - \phi_{1}(y)|^{p}}{|x-y|^{N+sp}}\,\dxy
    + \frac{1}{2} \int_{\mathbb{R}^{N} \times \mathbb{R}^{N}} \dfrac{|\phi_{2}(x) - \phi_{2}(y)|^{p}}{|x-y|^{N+sp}}\dxy = \lambda_{1} \,. 
\end{align*}
Thus, equality must hold at each step. Therefore, $\phi_{1}(x)\phi_{2}(y) = \phi_{1}(y)\phi_{2}(x)$ which implies that $\phi_{1}(x) = c \phi_{2}(x)$ with $c \in \mathbb{R}$.
\end{proof}

The following Lemma shows that the first eigenfunctions are the only eigenfunctions that do not change their sign.
We use the idea of \cite[Theorem 3.3]{Goyal2018eigenvalues} to prove the following Lemma. To prove this result, we assume that $w_2$ is a Hardy-weight; not only just a locally integrable function.
\begin{lem}
\tk{Let $w=w_1-w_2$, $w_1,w_2 \geq 0$ be such that $w_1 \in \mathcal{H}_{s,p,0}(\mathbb{R}^{N}) \text{ and }w_2 \in \mathcal{H}_{s,p}(\mathbb{R}^{N})$}. Then, the eigenfunctions of \eqref{Weighetd eigenvalue problem} corresponding to eigenvalues other than $\lambda_1$ must change sign.
\end{lem} 
\begin{proof}
We assume that $u_1$ and $u$ are eigenfunctions corresponding to distinct eigenvalues 
$\lambda_1$ and $\lambda$, respectively. Then the following holds.
\begin{align}
     (-\Delta_{p})^{s}u_{1} &= \lambda_{1}w (u_{1})^{p-1} ~~\text{in}~(\D^{s,p}(\mathbb{R}^{N}))', \label{equation first}\\
      (-\Delta_{p})^{s}u_{} &= \lambda_{}w |u_{}|^{p-2}u ~~\text{in}~(\D^{s,p}(\mathbb{R}^{N}))'. \label{equation second}
\end{align}
We proceed by contradiction. Suppose, on the contrary, that the eigenfunction $u$ does not change sign. Without loss of generality, we may assume that $u \ge 0$. \tk{Let $\{\phi_m\} \subset C^\infty_c(\mathbb{R}^N)$ be a sequence} such that  $\phi_m \to u_1$ in $\D^{s,p}(\mathbb{R}^N)$ as $m \to \infty$. For some $\epsilon > 0$, we choose the test functions
\begin{align*}
    \xi_1 = u_1,
\qquad
\tk{\xi_2 = \frac{|\phi_m|^p}{(u + \epsilon)^{p-1}}}.
\end{align*}
First, we show that $\xi_2 \in \D^{s,p}(\mathbb{R}^N)$. To this end, we observe that \tk{
\begin{align*}
 &|\xi_2(x)-\xi_2(y)| \\
    &= \left|\tfrac{|\phi_m(x)|^p}{(u(x)+\epsilon)^{p-1}} - \tfrac{|\phi_m(y)|^p}{(u(y)+\epsilon)^{p-1}} \right|= \left|\tfrac{|\phi_m(x)|^p- |\phi_m(y)|^p}{(u(x)+\epsilon)^{p-1}} + \tfrac{|\phi_m(y)|^p \left((u(y)+\epsilon)^{p-1} - (u(x)+\epsilon)^{p-1} \right)}{(u(x)+\epsilon)^{p-1}(u(y)+\epsilon)^{p-1}} \right|\\
    & \leq \epsilon^{1-p}\big| \left|\phi_m(x) \right|^p- \left|\phi_m(y)\right|^p \big| + \norm{\phi_m}_{\infty}^p \tfrac{\big|(u(x)+\epsilon)^{p-1} - (u(y)+\epsilon)^{p-1} \big|}{(u(x)+\epsilon)^{p-1}(u(y)+\epsilon)^{p-1}}\\
     & \leq p\epsilon^{1-p}(|\phi_m(x)|^{p-1}+ |\phi_m(y)|^{p-1})|\phi_m(x)- \phi_m(y)| + \norm{\phi_m}_{\infty}^p (p-1)\tfrac{\big((u(x)+\epsilon)^{p-2} + (u(y)+\epsilon)^{p-2}\big)}{(u(x)+\epsilon)^{p-1}(u(y)+\epsilon)^{p-1}} \left|u(x) -u(y)\right|\\
    & \leq 2p\epsilon^{1-p}\norm{\phi_m}_{\infty}^{p-1}|\phi_m(x)- \phi_m(y)| + \norm{\phi_m}_{\infty}^p (p-1)\left( \tfrac{1}{(u(x)+\epsilon)(u(y)+\epsilon)^{p-1}} +\tfrac{1}{(u(x)+\epsilon)^{p-1}(u(y)+\epsilon)}  \right)|u(x) -u(y)|   .\end{align*} }
Consequently, it follows from the above estimate that
\begin{align*}
    |\xi_2(x)-\xi_2(y)|  \leq C(\epsilon,p,\norm{\phi_m}_{\infty}) \left(|\phi_m(x)- \phi_m(y)| + |u(x) -u(y)| \right).
\end{align*}
\tk{Since $\|\phi_m\|_{s,p}<\infty$ and $\|u\|_{s,p}<\infty$, it follows that 
$\|\xi_2\|_{s,p}<\infty$.  
Moreover, $\xi_2 \in L^{p_s^*}(\mathbb{R}^N)$.  
By the characterization of $\D^{s,p}(\mathbb{R}^N)$ (see \cite[Theorem 3.1]{Brasco-Castro-Vazquez-2021-CVPDE}), we get 
$\xi_2 \in \D^{s,p}(\mathbb{R}^N)$.
} Choosing $\xi_1$ and $\xi_2$ as test functions in \eqref{equation first} 
and \eqref{equation second}, respectively, we obtain
\begin{equation} \label{equation third}
    \int_{\mathbb{R}^{N} \times \mathbb{R}^{N}} \dfrac{|u_1(x)-u_1(y)|^{p}}{ |x-y|^{N+sp}}\, \dxy = \lambda_{1} \int_{\mathbb{R}^{N}} w|u_1|^{p}\, \dx,
\end{equation} and
\begin{align} \label{equation sp}
\begin{split}
    \int_{\mathbb{R}^{N} \times \mathbb{R}^{N}} \tfrac{|u(x)-u(y)|^{p-2} (u(x)-u(y))}{ |x-y|^{N+sp}} \tk{\left(\tfrac{|\phi_m(x)|^p}{(u(x)+\epsilon)^{p-1}}-\tfrac{|\phi_m(y)|^p}{(u(y)+\epsilon)^{p-1}} \right)}\, \dxy 
    = \lambda_{} \int_{\mathbb{R}^{N}} w|u|^{p-2}u \tk{\tfrac{|\phi_m(x)|^p}{(u(x)+\epsilon)^{p-1}}}\,\dx.
\end{split}
\end{align}
By Lemma~\ref{Picone identity}, we have $K(|\phi_m|, u + \epsilon) \ge 0$, 
where $K$ is given by \eqref{def:K}. Combining this inequality with \eqref{equation sp} yields
\begin{equation}
    \int_{\mathbb{R}^{N} \times \mathbb{R}^{N}} \dfrac{|\phi_m(x)-\phi_m(y)|^{p}}{ |x-y|^{N+sp}} \dxy  - \lambda_{} \int_{\mathbb{R}^{N}} w |\phi_m|^{p} \tk{\left(\frac{u}{u+ \epsilon} \right)^{p-1}}\, \dx \geq 0.
\end{equation}
Taking the limit as $\varepsilon \to 0$ and applying the dominated convergence theorem, we obtain \tk{
\begin{equation} \label{equation four}
    \int_{\mathbb{R}^{N} \times \mathbb{R}^{N}} \dfrac{|\phi_m(x)-\phi_m(y)|^{p}}{ |x-y|^{N+sp}} \dxy  - \lambda_{} \int_{\mathbb{R}^{N}} w |\phi_m|^{p}\, \dx \geq 0.
\end{equation}}
\tk{Since $w \in \mathcal{H}_{s,p}(\mathbb{R}^{N})$, $\phi_m \to u_1$ in $\D^{s,p}(\mathbb{R}^N)$ implies 
$$ \int_{\mathbb{R}^{N}} w |\phi_m|^{p}\, \dx \to \int_{\mathbb{R}^{N}} w u_1^{p}\, \dx$$
as $m \to \infty$.}
Thus, by subtracting \eqref{equation third} from \eqref{equation four} and 
letting $m \to \infty$, we deduce that
\begin{align*}
    (\lambda_1 -\lambda) \int_{\mathbb{R}^{N}}w |u_1|^p\, \dx \geq 0.
\end{align*}
Hence, the above inequality holds if and only if $\lambda_1 > \lambda$, which contradicts the fact that $\lambda_1$ is the smallest eigenvalue. This completes the proof.
\end{proof}
\subsection{Infinite set of eigenvalues}
{This section deals with the existence of an infinite set of eigenvalues of (\ref{Weighetd eigenvalue problem}). We
follow the Ljusternik-Schnirelmann theory on $C^1$-manifold due to Szulkin \cite{Szulkin}. The Ljusternik-Schnirelmann theory enables us to find the critical points of a functional $J$ on a manifold $M$. First, we recall the definition of the Palais-Smale (PS) condition and genus. Assume that $M$ is a $C^1$-manifold and $f \in C^{1}(M; \mathbb{R})$. A sequence $\{u_n\} \subset M$ is said to be a (PS) sequence on $M$ if $\{f(u_n)\}$ is bounded and $f'(u_n) \rightarrow 0$, where $f'(u)$ represents the Fr\'echet differential of $f$ at $u$. If every (PS) sequence $\{u_n\}$ admits a convergent subsequence, then we say $f$ satisfies the (PS) condition on $M$. Let $\Theta$ be the family of sets $A \subset M \setminus \{0\}$ such that $A$ is closed in $M$ and symmetric concerning $0$, i.e. $z \in A$ implies $-z \in A$. If $A \in \Theta$, then the Krasnoselski genus of $A$ is denoted by
$\gamma(A)$ and is defined as the smallest integer $k$ for which there exists a non-vanishing
odd continuous mapping from A to $\mathbb{R}^{k}$. When no such map exists for any $k$, we set $\gamma(A) = \infty$, and also we set $\gamma(\emptyset) = 0$.  We
refer to \cite{Rabinowitz} for more details and properties of the genus. We can deduce the next theorem from \cite[Corollary 4.1]{Szulkin}.}
\begin{theorem} \label{Ljusternik}
Let M be a closed symmetric $C^1$-submanifold of a real Banach space X and $0 \notin M$. Let $f \in C^1(M; \R)$ be an even function satisfying the (PS) condition
on M and is bounded below. Define
\[ \lambda_{j} := \inf_{A \in \Gamma_{j}} \sup_{x \in A} f(x),\]
where $\Gamma_j = \{ A \subset M : A ~\text{is compact and symmetric about origin},~ \gamma(A) \geq j\}$. If for a given $j,~ \lambda_j = \lambda_{j+1}= .~ .~ . = \lambda_{j+p} \equiv \lambda, ~then ~\gamma(K_\lambda) \geq p + 1,~ where~ K_\lambda = \{x \in M : f(x) = \lambda ,~ f'(x) = 0 \}$.
\end{theorem}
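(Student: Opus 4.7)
The plan is to obtain this as a standard consequence of Ljusternik-Schnirelmann minimax theory for even $C^1$-functionals on closed symmetric $C^1$-submanifolds, in the form developed by Szulkin. The two key ingredients are the abstract structural properties of the Krasnoselski genus $\gamma$ and an odd deformation lemma tailored to the manifold $M$ rather than to the ambient Banach space $X$.

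First I would record the properties of $\gamma$ on the family $\Theta$ of closed symmetric subsets of $M$: monotonicity, subadditivity $\gamma(A \cup B) \leq \gamma(A) + \gamma(B)$, invariance under odd continuous maps, and the neighborhood property that every compact symmetric $K \in \Theta$ admits a symmetric open neighborhood $U$ with $\gamma(\overline{U}) = \gamma(K)$. These are classical and I would quote them from Rabinowitz. Next I would verify that $K_\lambda$ is compact and symmetric: any sequence in $K_\lambda$ is automatically a (PS) sequence at level $\lambda$, hence has a convergent subsequence with limit in $K_\lambda$ by continuity of $f$ and $f'$, while symmetry is immediate from $f$ being even and $0 \notin M$.

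The technical heart of the argument is the deformation lemma on the $C^1$-manifold $M$: under the (PS) condition, for any symmetric open neighborhood $U$ of $K_\lambda$ there exist $\epsilon > 0$ and an odd continuous map $\eta: M \to M$ such that $\eta(f^{\lambda+\epsilon} \setminus U) \subset f^{\lambda - \epsilon}$, where $f^c := \{u \in M : f(u) \leq c\}$. This is the piece I would invoke directly from Szulkin, whose proof builds a locally Lipschitz pseudo-gradient vector field for $f$ that is tangent to $M$ and odd, and integrates the resulting flow. With these tools in place, the minimax levels $\lambda_j$ are finite by boundedness from below of $f$ and nonemptiness of $\Gamma_j$, and the usual deformation argument shows each $\lambda_j$ is a critical value of $f$.

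The multiplicity statement $\gamma(K_\lambda) \geq p+1$ follows by contradiction. Assume $\gamma(K_\lambda) \leq p$ and choose a symmetric open neighborhood $U$ of $K_\lambda$ with $\gamma(\overline{U}) \leq p$; apply the deformation lemma to produce the corresponding $\epsilon > 0$ and odd $\eta$. Since $\lambda_{j+p} = \lambda$, the infimum definition yields $A \in \Gamma_{j+p}$ with $\sup_A f < \lambda + \epsilon$, so $A \subset f^{\lambda+\epsilon}$. The set $B := \overline{A \setminus U}$ is compact and symmetric, and subadditivity gives $\gamma(A) \leq \gamma(B) + \gamma(\overline{U})$, hence $\gamma(B) \geq (j+p) - p = j$, so $B \in \Gamma_j$. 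Then $\eta(B) \in \Gamma_j$ by odd-map invariance, yet $\sup_{\eta(B)} f \leq \lambda - \epsilon < \lambda = \lambda_j$, contradicting the minimax definition of $\lambda_j$. The main obstacle is not the genus bookkeeping, which is routine, but rather the construction of the odd deformation on the manifold $M$ itself; that is precisely what Szulkin's framework provides and what I would borrow wholesale from \cite{Szulkin} instead of rebuilding.
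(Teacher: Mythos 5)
The paper does not prove this theorem at all: it simply states that the result ``can be deduced from \cite[Corollary 4.1]{Szulkin}'' and moves on, so there is no in-paper argument to compare yours against. What you have written is a correct sketch of the standard Ljusternik--Schnirelmann multiplicity proof, and it is exactly what one would produce if one unfolded that citation. Your genus bookkeeping is sound: from $A \subset \overline{A \setminus U} \cup \overline{U}$, monotonicity plus subadditivity gives $\gamma(A) \leq \gamma(\overline{A\setminus U}) + \gamma(\overline U)$, so $\gamma(B) \geq j$ with $B := \overline{A \setminus U}$, and invariance under the odd continuous map $\eta$ then yields the contradiction $\lambda_j \leq \sup_{\eta(B)} f \leq \lambda - \epsilon$. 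You also correctly locate the one genuinely nontrivial ingredient, namely the odd deformation on the submanifold $M$ (rather than on the ambient space), and you are right that this should be imported from Szulkin rather than rebuilt; constructing an odd pseudo-gradient tangent to $M$ and integrating it is precisely what \cite{Szulkin} supplies. Two minor points worth making explicit if you were to flesh this out: (1) compactness of $K_\lambda$ uses that any sequence in $K_\lambda$ is trivially a (PS) sequence at level $\lambda$, so the (PS) condition gives a subsequential limit, which lies in $K_\lambda$ by continuity of $f$ and $f'$ -- you said this, but note that in the $p=0$ case one also needs the convention $\gamma(\emptyset)=0$ and a version of the deformation lemma that applies when $K_\lambda = \emptyset$ (take $U=\emptyset$) to conclude $K_\lambda \neq \emptyset$; (2) the nonemptiness of each $\Gamma_j$, needed for $\lambda_j < \infty$, is a separate hypothesis that the paper verifies elsewhere (their Lemma 4.18) rather than something internal to this theorem. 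Neither of these is a gap in your reasoning; they are just the standard caveats one would spell out in a full write-up.
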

\noi It can be noticed that the set $ \mathbb{S} = \{ u \in \mathcal{D}^{s,p}(\mathbb{R}^{N}): \int_{\mathbb{R}^{N}} w|u|^{p} \dx = 1 \}$ may not admit a manifold structure from the topology on $\mathcal{D}^{s,p}(\mathbb{R}^{N})$, due to the weak assumptions on $w_{2}$. However, the set $\mathbb{S}$ inherits a $C^{1}$ Banach manifold structure from the following subspace $X$ of  $\mathcal{D}^{s,p}(\mathbb{R}^{N})$.

\noi For $w_{2} \in L^{1}_{\mathrm{loc}}(\mathbb{R}^{N})$, we define
\[ \|u\|_{X}^{p} : = \|u\|_{s,p}^p + \int_{\mathbb{R}^{N}} w_{2}|u|^{p} \dx, \] and
\[ X:= \{ u \in  \mathcal{D}^{s,p}(\mathbb{R}^{N}): \|u\|_{X} < \infty\}. \]
\begin{lemma}
The space $X = (X, \|\cdot\|_{X})$ is a uniformly convex Banach space.
\end{lemma}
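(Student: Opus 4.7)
The plan is to realize $X$ as an isometric copy inside a standard $L^p$-space and deduce both completeness and uniform convexity from the corresponding classical facts. Set $d\mu := |x-y|^{-(N+sp)}\, \dxy$ on $\R^N \times \R^N$, and consider
\[ Y := L^p(\R^N \times \R^N, d\mu) \,\times\, L^p(\R^N, w_2 \dx) \]
endowed with the natural $p$-sum norm $\|(f,g)\|_Y^p := \|f\|_{L^p(d\mu)}^p + \|g\|_{L^p(w_2 \dx)}^p$. Define $T : X \to Y$ by $T(u) := (U_u, u)$ with $U_u(x,y) := u(x) - u(y)$. Then $\|T(u)\|_Y^p = \|u\|_{s,p}^p + \int_{\R^N} w_2 |u|^p \dx = \|u\|_X^p$, so $T$ is a linear isometry.

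\textbf{Completeness.} Let $(u_n) \subset X$ be Cauchy. Then it is Cauchy both in $(\D^{s,p}(\R^N), \|\cdot\|_{s,p})$ and in the weighted Lebesgue space $L^p(\R^N, w_2 \dx)$, each of which is complete. Hence there exist $u \in \D^{s,p}(\R^N)$ and $v \in L^p(\R^N, w_2 \dx)$ with $u_n \to u$ in $\|\cdot\|_{s,p}$ and $u_n \to v$ in $L^p(w_2 \dx)$. The fractional Sobolev embedding (valid since $sp < N$) gives $u_n \to u$ in $L^{p_s^*}(\R^N)$; hence a subsequence converges to $u$ Lebesgue-a.e., and a further subsequence converges to $v$ $w_2$-a.e. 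Since $w_2 \dx$ is absolutely continuous with respect to Lebesgue measure, $u = v$ holds $w_2$-a.e., so $\int_{\R^N} w_2 |u-v|^p \dx = 0$; thus $u \in X$ and $\|u_n - u\|_X \to 0$.

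\textbf{Uniform convexity.} Viewing $Y$ as $L^p$ on the disjoint union of the two measure spaces $(\R^N \times \R^N, d\mu)$ and $(\R^N, w_2 \dx)$, Clarkson's inequalities yield that $Y$ is uniformly convex for every $p \in (1, \infty)$ with some modulus $\delta_Y$. Uniform convexity passes to isometrically embedded subspaces: given $u_1, u_2 \in X$ with $\|u_i\|_X = 1$ and $\|u_1 - u_2\|_X \geq \epsilon$, the images $T(u_i) \in Y$ are unit vectors at distance $\geq \epsilon$, whence $\|T((u_1+u_2)/2)\|_Y \leq 1 - \delta_Y(\epsilon)$, which reads $\|(u_1+u_2)/2\|_X \leq 1 - \delta_Y(\epsilon)$. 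Hence $X$ is uniformly convex with modulus at least $\delta_Y$.

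\textbf{Main obstacle.} The one point demanding care is the identification $u = v$ in the completeness step: convergence in the Gagliardo seminorm does not give pointwise information directly, so one must route through the Sobolev embedding $\D^{s,p}(\R^N) \hookrightarrow L^{p_s^*}(\R^N)$ and then reconcile the resulting Lebesgue-a.e.\ limit with the $w_2$-a.e.\ limit produced by the weighted $L^p$-convergence, using two successive subsequence extractions. Once this matching is in place, the uniform convexity of $X$ is an immediate consequence of Clarkson on the ambient $L^p$-space.
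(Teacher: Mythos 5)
Your proof is correct, and the key step --- uniform convexity --- is handled by a genuinely different and cleaner route than the paper takes. The paper proves uniform convexity ``by hand'': for $p\geq 2$ it applies the elementary Clarkson inequality directly to $\|\cdot\|_X^p$; for $1<p<2$ it first establishes uniform convexity of $\|\cdot\|_{s,p}$ and $\|\cdot\|_{w_2,p}$ separately via the conjugate Clarkson inequality, then proves a combination estimate $\|\tfrac{u+v}{2}\|_{s,p}^p\leq\tfrac{1-\delta_2}{2}(\|u\|_{s,p}^p+\|v\|_{s,p}^p)$ by a two-case contradiction argument before reassembling the $X$-norm. Your observation that $u\mapsto\bigl(u(x)-u(y),u\bigr)$ is a linear isometry of $X$ into $L^p\bigl(\R^N\times\R^N,|x-y|^{-(N+sp)}\dxy\bigr)\oplus_p L^p(\R^N,w_2\dx)$ --- which is itself an $L^p$ space over the disjoint union of the two measure spaces, hence uniformly convex by Clarkson for every $1<p<\infty$ --- collapses this entire case analysis: uniform convexity is inherited along any linear isometry, and the isometry is by the very definition of $\|\cdot\|_X$. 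This buys a uniform modulus $\delta_Y(\epsilon)$ for all $p\in(1,\infty)$ at once and sidesteps the most delicate part of the paper's proof. The completeness arguments are essentially the same in substance; you route through the completeness of the two component spaces and reconcile the Lebesgue-a.e. and $w_2\dx$-a.e. limits via two subsequence extractions and absolute continuity of $w_2\dx$ with respect to Lebesgue measure, whereas the paper fixes the $\D^{s,p}$-limit $u$ and applies Fatou's lemma twice; both rely on the Sobolev embedding to extract an a.e.-convergent subsequence.
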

\begin{proof} We divide the proof into several steps, following the approach used in \cite[Lemma 5.1]{Chen2022}.\\
\noi {\bf Step 1:} First, we show that $X$ is complete under the given norm. Let $\{u_{n}\}$ be a Cauchy sequence in $X$. That is, for every $\epsilon > 0$, there exists $N_0 \in \mathbb{N}$ depending on $\epsilon$ such that 
\begin{equation} \label{Cauchy}
    \|u_{n} - u_{m}\|_{X} < \epsilon,\quad \text{ for all }n,m \geq N_0.
\end{equation}
By the definition of the norm on $X$, we have $\|u_{n}-u_{m}\|_{s,p} \leq \|u_{n}-u_{m}\|_{X}< \epsilon.$ Hence, $\{u_{n}\}$ is also a Cauchy sequence in $\D^{s,p}(\mathbb{R}^{N})$. By completeness of $\D^{s,p}(\mathbb{R}^{N})$, there exists $u \in \D^{s,p}(\mathbb{R}^{N})$ such that $u_{n} \rightarrow u$ in $\D^{s,p}(\mathbb{R}^{N})$. It remains to verify that $u \in X$. From the convergence in $\D^{s,p}(\mathbb{R}^{N})$, there exists a subsequence $\{u_{n_{k}}\}$ such that $u_{n_{k}} \rightarrow u$ a.e. in $\mathbb{R}^{N}~\text{as}~k \rightarrow \infty$. Applying Fatou’s lemma together with \eqref{Cauchy}, we get
\begin{align*}
    \int_{\mathbb{R}^{N}} w_{2}|u|^{p} \dx &\leq \liminf_{k \rightarrow \infty} \int_{\mathbb{R}^{N}} w_{2}|u_{n_{k}}|^{p} \dx \\
   & \leq \liminf_{k \rightarrow \infty} ( \|u_{n_{k}}- u_{N_0}\|_{X} + \|u_{N_0}\|_{X})^{p}\\ &\leq (\epsilon + \|u_{N_0}\|_{X})^{p} < \infty.
\end{align*}
Thus, $u \in X$. Moreover, for all $n \geq N_0$, we have $ \|u_{n}-u\|_{X} \leq \liminf_{k \rightarrow \infty}\|u_{n}-u_{n_{k}}\|_{X} \leq \epsilon$. Consequently, $u_n \rightarrow u$ strongly in $X$, and therefore $X$ is complete. \\
{\bf Step 2:} Now we prove that $X$ is a uniformly convex Banach space. Fix $0 <\epsilon \leq 2$, let $u,v \in X$ satisfy 
\begin{align} \label{UC}
    \|u\|_{X}=1=\|v\|_{X} \ \text{and} \ \|u-v\|_{X} \geq \epsilon.
\end{align}
We treat the cases $1<p<2$ and $p \geq 2$ separately. We first consider $p \geq 2$. To this end, we recall the following inequality \cite[Lemma 2.37, page 42]{Adams2003sobolev}:
\begin{equation} \label{adam}
    \left| \frac{a+b}{2} \right|^{p} +  \left|\frac{a-b}{2} \right|^{p} \leq \frac{|a|^{p}+|b|^{p}}{2}, \ \text{for }a,b \in \mathbb{R}.
\end{equation}
From \eqref{adam}, we can deduce the following:
\begin{align*}
   \left\| \frac{u+v}{2} \right\|_{X}^{p} + \left\|\frac{u-v}{2} \right\|_{X}^{p} &=  \left\|{\frac{u+v}{2}}\right\|_{s,p}^{p} + \left\|{\frac{u-v}{2}}\right\|_{s,p}^{p} + \int_{\mathbb{R}^{N}} w_{2} \left(\left| \frac{u+v}{2}  \right|^{p} + \left| \frac{u-v}{2}  \right|^{p} \right) \dx\\
    &\leq \frac{1}{2} \left[  \|u\|_{s,p}^{p} + \|v\|_{s,p}^{p} + \int_{\mathbb{R}^{N}} w_{2} (|u|^{p}+ |v|^{p})\, \dx \right]  \\
   & = \frac{1}{2} [\|u\|_{X}^{p} + \|v\|_{X}^{p}] = 1. 
\end{align*}
Thus, by choosing $\delta = 1- \left(1-\left(\frac{\epsilon}{2}\right)^p\right)^{1/p}>0$, we deduce from the preceding estimate that $\left\| \frac{u+v}{2} \right\|_{X} \leq 1-\delta$.
Hence, $X$ is uniformly convex for $p \geq 2$.\\
\noi We now consider the case $1<p<2$. Set $p'=\frac{p}{p-1}$. \tk{Using \cite[Lemma 5.1, Formula (5.2)]{Chen2022} with $u,v \in \D^{s,p}(\R^N)$, we have
\begin{align*}
    \left\| \frac{u+v}{2} \right\|_{s,p}^{p'} + \left\| \frac{u-v}{2} \right\|_{s,p}^{p'} \leq \left[ \frac{1}{2} \|u\|_{s,p}^p + \frac{1}{2} \|v\|_{s,p}^p \right]^{\frac{1}{p-1}}.
\end{align*}}
Now fix $0<\epsilon_1 \leq 2$, and let $u,v \in \D^{s,p}(\R^N)$ satisfy $\|u\|_{s,p} = 1 = \|v\|_{s,p}$ and $\|u-v\|_{s,p} \geq \epsilon_1$. Define $\delta_1 = 1- \big( 1- (\epsilon_1 /2)^{p'} \big)^{\frac{1}{p'}}>0$. Then $ \left\| \frac{u+v}{2} \right\|_{s,p} \leq 1 - \delta_1$, which shows that $\|\cdot\|_{s,p}$ is a uniformly convex norm. By analogy, the norm $\|u\|_{w_2,p} := \left( \int_{\R^N} w_2 |u|^p\, \dx \right)^{1/p}$ is also uniformly convex. Returning to \eqref{UC}, we observe that $\|u\|_{s,p} \leq 1$ and $\|v\|_{s,p} \leq 1$ and we may assume $\|u-v\|_{s,p} \geq \frac{\epsilon}{2^{1/p}}$. We claim that there exists $\delta_2 >0$ such that 
\begin{align} \label{E1}
    \left\|\frac{u+v}{2} \right\|_{s,p}^p \leq \frac{1- \delta_2}{2} \left(\|u\|_{s,p}^p + \|v\|_{s,p}^p \right). 
\end{align}
We prove the above claim by contradiction.
The proof is divided into two parts.\\
\noi \textbf{Case 1}. Let $\|u\|_{s,p} = 1$ and $\|v\|_{s,p} \leq 1$. Suppose, for contradiction, that claim \eqref{E1} is false. Then there exist $\epsilon_0 >0$ and sequences $\{u_n\},\{v_n\} \subset X$ such that $\|u_n\|_{s,p} = 1,\, \|v_n\|_{s,p} \leq 1$ and $\|u_n-v_n\|_{s,p} \geq \frac{\epsilon_0}{2^{1/p}}$, and satisfying 
\begin{align} \label{E2}
    \left\|\frac{u_n+v_n}{2} \right\|_{s,p}^p \geq \frac{1}{2} \left(1- \frac{1}{n} \right) \left(\|u_n\|_{s,p}^p + \|v_n\|_{s,p}^p \right).
\end{align}
We first prove that $\lim_{n \rightarrow \infty} \|v_n\|_{s,p} =1$. Suppose, toward a contradiction, that $\lim_{n \rightarrow \infty} \|v_n\|_{s,p} <1$. By definition, there exist a subsequence $\{v_{n_l}\}\subset\{v_{n}\}$ and a constant $B<1$ such that $\|v_{n_l}\|_{s,p} \leq B$, for all $l$. Applying the triangle inequality, we obtain
\begin{align} \label{E3}
    \left\|\frac{u_{n_l}+v_{n_l}}{2} \right\|_{s,p}^p \leq \left(\frac{\|u_{n_l}\|_{s,p} + \|v_{n_l}\|_{s,p}}{2} \right)^p \leq \frac{\|u_{n_l}\|_{s,p}^p + \|v_{n_l}\|_{s,p}^p}{2}  \left[\left(\tfrac{1+B}{2} \right)^p / \left(\tfrac{1+B^p}{2} \right) \right],
\end{align}
where the final inequality follows from the monotonicity (increase) of the function $f(x) = \tfrac{(1+x)^p}{1+x^p},\ 1<p<2, \ x \in (0,1)$. Observe that $\left(\tfrac{1+B}{2} \right)^p / \left(\tfrac{1+B^p}{2} \right)<1$ for all $1<p<2$. Consequently, \eqref{E2} contradicts \eqref{E3}. Hence, $\lim_{n \rightarrow \infty} \|v_n\|_{s,p} =1$. 

Define $w_n = \frac{v_n}{\|v_n\|_{s,p}}$. It is straightforward to verify that $\lim\limits_{n \rightarrow \infty}\|v_n - w_n\|_{s,p}=0$. Passing to the limit as $n \rightarrow \infty$ in \eqref{E2} and using $\lim\limits_{n \rightarrow \infty} \|v_n\|_{s,p} =1$, we have
\begin{align*} 
   1 \leq \lim\limits_{n \rightarrow \infty} \left\|\frac{u_{n}+v_{n}}{2} \right\|_{s,p} \leq \lim\limits_{n \rightarrow \infty}\left\|\frac{u_{n}+w_{n}}{2} \right\|_{s,p} \leq 1,
\end{align*}
which implies $\lim_{n \rightarrow \infty}\left\|\frac{u_{n}+w_{n}}{2} \right\|_{s,p}=1$. Since $\|u_n-v_n\|_{s,p} \geq \frac{\epsilon_0}{2^{1/p}}$ for all $n\geq 1$, there exists a positive integer $N_1$ such that $\|u_n-w_n\|_{s,p} \geq \frac{\epsilon_0}{2^{1+1/p}}$ for all $n \geq N_1$. The uniform convexity of the $\|\cdot\|_{s,p}$ norm then guarantees the existence of $\delta_3 >0$ depending on $\epsilon_0$ such that $\left\|\frac{u_{n}+w_{n}}{2} \right\|_{s,p}\leq 1 - \delta_3$ for all $n \geq N_0$. This contradicts $\lim_{n \rightarrow \infty}\left\|\frac{u_{n}+w_{n}}{2} \right\|_{s,p}=1$. Hence, claim \eqref{E1} follows.

\noi \textbf{Case 2}.  Let $\|u\|_{s,p} \leq 1$ and $\|v\|_{s,p} \leq 1$. Without loss of generality, assume that $\|u\|_{s,p} \geq \|v\|_{s,p}>0$, the other case being analogous. Define $ u_1 = \frac{u}{\|u\|_{s,p}}, \ v_1 = \frac{v}{\|u\|_{s,p}} $. Then $\|u_1\|_{s,p}=1, \ \|v_1\|_{s,p}\leq 1 \ \text{and} \ \|u_1 - v_1\|_{s,p} \geq \frac{\epsilon}{2^{1/p}}$. By Case 1, inequality \eqref{E1} holds for $u_1$ and $v_1$, and hence it also holds for $u$ and $v$. Thus, using 
\begin{align*}
    \frac{\|u\|_{s,p}^p +\|v\|_{s,p}^p}{2} \geq \left\|\frac{u-v}{2} \right\|_{s,p}^p \geq \frac{\epsilon^p}{2^{p+1}},
\end{align*}
we get
\begin{align*}
    \left\|\frac{u+v}{2} \right\|_{X} &= \left(\left\|\frac{u+v}{2} \right\|_{s,p}^{p} + \int_{\R^N} w_2 \left|\frac{u+v}{2}\right|^p \dx \right)^{1/p} \\
    & \leq \left( (1- \delta_2)\frac{\|u\|_{s,p}^p +\|v\|_{s,p}^p}{2} + \int_{\R^N} w_2 \left( \frac{|u|^p+|v|^p}{2} \right) \dx \right)^{1/p} \\
    & = \left( \frac{1}{2} \|u\|_{X}^p + \frac{1}{2} \|v\|_{X}^p -\delta_2\frac{\|u\|_{s,p}^p +\|v\|_{s,p}^p}{2} \right)^{1/p} \\
    & \leq \left( 1 -\delta_2 \frac{\epsilon^p}{2^{p+1}} \right)^{1/p} := 1 - \delta,
\end{align*} 
where $\delta = 1- \left( 1 -\delta_2 \frac{\epsilon^p}{2^{p+1}} \right)^{1/p}>0$. Hence, we conclude that the space $(X, \|\cdot\|_X)$ is uniformly convex for $1<p<2$.
\end{proof}
To fix notation, let $X'$ denote the dual space of $X$, and let $\langle \cdot , \cdot \rangle$ denote the corresponding duality pairing.
By the definition of the norm $\|\cdot\|_{X}$, it is straightforward to verify that the functional $W_{2}$ given by
\[W_{2}(\varphi) = \int_{\mathbb{R}^{N}} w_{2}|\varphi|^{p} \dx ,\]
is continuous on $X$. Moreover, $W_{2}$ is continuously Fréchet differentiable on $X$, and its Fréchet derivative is given by
\[\left\langle W'_{2}(\varphi), v \right\rangle =  p \int_{\mathbb{R}^{N}} w_{2}|\varphi|^{p-2} \varphi v\, \dx. \]
Similarly, by means of the fractional Hardy type inequality, one can verify that the functional $W_1$ is of class $C^1$ on $X$, with Fréchet derivative given by
\[\left\langle W'_{1}( \varphi), v \right\rangle =  p \int_{\mathbb{R}^{N}} w_{1}| \varphi|^{p-2} \varphi v\, \dx. \] 
Thus, for $w_1 \in \mathcal{H}_{s,p,0}(\R^N) \text{ and } w_2 \in L^{1}_{\mathrm{loc}}(\R^N)$, the functional $W$ belongs to $C^{1}(X;\R)$, with Fréchet derivative given by
\[\left\langle W'_{}(\varphi), v \right\rangle =  p \int_{\mathbb{R}^{N}} w_{}| \varphi|^{p-2}\varphi v\, \dx. \] 
It is immediate that for $u \in \mathbb{S},\  \langle W'(u), u \rangle = p$, and consequently $W'(u) \neq 0$. Hence, $1$ is a regular value of $W$, where we recall that $\alpha \in \R$ is called a regular value of $W$, if $W'(\varphi) \neq 0$ for all $\varphi$ such that $W(\varphi)=\alpha$. Moreover, by \cite[Example~27.2]{Deimling1985}, $\mathbb{S}$ admits a $C^1$ Banach sub-manifold structure on $X$.

Next, we verify that the functional $J$ satisfies all the hypotheses of Theorem~\ref{Ljusternik}.
\begin{lemma}
The functional J is $C^1$ on $\mathbb{S}$ and the Fr\"{e}chet derivative of J is given by 
\begin{align*}
    \left\langle J'(u),v \right\rangle = p \int_{\mathbb{R}^{N} \times \mathbb{R}^{N}} \frac{|u(x)-u(y)|^{p-2} (u(x)-u(y)) (v(x) -v(y))}{ |x-y|^{N+sp}} \dxy .
\end{align*}
\end{lemma}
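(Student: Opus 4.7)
The plan is to realize $J$ as the composition of a bounded linear isometry with a standard $C^1$ functional on an $L^p$ space, so that both differentiability and continuity of the derivative follow from well-known facts about $L^p$ norms.

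First, I would introduce the measure $\mathrm{d}\mu(x,y) = |x-y|^{-(N+sp)}\, \dx\dy$ on $\R^N\times\R^N$ and the linear map $T \colon \mathcal{D}^{s,p}(\R^N) \to L^p(\R^{2N}, \mathrm{d}\mu)$ defined by $(Tu)(x,y) = u(x)-u(y)$. By construction $T$ is an isometry, $\|Tu\|_{L^p(\mathrm{d}\mu)}^p = \|u\|_{s,p}^p$, and $J = F\circ T$ where $F(U) = \int_{\R^{2N}} |U|^p\,\mathrm{d}\mu$. Since the embedding $X \hookrightarrow \mathcal{D}^{s,p}(\R^N)$ is continuous, it suffices to prove $F \in C^1(L^p(\mathrm{d}\mu);\R)$ and then apply the chain rule.

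For the Gâteaux derivative of $F$ at $U$ in the direction $V$, I would differentiate $|U+tV|^p$ pointwise and pass the limit under the integral by dominated convergence, using the elementary bound
\begin{equation*}
\left|\frac{|U+tV|^p - |U|^p}{t}\right| \le C_p\bigl(|U|^{p-1}|V| + |V|^{p}\bigr) \quad \text{for } |t|\le 1,
\end{equation*}
which is integrable with respect to $\mathrm{d}\mu$ by Hölder's inequality since $U,V \in L^p(\mathrm{d}\mu)$. This yields $\langle F'(U),V\rangle = p\int |U|^{p-2} U V\,\mathrm{d}\mu$. To upgrade Gâteaux differentiability to Fréchet differentiability of class $C^1$, I would verify that the Nemytskii operator $U \mapsto |U|^{p-2}U$ is continuous from $L^p(\mathrm{d}\mu)$ into $L^{p/(p-1)}(\mathrm{d}\mu)$; this is standard (e.g.\ via Vitali's convergence theorem applied to an a.e.\ convergent subsequence), the key point being that $\bigl||U|^{p-2}U\bigr|^{p/(p-1)} = |U|^p$ so uniform integrability of the image is controlled by that of $|U_n|^p$.

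Combining these steps by the chain rule gives $J'(u) = T^{*}F'(Tu)$, and unwinding the adjoint of $T$ (the pairing $\langle F'(Tu), Tv\rangle$) produces exactly the claimed formula
\begin{equation*}
\langle J'(u), v\rangle = p\int_{\R^N\times\R^N} \frac{|u(x)-u(y)|^{p-2}(u(x)-u(y))(v(x)-v(y))}{|x-y|^{N+sp}}\,\dxy.
\end{equation*}
Finally, since $1$ is a regular value of $W$ on $X$, the level set $\mathbb{S}$ is a $C^1$ Banach submanifold of $X$, and the restriction of a $C^1$ functional to a $C^1$ submanifold is itself $C^1$, with derivative given by the restriction to the tangent space. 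The only mildly technical step is the continuity of the Nemytskii operator above; everything else is a routine transcription of $L^p$ calculus through the isometry $T$, which is why the authors chose to omit the details.
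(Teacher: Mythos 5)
The paper deliberately omits the proof (``We omit the proof as it is straightforward''), so there is no written argument to compare against, but your proposal supplies exactly the standard argument one would expect for this claim and it is correct. The decomposition $J=F\circ T$ with $T\colon u\mapsto u(x)-u(y)$ a linear isometry into $L^p(\R^{2N},\mathrm d\mu)$ and $F(U)=\|U\|_{L^p(\mathrm d\mu)}^p$ is the cleanest route: $T$ is bounded linear (hence $C^\infty$), $F$ is $C^1$ on $L^p$ with $\langle F'(U),V\rangle=p\int|U|^{p-2}UV\,\mathrm d\mu$ (dominated convergence for the Gâteaux derivative, Krasnoselskii/Vitali for continuity of the Nemytskii map $U\mapsto|U|^{p-2}U$ into $L^{p/(p-1)}$), and the chain rule then yields the stated formula. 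One small point worth stating explicitly: the manifold $\mathbb{S}$ sits inside $X$, not inside $\mathcal{D}^{s,p}(\R^N)$, so one must first note---as you do---that the continuous linear embedding $X\hookrightarrow\mathcal{D}^{s,p}(\R^N)$ transports $C^1$-ness of $J$ on $\mathcal{D}^{s,p}(\R^N)$ to $C^1$-ness on $X$, and then restrict to the $C^1$ submanifold $\mathbb{S}$. With that detail included your argument is complete and is indeed the kind of ``straightforward'' verification the authors had in mind.
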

We omit the proof as it is straightforward.
\begin{remark}\label{rem2}
We can deduce from \cite[Proposition 6.4.35]{Drabek} that
\[ \|J'(u)\| = \min_{\lambda \in \mathbb{R}} \| J'(u) - \lambda W'(u)\| . \]
Thus $ J'(u_n) \rightarrow 0$  if and only if there exists a sequence $\{\lambda_n \}$ of real numbers such that $J'(u_n) - \lambda_{n} W'(u_n) \rightarrow 0.$
\end{remark}
\begin{definition}
For $\lambda \in \mathbb{R}^{+}$, we define $A_\lambda : X \rightarrow X'$ as
\[A_\lambda = J' + \lambda W'_{2}.\]
\end{definition}
\noi The following lemma is motivated by Szulkin and Willem \cite[Lemma 4.3]{Szulkin1998}.
\begin{lemma} \label{convergence lemma}
If $u_{n} \rightharpoonup u$ in $X$ and $\langle A_{\lambda}(u_{n}), u_{n}-u \rangle \rightarrow 0$, then $ u_{n} \rightarrow u$ in X.
\end{lemma}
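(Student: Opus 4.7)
The plan is to use a standard monotonicity trick: since $A_\lambda = J' + \lambda W_2'$ is a sum of two operators that are each monotone (in the sense that $\langle A(v) - A(w), v - w\rangle \geq 0$), the hypothesis will reduce to showing that two non-negative quantities each tend to zero, and then pointwise convexity of $t \mapsto |t|^p$ (in its quantitative form) upgrades this to strong convergence in $X$.

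First, because $u_n \rightharpoonup u$ in $X$, the sequence is bounded in $X$; in particular $\|u_n\|_{s,p}$ and $\int_{\R^N} w_2 |u_n|^p \dx$ are uniformly bounded. Since $A_\lambda(u) \in X'$ and $u_n - u \rightharpoonup 0$ in $X$, one has $\langle A_\lambda(u), u_n - u\rangle \to 0$, so subtracting from the hypothesis gives
\[
T_n := \bigl\langle J'(u_n) - J'(u),\, u_n - u\bigr\rangle + \lambda \bigl\langle W_2'(u_n) - W_2'(u),\, u_n - u\bigr\rangle \;\longrightarrow\; 0.
\]
The pointwise inequality $(|a|^{p-2}a - |b|^{p-2}b)(a-b) \geq 0$ (applied with $a = u_n(x)-u_n(y)$, $b = u(x)-u(y)$ under the nonlocal kernel, and with $a = u_n(x)$, $b = u(x)$ under $w_2\dx$) shows that both summands in $T_n$ are non-negative. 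Hence each of them tends to $0$ individually.

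To upgrade this to $\|u_n - u\|_X \to 0$, I would split into two cases. For $p \geq 2$, the quantitative inequality $(|a|^{p-2}a - |b|^{p-2}b)(a-b) \geq 2^{2-p} |a-b|^p$ yields directly
\[
\|u_n - u\|_{s,p}^p + \int_{\R^N} w_2 |u_n - u|^p \dx \;\lesssim\; T_n \;\to\; 0.
\]
For $1 < p < 2$, one uses instead $(|a|^{p-2}a - |b|^{p-2}b)(a-b) \geq (p-1)\,|a-b|^2/(|a|+|b|)^{2-p}$ together with the algebraic identity
\[
|a-b|^p = \left(\frac{|a-b|^2}{(|a|+|b|)^{2-p}}\right)^{p/2} (|a|+|b|)^{p(2-p)/2}
\]
and Hölder's inequality with exponents $2/p$ and $2/(2-p)$, applied to the measure $|x-y|^{-N-sp}\dxy$ (and similarly to $w_2\dx$). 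This gives
\[
\|u_n - u\|_{s,p}^p \;\lesssim\; \bigl\langle J'(u_n) - J'(u),\, u_n - u\bigr\rangle^{p/2} \bigl(\|u_n\|_{s,p}^p + \|u\|_{s,p}^p\bigr)^{(2-p)/2},
\]
and an analogous bound for $\int_{\R^N} w_2 |u_n - u|^p \dx$. The second factor is bounded by the first step, so the right side tends to $0$.

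The main obstacle is the sub-quadratic case $1 < p < 2$: the monotonicity estimate is only a weighted $L^2$-type control, and one has to run the Hölder decomposition carefully, checking that the factor $(|u_n(x)-u_n(y)| + |u(x)-u(y)|)^{p(2-p)/2}$ can be integrated against $|x-y|^{-N-sp}$ (and the weighted analogue against $w_2 \dx$) with a bound depending only on $\|u_n\|_X$ and $\|u\|_X$. Once this is in place, combining the two cases gives $\|u_n - u\|_X \to 0$, as required.
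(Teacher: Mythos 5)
Your proposal is correct, and it is a genuinely different route from the paper's. You both begin the same way: pass from $\langle A_\lambda(u_n), u_n - u\rangle \to 0$ to $\langle A_\lambda(u_n) - A_\lambda(u), u_n - u\rangle \to 0$ (using $A_\lambda(u)\in X'$ and $u_n - u \rightharpoonup 0$), split this into the $J'$-piece and the $W_2'$-piece, and observe that monotonicity of $t\mapsto|t|^{p-2}t$ makes each piece non-negative, hence each tends to zero separately. After that you diverge. You invoke the pointwise \emph{quantitative} monotonicity estimates (the Simon-type inequalities): for $p\geq 2$, $(|a|^{p-2}a-|b|^{p-2}b)(a-b)\gtrsim |a-b|^p$ gives a direct bound on $\|u_n-u\|_X^p$; for $1<p<2$, the weaker $(|a|^{p-2}a-|b|^{p-2}b)(a-b)\gtrsim |a-b|^2/(|a|+|b|)^{2-p}$ plus H\"older with exponents $2/p$ and $2/(2-p)$ (and the uniform $X$-bound on $u_n$, $u$) bounds $\|u_n-u\|_X^p$ by a power of the vanishing quantity. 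The paper instead bounds each monotonicity quantity \emph{from below} by a ``difference of powers'' factorization obtained from H\"older in the duality pairing, e.g.\ $\langle W_2'(u_n)-W_2'(u),u_n-u\rangle/p \geq (a_n^{(p-1)/p}-a^{(p-1)/p})(a_n^{1/p}-a^{1/p})$ with $a_n=\int w_2|u_n|^p\,\dx$; since this tends to zero, $a_n\to a$, and likewise $\|u_n\|_{s,p}\to\|u\|_{s,p}$. This yields $\|u_n\|_X\to\|u\|_X$, and then the uniform convexity of $X$ (proved in the preceding lemma) together with weak convergence gives strong convergence by the Radon--Riesz property. Your argument is more self-contained (it does not re-use the uniform convexity of $X$) at the cost of the case split on $p$; the paper's is cleaner given the machinery already built, and avoids the $p\geq 2$ versus $1<p<2$ dichotomy entirely at this stage. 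Both are sound.
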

\begin{proof}
Clearly, $\langle A_{\lambda}(u_{n})- A_{\lambda}(u), u_{n}-u \rangle \rightarrow 0$. 
We rewrite
\begin{align*}
    \left\langle A_{\lambda}(u_{n})- A_{\lambda}(u), u_{n}-u \right\rangle = B_n + \lambda C_n,
\end{align*}
where $B_n = \langle J'(u_n) - J'(u), u_n -u \rangle $ and $C_n = \langle W'_{2}(u_n) - W'_{2}(u), u_n -u \rangle$.
By Hölder's inequality, we have {\scriptsize
\begin{align*}
 \frac{C_n}{p} &=  \int_{\mathbb{R}^{N}} w_{2} \left(|u_{n}|^{p-2}u_{n} - |u|^{p-2}u\right)(u_{n}-u)\, \dx \\
 &=\int_{\mathbb{R}^{N}} w_{2} (|u_{n}|^{p}+ |u|^{p})\,\dx - \int_{\mathbb{R}^{N}} w_{2}|u_{n}|^{p-2}u_{n}u \,\dx - \int_{\mathbb{R}^{N}} w_{2}|u|^{p-2}u u_{n}\, \dx\\
 &\geq \int_{\mathbb{R}^{N}} w_{2} \left(|u_{n}|^{p}+ |u|^{p}\right)\,\dx - \left( \int_{\mathbb{R}^{N}} w_{2}|u_{n}|^{p} \,\dx \right)^{\frac{p-1}{p}} \left( \int_{\mathbb{R}^{N}} w_{2}|u|^{p}\,\dx \right)^{\frac{1}{p}} -\left( \int_{\mathbb{R}^{N}} w_{2}|u|^{p}\,\dx \right)^{\frac{p-1}{p}} \left( \int_{\mathbb{R}^{N}} w_{2}|u_{n}|^{p}\,\dx \right)^{\frac{1}{p}}\\
 &= \left[ \left( \int_{\mathbb{R}^{N}} w_{2}|u_{n}|^{p} ~\dx \right)^{\frac{p-1}{p}} - \left( \int_{\mathbb{R}^{N}} w_{2}|u|^{p}\,\dx \right)^{\frac{p-1}{p}} \right] \left[ \left( \int_{\mathbb{R}^{N}} w_{2}|u_{n}|^{p} \,\dx \right)^{\frac{1}{p}} - \left( \int_{\mathbb{R}^{N}} w_{2}|u|^{p} \,\dx \right)^{\frac{1}{p}} \right] \geq 0.
 \end{align*}}
Applying Hölder's inequality once again, we obtain \tk{
\begin{align*}
     \frac{B_n}{p} =
    & \|u_n\|_{s,p}^p + \|u\|_{s,p}^p - \int_{\mathbb{R}^{N} \times \mathbb{R}^{N}}  \frac{|u_{n}(x)-u_{n}(y)|^{p-2}(u_{n}(x)-u_{n}(y))(u(x)-u(y))}{|x-y|^{N+sp}}\dxy \\
    &\quad- \int_{\mathbb{R}^{N} \times \mathbb{R}^{N}} \frac{|u(x)-u(y)|^{p-2}(u(x)-u(y)) (u_{n}(x)-u_{n}(y))}{|x-y|^{N+sp}} \dxy\\
    &\geq  \|u_n\|_{s,p}^p + \|u\|_{s,p}^p -\|u_n\|_{s,p}^{p-1} \|u\|_{s,p} -  \|u\|_{s,p}^{p-1} \|u_n\|_{s,p}\\
    &= \left( \|u_n\|_{s,p}^{p-1}- \|u\|_{s,p}^{p-1}\right) \left( \|u_n\|_{s,p}- \|u\|_{s,p}\right)\\
    & \geq 0.
\end{align*}}
Since $\langle A_{\lambda}(u_{n})- A_{\lambda}(u), u_{n}-u \rangle \rightarrow 0$ as $n \rightarrow \infty$ and the sequences $B_n$ and $C_n$ are nonnegative, we deduce that
\begin{align*}
    \lim_{n\rightarrow \infty} B_n = 0 =\lim_{n\rightarrow \infty} C_n.
\end{align*}
This implies
\begin{align*}
  \int_{\mathbb{R}^{N}} w_{2}|u_{n}|^{p}\,\dx \rightarrow \int_{\mathbb{R}^{N}} w_{2}|u|^{p} \,\dx, \quad \text{ and }\quad    \|u_n\|_{s,p} \rightarrow \|u\|_{s,p} \quad \text{ as } n \rightarrow \infty.
\end{align*}
Consequently, $\|u_{n}\|_{X} \rightarrow \|u\|_{X}$ and therefore $u_{n} \rightarrow u $ in X.
\end{proof}
\begin{lemma}
For $w_{1} \in \mathcal{H}_{s,p,0}(\mathbb{R}^{N})$, the map $W'_{1} : X \rightarrow X' $ is compact.
\end{lemma}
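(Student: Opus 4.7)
The plan is to show that any weakly convergent sequence in $X$ is mapped by $W_1'$ to a strongly convergent sequence in $X'$. Let $u_n \rightharpoonup u$ weakly in $X$; since the inclusion $X \hookrightarrow \mathcal{D}^{s,p}(\mathbb{R}^N)$ is continuous (by the definition $\|u\|_X^p = \|u\|_{s,p}^p + \int w_2|u|^p\,dx$ with $w_2 \geq 0$), we also have $u_n \rightharpoonup u$ in $\mathcal{D}^{s,p}(\mathbb{R}^N)$, and by the compact embedding $\mathcal{D}^{s,p}(\mathbb{R}^N) \hookrightarrow L^p_{loc}(\mathbb{R}^N)$, passing to a subsequence, $u_n \to u$ a.e. in $\mathbb{R}^N$.

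The first main step is to upgrade weak convergence in $\mathcal{D}^{s,p}(\mathbb{R}^N)$ to strong convergence in $L^p(\mathbb{R}^N; w_1\,dx)$. Since $w_1 \in \mathcal{H}_{s,p,0}(\mathbb{R}^N)$, Theorem \ref{allinone1} yields compactness of $W_1$ on $\mathcal{D}^{s,p}(\mathbb{R}^N)$, hence
\[
\int_{\mathbb{R}^N} w_1 |u_n|^p\,dx \longrightarrow \int_{\mathbb{R}^N} w_1 |u|^p\,dx.
\]
Combining this with a.e. convergence and the Br\'ezis--Lieb lemma applied to the measure space $(\mathbb{R}^N, w_1\,dx)$, we obtain $\|u_n - u\|_{L^p(w_1\,dx)} \to 0$.

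The second step is to transfer this to strong convergence of $|u_n|^{p-2}u_n$ in $L^{p'}(w_1\,dx)$, where $p' = p/(p-1)$. Since the Nemytskii operator $v \mapsto |v|^{p-2}v$ is continuous from $L^p(\mu)$ to $L^{p'}(\mu)$ for any measure $\mu$ (a routine consequence of the a.e. convergence of subsequences together with the identity $\int |v|^{p-2}v|^{p'}\,d\mu = \int |v|^p\,d\mu$), we get
\[
\int_{\mathbb{R}^N} w_1 \big||u_n|^{p-2}u_n - |u|^{p-2}u\big|^{p'}\,dx \longrightarrow 0.
\]

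Finally, for $v \in X$ with $\|v\|_X \leq 1$, H\"older's inequality and Theorem \ref{H1} give
\begin{align*}
|\langle W_1'(u_n) - W_1'(u), v\rangle|
&\leq p\int_{\mathbb{R}^N} w_1 \big||u_n|^{p-2}u_n - |u|^{p-2}u\big|\,|v|\,dx \\
&\leq p\,\Big(\textstyle\int w_1\big||u_n|^{p-2}u_n - |u|^{p-2}u\big|^{p'}dx\Big)^{1/p'}
\Big(\textstyle\int w_1 |v|^p dx\Big)^{1/p} \\
&\leq p\,C_H^{1/p}\|w_1\|_{\mathcal{H}_{s,p}(\mathbb{R}^N)}^{1/p}\,\|v\|_{s,p}\,
\Big(\textstyle\int w_1\big||u_n|^{p-2}u_n - |u|^{p-2}u\big|^{p'}dx\Big)^{1/p'}.
\end{align*}
Since $\|v\|_{s,p} \leq \|v\|_X \leq 1$, taking the supremum over such $v$ and using the previous step shows $\|W_1'(u_n) - W_1'(u)\|_{X'} \to 0$. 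Because the argument was performed along an arbitrary subsequence, a standard subsequence principle yields convergence of the full sequence, proving compactness.

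I expect the main obstacle to be the upgrade from weak convergence in $\mathcal{D}^{s,p}(\mathbb{R}^N)$ to strong convergence in $L^p(w_1\,dx)$: Theorem \ref{allinone1} alone only gives convergence of norms, and one must combine it with the a.e. convergence together with Br\'ezis--Lieb to extract strong convergence. The rest of the argument (continuity of the Nemytskii operator and duality via Hardy) is routine once this step is in place.
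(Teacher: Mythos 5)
Your proof is correct, but it is a genuinely different route from the paper's. The paper attacks the key quantity $\int_{\R^N} w_1\bigl||u_n|^{p-2}u_n-|u|^{p-2}u\bigr|^{p'}\dx$ directly by an approximation-in-the-weight argument: it writes $w_1 = w_\epsilon + (w_1 - w_\epsilon)$ with $w_\epsilon\in C_c^\infty$, bounds the tail $\int(w_1-w_\epsilon)(\cdot)\dx \leq 2^{1/(p-1)}CK\,\norm{w_1-w_\epsilon}_{\mathcal{H}_{s,p}(\R^N)}$ using the Hardy inequality, and kills the local part via the compact embedding $X\hookrightarrow L^p_{loc}(\R^N)$. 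You instead first establish strong convergence $u_n\to u$ in $L^p(w_1\dx)$ by combining the already-proved compactness of $W_1$ on $\mathcal{D}^{s,p}(\R^N)$ (Theorem \ref{allinone1}) with Br\'ezis--Lieb and a.e.\ convergence, and then invoke continuity of the Nemytskii map $v\mapsto|v|^{p-2}v$ from $L^p(w_1\dx)$ to $L^{p'}(w_1\dx)$. Both approaches are sound; yours is more modular (it reuses Theorem \ref{allinone1} as a black box and isolates the intermediate result $\norm{u_n-u}_{L^p(w_1\dx)}\to 0$, which is of independent interest), while the paper's is more self-contained and avoids appealing to Nemytskii-operator theory or a separate Br\'ezis--Lieb step. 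Do note two small points you hand-wave: (a) the Nemytskii continuity should be justified explicitly, e.g.\ via the Riesz--Scheff\'e/Br\'ezis--Lieb argument you hint at, since it is the crux of the proof; and (b) the final passage from a subsequence to the full sequence is needed because you extracted a subsequence for a.e.\ convergence, and should be stated in the standard form (every subsequence has a further subsequence along which $W_1'(u_{n_k})\to W_1'(u)$, with a limit independent of the subsequence).
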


\begin{proof}
Let $u_{n} \rightharpoonup u$ in $X$ and $v \in X$. For $w_{1} \in \mathcal{H}_{s,p,0}(\mathbb{R}^{N})$, by Theorem \ref{H1} we have 
\begin{equation} \label{Sobolev type inequality}
    \| w_{1}^{\frac{1}{p}} u \|_{p}  \leq C \|w_{1}\|^{\frac{1}{p}}_{\mathcal{H}_{s,p}(\R^N)} \|u\|_{s,p},
\end{equation}  
where the constant $C>0$ depends on $N,s $ and $p$ only. Applying Hölder's inequality yields
\begin{align*}
    \left|\left\langle W_{1}'(u_n) - W'_{1}(u), v \right\rangle\right|
    &\leq ~\int_{\mathbb{R}^{N}} w_{1} \left| |u_{n}|^{p-2}u_{n} - |u|^{p-2}u \right| |v| \,\dx  \\
    &\leq  \left(\int_{\mathbb{R}^{N}} w_{1} \left||u_{n}|^{p-2}u_{n} -|u|^{p-2}u \right|^{\frac{p}{p-1}} \dx \right)^{\frac{p-1}{p}}  \left(\int_{\mathbb{R}^{N}} w_{1} |v|^{p}\, \dx \right)^{\frac{1}{p}} \\
    &\leq C~ \|w_{1}\|^{\frac{1}{p}}_{\mathcal{H}_{s,p}} \|v\|_{s,p} \left(\int_{\mathbb{R}^{N}} w_{1} \left||u_{n}|^{p-2}u_{n} -|u|^{p-2}u\right|^{\frac{p}{p-1}}\, \dx \right)^{\frac{p-1}{p}}.
\end{align*}
Thus
\begin{align}\label{4.20}
    \left\|W'_{1}(u_{n}) - W'_{1}(u)\right\| \leq C ~\|w_{1}\|^{\frac{1}{p}}_{\mathcal{H}_{s,p}} \left(\int_{\mathbb{R}^{N}} w_{1} \left||u_{n}|^{p-2}u_{n} -|u|^{p-2}u\right|^{\frac{p}{p-1}} \,\dx \right)^{\frac{p-1}{p}}.
\end{align}
It suffices to prove that the right hand side integral in the above estimate tends to zero as $n$ goes to infinity.
Let $\epsilon>0$ and $w_{\epsilon} \in C_{c}^{\infty}(\mathbb{R}^{N})$ which will be specified later. Applying the triangle inequality, we obtain
\begin{align}\label{Integral with w}
&\int_{\mathbb{R}^{N}} w_{1} \left||u_{n}|^{p-2}u_{n} -|u|^{p-2}u\right|^{\frac{p}{p-1}}\, \dx \notag\\
&\leq \int_{\mathbb{R}^{N}} |w_{\epsilon}| \left||u_{n}|^{p-2}u_{n} -|u|^{p-2}u\right|^{\frac{p}{p-1}}\, \dx + \int_{\mathbb{R}^{N}} |w_{1} -w_{\epsilon}| \left||u_{n}|^{p-2}u_{n} -|u|^{p-2}u\right|^{\frac{p}{p-1}}\, \dx.
\end{align}
Define $K := \sup \left\{\|u_{n}\|_{s,p}^{p} + \|u\|_{s,p}^{p}: n \in \mathbb{N}\right\}$. Then $K$ is finite since $\{u_{n}\}$ is bounded in $X$. For the second integral,
\begin{align}\label{4.21}
\int_{\mathbb{R}^{N}} |w_{1} -w_{\epsilon}| \left| |u_{n}|^{p-2}u_{n} -|u|^{p-2}u\right|^{\frac{p}{p-1}} \,\dx
&\leq \int_{\mathbb{R}^{N}} |w_{1} -w_{\epsilon}| \left(|u_{n}|^{p-1} +|u|^{p-1}\right)^{\frac{p}{p-1}}\, \dx \notag \\
&\leq 2^{\frac{1}{p-1}} \left( \int_{\mathbb{R}^{N}} |w_{1} -w_{\epsilon}| |u_{n}|^{p} \,\dx + \int_{\mathbb{R}^{N}} |w_{1} -w_{\epsilon}||u|^{p} \,\dx \right) \notag \\
    &\leq 2^{\frac{1}{p-1}} C  \|w_{1} - w_{\epsilon}\|_{\mathcal{H}_{s,p}} \left(\|u_{n}\|_{s,p}^{p}+ \|u\|_{s,p}^{p}\right) \notag\\
    &\leq 2^{\frac{1}{p-1}} C K \|w_{1} - w_{\epsilon}\|_{\mathcal{H}_{s,p}}.
\end{align}
As $w_{1} \in \mathcal{H}_{s,p,0}(\mathbb{R}^{N})$, we can choose $w_{\epsilon} \in C_{c}^{\infty}(\mathbb{R}^{N})$ satisfying
\begin{align}\label{4.22}
    2^{\frac{1}{p-1}} K \|w_{1} - w_{\epsilon}\|_{\mathcal{H}_{s,p}} < \frac{\epsilon}{2C}.
\end{align}
From \eqref{4.21} and \eqref{4.22}, we deduce that the second integral in \eqref{Integral with w} is less than $\frac{\epsilon}{2}$. 
Since $X$ is compactly embedded in $L^{p}_{\mathrm{loc}}(\mathbb{R}^N)$, the first integral converges to zero up to a subsequence $\{u_{n_k}\} \subset \{u_n\}$. 
Hence, there exists $k_0 \in \mathbb{N}$ such that
\begin{align*}
\int_{\mathbb{R}^{N}} w_{1} \left||u_{n_{k}}|^{p-2}u_{n_{k}} -|u|^{p-2}u\right|^{\frac{p}{p-1}}\, \dx < \epsilon, \quad\forall ~k>k_{0}.
\end{align*}
We conclude from the uniqueness of sequential limits that
\begin{align}\label{4.24}
\int_{\mathbb{R}^{N}} w_{1} \left||u_{n}|^{p-2}u_{n} -|u|^{p-2}u\right|^{\frac{p}{p-1}}\, \dx ~\rightarrow ~0\quad \text{ as }n \rightarrow \infty.
\end{align}
From \eqref{4.20} and \eqref{4.24}, we get the desired result.
\end{proof}

Next, we prove that the functional $J$ satisfies the Palais–Smale condition on $\mathbb{S}$.
\begin{proposition}
The functional $J$ satisfies the Palais-Smale (PS) condition on $\mathbb{S}$.
\end{proposition}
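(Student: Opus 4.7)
The plan is to reduce the Palais--Smale condition to an application of Lemma \ref{convergence lemma} by identifying the Lagrange multipliers in the limit. Let $\{u_n\} \subset \mathbb{S}$ be a Palais--Smale sequence, so $J(u_n) \to c$ and $J'(u_n) \to 0$ in the restricted sense on $\mathbb{S}$. Since $J \geq \lambda_1 > 0$ on $\mathbb{S}$, necessarily $c \geq \lambda_1 > 0$. The boundedness step is immediate: $J(u_n) \to c$ gives $\|u_n\|_{s,p}^p$ bounded, and the identity $\int_{\R^N} w_2 |u_n|^p \dx = \int_{\R^N} w_1 |u_n|^p \dx - 1$ combined with Theorem \ref{H1} applied to $w_1 \in \mathcal{H}_{s,p,0}(\R^N)$ bounds the $w_2$-weighted integral. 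Hence $\{u_n\}$ is bounded in $X$, and reflexivity of $X$ allows me to extract a subsequence with $u_n \wra u$ in $X$.

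Next, Remark \ref{rem2} gives a sequence $\{\lambda_n\} \subset \R$ with $J'(u_n) - \lambda_n W'(u_n) \to 0$ in $X'$. Testing against the bounded sequence $\{u_n\}$, and using $\langle J'(u_n), u_n \rangle = p J(u_n)$ together with $\langle W'(u_n), u_n \rangle = p$ (from $u_n \in \mathbb{S}$), I obtain $\lambda_n - J(u_n) \to 0$, hence $\lambda_n \to c$. Decomposing $W' = W'_1 - W'_2$, this rewrites as
\[
A_{\lambda_n}(u_n) = J'(u_n) + \lambda_n W'_2(u_n) = \lambda_n W'_1(u_n) + o(1) \text{ in } X'.
\]

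The payoff comes from testing against $u_n - u$. Compactness of $W'_1 \colon X \to X'$ yields $W'_1(u_n) \to W'_1(u)$ strongly in $X'$, and $u_n - u \wra 0$ in $X$ forces $\langle W'_1(u_n), u_n - u \rangle \to 0$; hence $\langle A_{\lambda_n}(u_n), u_n - u \rangle \to 0$. To feed this into Lemma \ref{convergence lemma}, which is stated for a fixed $\lambda$, I use the decomposition
\[
\langle A_c(u_n), u_n - u \rangle = \langle A_{\lambda_n}(u_n), u_n - u \rangle - (\lambda_n - c)\langle W'_2(u_n), u_n - u \rangle,
\]
where $\{W'_2(u_n)\}$ is bounded in $X'$ by the H\"older estimate $\|W'_2(v)\|_{X'} \leq p \|v\|_X^{p-1}$. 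Since $\lambda_n \to c$, the second term vanishes, so $\langle A_c(u_n), u_n - u \rangle \to 0$ with $c > 0$. Lemma \ref{convergence lemma} then produces $u_n \to u$ in $X$, and continuity of $W$ on $X$ gives $W(u) = 1$, so $u \in \mathbb{S}$.

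The main obstacle is the $n$-dependence of the multipliers $\lambda_n$, whereas Lemma \ref{convergence lemma} is formulated for a single fixed $\lambda$. This is handled by explicitly identifying $\lambda_n \to c$ via the Rayleigh-type testing computation and then using the uniform $X'$-bound on $W'_2(u_n)$ to transfer the $o(1)$ conclusion from $A_{\lambda_n}$ to $A_c$; all the remaining ingredients (boundedness, compactness of $W'_1$, continuity of $W$, reflexivity of $X$) are already in place.
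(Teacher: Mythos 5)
Your proof is correct and follows essentially the same route as the paper: obtain boundedness in $X$ from the constraint identity and Theorem \ref{H1}, extract the Lagrange multipliers $\lambda_n$ via Remark \ref{rem2}, identify $\lambda_n \to c$ by pairing with $u_n$, use compactness of $W_1'$ to pass the $o(1)$ from $A_{\lambda_n}$ to $A_c$, and then close with Lemma \ref{convergence lemma}. The only cosmetic differences are that you explicitly record $c \geq \lambda_1 > 0$ (which is in fact needed since $A_\lambda$ and Lemma \ref{convergence lemma} presuppose $\lambda > 0$, a point the paper leaves implicit) and you verify $u \in \mathbb{S}$ at the end via continuity of $W$ on $X$, whereas the paper gets $\int_{\R^N} w|u|^p \geq 1$ earlier via Fatou's lemma.
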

 \begin{proof}
Let $\{u_n\} \subset \mathbb{S}$ be a sequence such that $J(u_n) \to \lambda$ and
$J'(u_n) \to 0
\text{ as } n \to \infty$.
Then, by Remark~\ref{rem2}, there exists a sequence $\{\lambda_n\} \subset \mathbb{R}$ such that
\begin{equation} \label{Palais smale condition}
   J'(u_n) - \lambda_{n} W'(u_n) \rightarrow 0 \quad \text{ in }X' \text{ as } n \rightarrow \infty. 
\end{equation}
Since $\{J(u_n)\}$ is bounded in $\R$ and $\int_{\mathbb{R}^N} w\,|u_n|^p \,\dx > 0,$
together with the inequality
\begin{align}\label{L1}
\int_{\mathbb{R}^N} w_2\,|u_n|^p \,\dx
&< \int_{\mathbb{R}^N} w_1\,|u_n|^p \,\dx
\le C\,\|w_1\|_{\mathcal{H}_{s,p}}\,\|u_n\|_{s,p}^p,
\end{align}
we deduce that $\{W_2(u_n)\}$ is bounded in $\mathbb{R}$. Consequently, $\{u_n\}$ is bounded in $X$. As $X$ is reflexive, there exists a subsequence (still denoted by $\{u_n\}$) and some $u \in X$ such that $u_n \rightharpoonup u $ in $X$. Since $X$ is continuously embedded in $\D^{s,p}(\R^N)$, $W_1$ is compact on $X$. Thus, $W_1(u_n) \rightarrow W_1(u)$ in $\R$. 

Now Fatou's Lemma yields
\begin{align} \label{L2}
    \int_{\mathbb{R}^{N}} w_{2} |u|^{p} \dx \leq  \liminf_{n} \int_{\mathbb{R}^{N}} w_{1} |u_n|^{p} \dx - 1 = \int_{\mathbb{R}^{N}} w_{1} |u|^{p} \dx - 1.
\end{align}
Thus $\int_{\mathbb{R}^{N}} w |u|^{p}\, \dx \geq 1 $, and hence $ u \neq 0.$ Moreover, $\lambda_n \to \lambda$ as $n \to \infty$, since
\begin{align*}
    p \left(J(u_n) - \lambda_n \right)  = \left\langle J'(u_n) - \lambda_n W'(u_n), u_n \right\rangle ~ \ra ~0, \text{ as } n \ra \infty.
\end{align*}
We rewrite \eqref{Palais smale condition} as
\begin{align*}
    A_{\lambda_n}(u_n) - \lambda_n W'_{1}(u_n) \rightarrow 0 ~ \text{as}~n \rightarrow \infty.
\end{align*}
Since $\lambda_n \to \lambda$, it follows that $A_{\lambda_n}(u_n) - A_{\lambda}(u_n) \rightarrow 0$ in $X'$. The compactness of $W'_1$ implies that $A_{\lambda}(u_n)$ converges strongly in $X'$, and therefore $\langle A_{\lambda}(u_n), u_{n} - u \rangle \rightarrow 0$. As $u_n \rightharpoonup u$ in $X$, Lemma~\ref{convergence lemma} yields $u_{n} \rightarrow u$ in $X$.
 \end{proof} 
Next, we state the following Lemma:
\begin{lem}[{{\cite[Lemma 5.9]{{anoop-p}}}}]
The set $\Gamma_{n}$ is non-empty for each $n \in \mathbb{N}$.
\end{lem}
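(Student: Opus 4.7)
The plan is to construct, for each $n \in \N$, an $n$-dimensional subspace $V_n \subset \D^{s,p}(\R^N)$ on which $u \mapsto \int_{\R^N} w|u|^p \dx$ is strictly positive away from $0$, and then to radially project the $\|\cdot\|_{s,p}$-unit sphere of $V_n$ onto $\mathbb{S}$ to obtain an element of $\Gamma_n$.

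The heart of the matter is producing $n$ pairwise disjointly supported $\phi_1,\dots,\phi_n \in C_c^\infty(\R^N)$ with $a_i := \int_{\R^N} w|\phi_i|^p\,\dx > 0$. The Prashanth-type result invoked earlier in the paper furnishes a single $\phi \in C_c^\infty(\R^N)$ with $\int_{\R^N} w|\phi|^p\,\dx > 0$, and I will multiply this to several disjoint copies via Lebesgue differentiation. Setting $\tilde{w}:=w|\phi|^p \in L^1_{loc}(\R^N)$, the condition $\int_{\R^N}\tilde{w}\,\dx>0$ forces the set of Lebesgue points $x_0$ of $\tilde{w}$ at which $\tilde{w}(x_0)>0$ to have positive Lebesgue measure. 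Choose $n$ distinct such points $x_1,\dots,x_n$ and a radius $r>0$ small enough that the balls $B_r(x_i)$ are pairwise disjoint and each satisfies $\int_{B_r(x_i)}\tilde{w}\,\dx>0$; a smooth cut-off $\phi_i$ supported in $B_r(x_i)$ close enough to $\chi_{B_r(x_i)}$ then yields $\int_{\R^N}w|\phi_i|^p\,\dx>0$ by absolute continuity of the integral.

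With $V_n := \operatorname{span}\{\phi_1,\dots,\phi_n\}$ the disjoint supports give $|u|^p = \sum_{i=1}^n |c_i|^p |\phi_i|^p$ pointwise for $u = \sum_i c_i\phi_i$, hence $\int_{\R^N} w|u|^p\,\dx = \sum_i |c_i|^p a_i > 0$ whenever $u\ne 0$. This makes the radial normalization
\[\Psi(u):=\frac{u}{\left(\int_{\R^N} w|u|^p\,\dx\right)^{1/p}}\]
a well-defined, continuous, and odd map from $V_n \setminus\{0\}$ into $\mathbb{S}$. Let $\Sigma$ denote the $\|\cdot\|_{s,p}$-unit sphere of $V_n$, compact, symmetric, and homeomorphic to $S^{n-1}$ (so $\gamma(\Sigma)=n$). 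Then $A:=\Psi(\Sigma)$ is compact and symmetric in $\mathbb{S}$; injectivity of $\Psi|_\Sigma$ follows since $\Psi(u)=\Psi(v)$ for $u,v\in \Sigma$ forces $u=\lambda v$ with $\lambda>0$, and $\|u\|_{s,p}=\|v\|_{s,p}$ then gives $\lambda=1$. A continuous bijection out of a compact Hausdorff space is a homeomorphism, so $\Psi|_\Sigma:\Sigma\to A$ is an odd homeomorphism, yielding $\gamma(A)=\gamma(\Sigma)=n$ and therefore $A\in\Gamma_n$.

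I expect the only non-routine point to be the splitting step that promotes one test function to $n$ disjointly supported ones; once $V_n$ is in hand, the construction of $A$ and the genus computation are formal consequences of the finite-dimensional structure of $V_n$ and the standard properties of the Krasnoselski genus.
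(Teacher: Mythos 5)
Your overall strategy — produce $n$ disjointly supported test functions $\phi_i$ with $\int_{\R^N} w|\phi_i|^p\,\dx>0$, span them, and send the unit sphere of the span into $\mathbb{S}$ by radial normalization — is the standard construction, and it is the one the cited reference relies on; the genus computation at the end is correct and routine. There is, however, one step in the localization that does not go through as written. You establish $\int_{B_r(x_i)}\tilde{w}\,\dx>0$ with $\tilde{w}=w|\phi|^p$, and then claim that a smooth cut-off $\phi_i$ close to $\chi_{B_r(x_i)}$ gives $\int_{\R^N} w|\phi_i|^p\,\dx>0$ "by absolute continuity." But as $\phi_i\to\chi_{B_r(x_i)}$ the integral $\int_{\R^N} w|\phi_i|^p\,\dx$ tends to $\int_{B_r(x_i)} w\,\dx$, not to $\int_{B_r(x_i)}\tilde{w}\,\dx$, and there is no reason the former should be positive just because the latter is (on $B_r(x_i)$ the sign-changing weight $w$ could integrate to something negative while the $|\phi|^p$-weighted integral is positive).

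The fix is minor and preserves the structure of your argument, in either of two ways. One can take $\phi_i:=\phi\,\eta_i$ with $\eta_i\in C_c^\infty(B_r(x_i))$, $0\le\eta_i\le 1$, $\eta_i\to\chi_{B_r(x_i)}$; then $\int_{\R^N} w|\phi_i|^p\,\dx=\int_{\R^N}\tilde{w}\,|\eta_i|^p\,\dx\to\int_{B_r(x_i)}\tilde{w}\,\dx>0$, and the supports of $\phi_1,\dots,\phi_n$ remain disjoint. Alternatively, note that at a Lebesgue point $x_0$ of $\tilde{w}$ with $\tilde{w}(x_0)>0$ one has $\phi(x_0)\ne 0$, hence $w(x_0)>0$; choosing $x_0$ to additionally be a Lebesgue point of $w$ (still almost every point) and $r$ small enough that $B_r(x_0)\subset\{\phi\ne 0\}$, Lebesgue differentiation applied to $w$ itself gives $\int_{B_r(x_0)} w\,\dx>0$, after which your $\phi_i\approx\chi_{B_r(x_i)}$ does yield $\int_{\R^N} w|\phi_i|^p\,\dx>0$. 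With either repair the rest of the argument — disjoint supports giving $\int_{\R^N} w|u|^p\,\dx=\sum_i|c_i|^p a_i>0$ on $V_n\setminus\{0\}$, the odd continuous bijection $\Psi|_\Sigma$ being a homeomorphism by compactness, and $\gamma(A)=\gamma(\Sigma)=n$ — is sound, so $A\in\Gamma_n$ as claimed.
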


Finally, we prove the existence of an infinite set of eigenvalues for (\ref{Weighetd eigenvalue problem}) by employing the Ljusternik-Schnirelmann theorem on $C^1$-manifold.
\begin{proof}[Proof of theorem \ref{Infinite eigenvalue}]
Recall that the functional $J$ and the set $\mathbb{S}$ satisfy all the conditions of Theorem \ref{Ljusternik}. Therefore, we get $\gamma(K_{\lambda_{j}}) \geq 1$ for each $j \in \mathbb{N}$. Thus $K_{\lambda_{j}} \neq \emptyset$ and hence there exists $u_{j} \in \mathbb{S}$ such that $J'(u_{j}) = 0$ and $J(u_{j}) = \lambda_{j}$. Therefore, $\lambda_{j}$ is an eigenvalue and $u_{j}$ is the corresponding eigenfunction for (\ref{Weighetd eigenvalue problem}).
Recall that X is separable \cite[Lemma 2.1]{Cui} and hence X admits a bi-orthogonal system $\{ e_{m}, e_{m}^{*}\}$ such that 
\begin{align*}
    \overline{ \{ e_{m} : m \in \mathbb{N}  \}} = X,\quad e_{m}^{*} \in X',\quad \langle {e_{n}},e_{m}^{*}\rangle = \delta_{n,m} \\
    \langle e_{m}^{*}, x \rangle = 0,\quad \forall m \implies x=0.
\end{align*}
Let $E_{n} = span \{ e_{1}, e_{2}, ... ,e_{n} \}$ and let $E_{n}^{\perp} = \overline{span \{ e_{n+1}, e_{n+2}, ...  \}}$. Since  $E_{n-1}^{\perp}$ is of co-dimension $(n-1)$, for any $A \in \Gamma_{n}$ we have $A \cap E_{n-1}^{\perp} \neq \emptyset.$ Let 
\begin{align*}
   \mu_{n} = \inf_{A \in \Gamma_{n}} \sup_{A \cap E_{n-1}^{\perp}} J(u),\quad n=1,2,... 
\end{align*}
We now prove that $\mu_n \to \infty$ as $n \to \infty$. On contrary suppose that $\{ \mu_{n} \}$ is bounded, then there exists $u_{n} \in E_{n-1}^{\perp} \cap \mathbb{S}$ such that $\mu_{n} \leq J(u_{n}) <c$, for some constant $c >0$. Since $u_{n} \in \mathbb{S}$, the sequence $\{u_{n}\}$ is bounded in $X$ by estimate \eqref{L1}. Hence, there exists $u \in X$ such that $u_{n} \rightharpoonup u$ in $X$. By the choice of the biorthogonal system $\{e_m, e_m^*\}$, we have for each $m$, $\langle e_{m}^{*}, u_{n} \rangle \rightarrow 0$ as $n \rightarrow \infty$. Thus $u_{n} \rightharpoonup 0$ in X and hence $u = 0$, a contradiction to $ \int_{\mathbb{R}^{N}} w|u|^{p} \dx \geq 1$ (See the conclusion of \eqref{L2}). Therefore, $\mu_{n} \rightarrow \infty$ as $n \ra \infty$ and $\lambda_{n} \rightarrow \infty \ \text{as } n \ra \infty, \ \text{since }\mu_{n} \leq \lambda_{n}$. Finally, the first eigenvalue is simple by Lemma~\ref{Simple} and principal by Lemma~\ref{Positive}.  
This completes the proof.
\end{proof} 


\section*{Acknowledgements}
 {The project was mostly completed when U.D. was in Technion-Israel Institute of Technology, as a postdoc.} U.D. acknowledges the support of the Israel Science Foundation \!(grant $637/19$) founded by the
Israel Academy of Sciences and Humanities. U.D. was also partially supported by a fellowship from the Lady Davis Foundation. {R.K.} wants to thank the support of the CSIR fellowship, File No. 09/1125(0016)/2020--EMR--I, for his Ph.D. work. A.S. was supported by the DST-INSPIRE grant DST/INSPIRE/04/2018/002208.

\bibliographystyle{plain}
\bibliography{ref}


\noi {\bf	 Ujjal Das }\\   School of Mathematical Sciences, National Institute of Science Education and Research (NISER), Bhubaneswar,\\ 752050 Odisha, India. \\
	 {\it Email}: udas@niser.ac.in, getujjaldas@gmail.com\\
\noi{\bf    Rohit Kumar}\\ Department of Mathematics, Indian Institute of Technology Jodhpur\\ Rajasthan 342030, India.\\ 
   {\it Email: rohit1.iitj@gmail.com, kumar.174@iitj.ac.in}\\
	\noi{\bf   Abhishek Sarkar}\\ Department of Mathematics, Indian Institute of Technology Jodhpur\\ Rajasthan 342030, India.\\  
 {\it    Email: abhisheks@iitj.ac.in}
		
\end{document}